 \date{\today}
\numberwithin{equation}{section}
\newcommand{\dv}{\mathrm{div}\,}
\newtheorem{Theorem}{Theorem}[section]
\newtheorem{Lemma}{Lemma}[section]
\newtheorem{Proposition}{Proposition}[section]
\theoremstyle{definition}
\newtheorem{Remark}{Remark}[section]
\begin{document}

\title[strong solutions to the fluid-particle  flows] 
 {Global well-posedness and optimal large-time behavior of strong solutions to the non-isentropic particle-fluid  flows}

  \author[Y.-M. Mu ]{Yanmin Mu}
\address{School of Applied Mathematics, Nanjing University of Finance \& Economics, Nanjing
 210046,  China; and
 School of Mathematical Sciences and Mathematical Institute, Nanjing Normal University, Nanjing 210023,
   China}
 \email{yminmu@126.com.}

 \author[D. Wang]{Dehua Wang}
\address{Department of Mathematics,   University of Pittsburgh,  Pittsburgh, PA 15260, USA.}
 \email{dwang@math.pitt.edu.}

\begin{abstract}
In this paper, we study the three-dimensional non-isentropic compressible fluid-particle  flows.
The system    involves coupling between the Vlasov-Fokker-Planck equation and
  the non-isentropic compressible Navier-Stokes equations  through momentum and energy exchanges.
 For  the initial data near the given equilibrium
 we  prove the global well-posedness of strong
 solutions and   obtain the optimal  algebraic rate of convergence
  in the three-dimensional whole space.
For the  periodic domain the same global well-posedness result still holds
  while the convergence rate is  exponential.
New ideas and techniques are developed  to establish the well-posedness  and large-time behavior.
For the global well-posedness our methods  are based on the new macro-micro decomposition and fine energy estimates,
while the proofs  of the optimal large-time behavior rely on   the Fourier analysis of the linearized Cauchy problem
 and the energy-spectrum method.
 \end{abstract}

\keywords{Fluid-particle flows, non-isentropic Navier-Stokes equations, Vlasov-Fokker-Planck equation,
 global well-posedness, rate of convergence}
\subjclass[2010]{35Q30, 76D03, 76D05, 76D07}

\date{\today}

\maketitle


\section{Introduction}


%

In this paper we study the global well-posedness and large time behavior of strong solutions for the three-dimensional fluid-particle flows, governed by the following  Navier-Stokes equations of compressible non-isentropic fluids coupled with the Vlasov-Fokker-Planck equation of particles \cite{BBFG,CGL,M}:
\begin{align}
&\partial_t n +\nabla\cdot(n u)=0,\label{v1.2}\\
&\partial_t(n u)+\nabla\cdot(n u\otimes u)-\mu\Delta u+\nabla p=\mathcal{M} ,   \label{v1.3}\\
&\partial_t(n E)+\nabla\cdot\big((nE+p)u\big)-\kappa\Delta \tilde{\theta} = \mathcal{F}, \label{vx1.3}\\
&\partial_t F+v\cdot\nabla_{x}F=L_{u,\tilde{\theta}}F,\label{v1.1}
\end{align}
where, $n=n(t,x)\geq 0, u=u(t,x)\in \mathbb{R}^{3}, p=p(t,x)\geq 0, E=E(t,x)\geq 0, \tilde{\theta}=\tilde{\theta}(t,x)\geq 0$
for $(t,x)\in \mathbb{R}^+\times \Omega$ denote  the  density,
  velocity,   pressure,  total energy, and   temperature of the fluids,  respectively;
 $F=F(t,x,v)\geq 0$ for $(t,x,v)\in \mathbb{R}^+\times \Omega\times \mathbb{R}^{3}$
denotes the density distribution function of particles in the phase space;    the spatial domain is
 $\Omega=\mathbb{R}^{3}$ or $\mathbb{T}^{3}$ (a periodic domain in $\mathbb{R}^3$);
  and $\mu, \kappa$ are the viscosity and heat conductivity constants.
 The total energy $E$,   internal energy $e$,   pressure $p$, and   temperature $\tilde{\theta}$
satisfy the following relations:
$ 
E=e+\frac{1}{2}|u|^{2},\,\,\, p=Rn\tilde{\theta},\,\,\,e=\frac{p}{(\gamma-1)n}, 
$ 
 where $\gamma> 1$ is the adiabatic constant and $R>0$ is constant. 
 The Fokker-Planck operator is defined by
 $$L_{u,\tilde{\theta}}F=\dv_{v}\Big((v-u)F+\tilde{\theta}\nabla_{v}F\Big),$$
  which accounts for the friction force  exerting on the particles by the surrounding fluids and the
 Brownian motion of the particles, that is,
the friction force is assumed proportional to the relative velocity $v-u$,
and the Brownian motion depending on the temperature of the fluid induces diffusion with respect to the velocity variable.
The two-phase  flows have a disperse phase from a statistical viewpoint for    particles,
and a dense phase   from continuum mechanics  for fluids.
The coupling terms $\mathcal{M}, \mathcal{F}$ depict the interaction between the disperse phase and the dense phase, and  read
\begin{align}
&\mathcal{M}=-\int_{\mathbb{R}^{3}}v L_{u,\tilde{\theta}}F\,  {\rm d}v=\int_{\mathbb{R}^{3}}(v-u)F\,  {\rm d}v,\nonumber\\
&\mathcal{F}=-\int_{\mathbb{R}^{3}}\frac{|v|^{2}}{2} L_{u,\tilde{\theta}}F\,  {\rm d}v=\int_{\mathbb{R}^{3}}[v\cdot(v-u)-3\tilde{\theta}]F\,  {\rm d}v, \nonumber
\end{align}
indicating the momentum exchanges   and the energy exchanges, respectively.
%
%
%
%
 For smooth solutions of the compressible non-isentropic Navier-Stokes-Vlasov-Fokker-Planck system \eqref{v1.2}-\eqref{v1.1}, the temperature
 $\tilde{\theta}$ satisfies the following equation
 \begin{align}
 \partial_{t}\tilde{\theta}+n\cdot\nabla \tilde{\theta}+(\gamma-1)\tilde{\theta}\dv u &-\frac{\kappa(\gamma-1)}{n}\Delta \tilde{\theta}
 +\frac{\mu}{n}\Delta u\cdot u \nonumber \\
 &=\frac{\gamma-1}{n}\int_{\mathbb{R}^3}[|v-u|^{2}-3\tilde{\theta}]F\,{\rm d}v. \label{vx1.33}
\end{align}
The fluid-particle flows have a wide range of applications from   dynamics of sprays, combustion, pollution processes, waste water treatment, to biomedical flows; see \cite{CBG,BBFG,BBJM,BBT,BWC,CGL,FFS,M,RM1,RM2,FAW,FAWa} and the references therein for the discussions of applications and modeling issues.
We remark that in the momentum equation \eqref{v1.3}, we only keep the shear viscosity and  skip the bulk viscosity term for the sake of simplicity of presentations,  since the bulk viscosity will not add significant difficulty and we shall focus on the complexity caused by the heat conductivity and the Fokker-Planck operator.

 The purpose of  this paper is to establish the well-posedness of the system  \eqref{v1.2}-\eqref{v1.1}  near  a global Maxwellian.
 Without loss of generality, we  normalize the  global Maxwellian as
$$M=M(v)=\frac{1}{(2\pi)^{3/2}}\exp\Big\{-\frac{|v|^{2}}{2}\Big\}.$$
For the Cauchy problem with the initial data
\begin{equation}
(F,n,u,\tilde{\theta})|_{t=0}=(F_0(x),n_0(x),u_0(x),\tilde{\theta}_0(x)),
\end{equation}
we shall prove the global existence and uniqueness as well as the large-time behavior of the strong solution for the unknowns $(F,n,u,\tilde{\theta})$ near the global equilibrium
  state 
$(F,n,u,\tilde{\theta})\equiv( M,1,0,1) $.



 There exist  many  different systems describing the kinetic-fluid for the   physical regimes under consideration,
such as the compressible or incompressible fluids, viscous or inviscid fluids, with or without thermal diffusion acting on the
particles  and so on. The mathematical analysis of such mathematical models is very difficult due to the nonlinear coupling of partial differential equations of different types.

We now give a brief review of  works in literature on some kinetic-fluid models related to our system \eqref{v1.2}-\eqref{v1.1}.
%
%
The global existence of classical and weak solutions for the incompressible fluid-particle flows have been studied in many papers, see  \cite{KH,BDGM,Yu,GHMZ,CDM,CKL2,BDM,GJV1,GJV2,LLZ,LM,MA,WY} and their references.
For the compressible fluid, when the drag force exerted by the surrounding fluid is proportional to the relative velocity $v-u$,
the global   weak solution  and the asymptotic analysis were obtained in Mellet and Vasseur \cite{MV1,MV2},
the global classical solutions near an equilibrium  and   exponential decay were obtained in Chae, Kang, and Lee \cite{CKL},
 and the dissipative quantities, equilibria and their stability were studied in  Carrillo and Goudon \cite{CG}.
When the drag force  depends on both the relative velocity $v-u$ and the density  of the fluid,
the local existence of  classical solutions to the Euler-Vlasov system was obtained in Baranger and  Desvillettes \cite{BD},
 and global  strong solution near  an equilibrium and large-time  behavior  to the  isentropic compressible Navier-Stokes-Vlasov-Fokker-Planck system
 were established in Li, Mu and Wang \cite{LMW}.

  To the best of our knowledge, there is few rigorous mathematical results concerning the case of non-isentropic kinetic-fluid equations.
  In\cite{BBF,TSB},    some numerical analysis on the kinetic-fluid models  with energy exchange involved was presented.
 In this paper, we shall address two problems for the non-isentropic system  \eqref{v1.2}-\eqref{v1.1}: (1) the global existence of   strong solution in the framework of small
perturbation of an equilibrium,  (2)  the asymptotic behavior to the given Maxwellian equilibrium.
As far as we know, this  is   the first rigorous mathematical  work which deals with the energy exchange between the disperse phase and the dense phase.

The perturbation of solutions to the Navier-Stokes-Vlasov-Fokker-Planck system \eqref{v1.2}-\eqref{vx1.33} near the global equilibrium
  state $(F,n,u,\tilde{\theta})\equiv( M,1,0,1)$ satisfies the system \eqref{v1.5}-\eqref{v1.8}.
To prove  the global existence of strong solution to the problem \eqref{v1.5}-\eqref{v1.8}, we mainly use the fine energy estimates
 together with the local existence of strong solutions and  continuum argument.
 It is well known that under the  condition of a smallness condition on the perturbation,
 using fine energy estimates will lead to the global existence of strong solutions.  This approach is in the spirit of the  papers \cite{YG2,YG3,MN,MN1} for the Boltzmann, Landau and Navier-Stokes equations.

The fact that the unknowns of the Navier-Stokes-Vlasov-Fokker-Planck system do not depend on the same set of variables yields many technical difficulties, and the proof requires sharp estimates to the kinetic equation and the fluid equations.
We shall adopt the techniques in the works of   Guo \cite{YG,YG1,YG2,YG3} where the full coercivity of the linearized collision operator of the Boltzmann equation  are crucial to obtain the global classical solution of the nonlinear kinetic equations near an equilibrium.
In brief,  its solution to the Boltzmann equation is decomposed  into macro and micro components,
 the dissipative effect through the microscopic H-theorem can be obtained for the microscopic component, which is  important in order to use the energy method.  The macro-micro decomposition is gained with the help of spectral structure  of the  linearized collision operator of the Boltzmann equation.
Here, the  linear Fokker-Planck operator $\mathcal{L}$ and the  collision operator of the Boltzmann equation  have   certain features in common, thus
we can use   the idea of  the macro-micro decomposition of $f$ depending on the spectral structure of the operator.
However,  our Navier-Stokes-Vlasov-Fokker-Planck system  is quite different from the pure Boltzmann equation or  the coupled Boltzmann
equations, therefore several new difficulties arise as described below.

For the Boltzmann equation, as  mentioned in \cite{LLZ}, the collision particles
have the same mass and momentum as well as kinetic energy.  As we mentioned earlier, there exists momentum and energy exchange between particles and the surrounding fluids. As a result, some linear terms appear in the kinetic equation  and  some coupling terms   appear in the
fluid equations, which  leads to some new  difficulties.
Because we want to establish the global well-posesdness in the case of small  perturbation near  an equilibrium, naturally these terms may be very small,
but throughout our analysis, the main difficulties arise from these linear terms which are worse than the nonlinear terms.
In order to handle these linear terms, we need  certain dissipation effect  and expect it   from the linear Fokker-Planck operator.
 Unfortunately,  as in  Section \ref{mm}  from the  macro-micro decomposition only by  the null space of $\mathcal{L}$, we know that the dissipation induced by  the linear Focker-Planck operator  is only partial, which is not enough to control new linear terms, we eventually find a new  macro-micro decomposition, and achieve   the desired dissipative effect  under the new decomposition.
Achieving such control is one of the main new contributions of the present paper.

 Due to the interaction between the particles and the fluids,  no existing results  on the Vlasov-Fokker-Planck system yields the regularity of $f$. Therefore,  the new mixed estimates involving  the derivatives of the particle velocity $v$  variable are necessary.

 To obtain the uniform estimates, the  macroscopic equations of the particles,  i.e., \eqref{v2.13}-\eqref{vx2.15},  are important  to achieve  that the macroscopic part is bounded by its microscopic part due to \eqref{v2.1}.
The macroscopic equations behave like   elliptic so that it is straightforward to estimate $L^{2}$ norms of their derivatives for $a,b,\omega$.
In periodic domain, $L^{2}$ norms of  $a,b,\omega$ can be estimated by the Poincar\'e inequality.
This leads to  different decay rates in the whole space and the periodic domain as time tends to infinity.

 In order to achieve  the optimal decay rate in the whole space, the main ideas are based on  the Fourier analysis to the linearized Cauchy problem of \eqref{v1.5}-\eqref{vx1.7}
 and the energy-spectrum method as in  \cite{CDM,CKL,DUYZ}.
Our main difficulties arise  from the strong coupling  terms  in the system \eqref{v1.5}-\eqref{vx1.7}, namely the nonlinear terms,
that is because  the Fourier transform of the product of functions is a convolution,  which is difficult in our global time-decay analysis.
Selecting subtly the functions $G,\varphi$  in \eqref{vx3.1} as the nonhomogeneous source  plays an important role to overcome this difficulty.

With the above new ideas and techniques, we shall be able to establish the global existence of strong solutions and optimal decay rates for the compressible non-isentropic Navier-Stokes-Vlasov-Fokker-Planck both in the whole space and in periodic domains.
We remark that the methods introduced in the present work may be applied  for other related non-sentropic kinetic-fluid models such as the drag force exerted by the fluid depending on the density of the fluid.

We organize the rest of the paper as follows.
In Section 2, we reformulate the system \eqref{v1.2}-\eqref{vx1.33} near the global equilibrium
  state, present the coercivity estimate of the linear part and the macro-micro decomposition, and state our main results.
 In Section 3, we first derive the  uniform-in-time a priori estimates
and  then establish the existence  of  global strong solution.
In Section 4, we prove the rate of convergence of solutions.
In Section 5, we adapt our proof to the periodic domain  case.

\bigskip

\section{Preliminaries and main results}

In this section, we reformulate the system \eqref{v1.2}-\eqref{vx1.33} near the global equilibrium
  state, present the coercivity estimate of the linear part and the macro-micro decomposition, and state our main results.

\subsection{Reformulation}
We consider the solution $(F,n,u,\tilde{\theta})$ of the system \eqref{v1.2}-\eqref{vx1.33} near the global equilibrium $(M,1,0,1)$, i.e.,
 $$F=M+\sqrt{M}f,\quad n=1+\rho,\quad u= u,\quad \tilde{\theta}=1+\theta.$$
We shall take the constants $\mu, \kappa, R$ to be one in this paper since their values do not play a role in the analysis.
From the sysem \eqref{v1.2}-\eqref{vx1.33}, the perturbations $(f,\rho,u,\theta)$ satisfy the following equations:
\begin{align}
&\partial_{t}f+v\cdot \nabla_{x}f +u\cdot \nabla_{v}f-\frac{1}{2}u\cdot vf-u\cdot v\sqrt{M}-(|v|^{2}-3)\sqrt{M}\theta\nonumber\\
 &\qquad \qquad=\mathcal{L}f +\theta M^{-\frac{1}{2}}\Delta_{v}(\sqrt{M}f), \label{v1.5}\\
 &\partial_{t}\rho+u\cdot \nabla \rho+(1+\rho)\dv u=0,\label{v1.6}\\
 &\partial_{t}u+u\cdot \nabla u+\frac{1+\theta}{1+\rho}\nabla\rho+\nabla_{x}\theta=\frac{1}{1+\rho}\big(\Delta u-u(1+a)+b\big),\label{v1.7}\\
 &\partial_{t}\theta+u\cdot\nabla \theta+\theta\dv u+\dv u-\sqrt{6}\omega+3\theta\nonumber\\
 &\qquad\qquad=\frac{1}{1+\rho}\big(\Delta\theta+|u|^{2}-2u\cdot b+a|u|^{2}-3a\theta\big)
- \frac{\rho}{1+\rho}\big(\sqrt{6}\omega-3\theta\big).\label{vx1.7}
\end{align}
Correspondingly, the  initial data  becomes
\begin{align}
(f,\rho,u,\theta)|_{t=0}&=(f_0(x,v), \rho_0(x),u_{0}(x),\theta_{0}(x))\nonumber\\
&= \Big(\frac{F_{0}-M}{\sqrt{M}}, n_0(x)-1,u_{0}(x),\tilde{\theta}_{0}(x)-1\Big),\label{v1.8}
\end{align}
where
$F_{0}=F(0,t,x),n_{0}=n(0,t,x),u_{0}=u(0,t,x),\tilde{\theta}_{0}=\tilde{\theta}(0,t,x)$ is a small perturbation near the above equilibrium.

In \eqref{v1.5}-\eqref{vx1.7}, we denote  the linearized Fokker-Planck operator  $\mathcal{L}$ by
$$\mathcal{L}f =\frac{1}{\sqrt{M}}\nabla_{v}\cdot\Big[M\nabla_{v}\big(\frac{f}{\sqrt{M}}\big)\Big],$$
and $a=a^{f},b=b^{f},\omega=\omega^{f}$ are defined by
\begin{align}
&a^{f}(t,x)=\int_{\mathbb{R}^{3}}\sqrt{M}f(t,x,v) \,{\rm d}v,\nonumber\\
& b^{f}(t,x)=\int_{\mathbb{R}^{3}}v\sqrt{M}f(t,x,v) \,{\rm d}v,\nonumber\\
&\omega^{f}(t,x)=\int_{\mathbb{R}^{3}}\frac{|v|^{2}-3}{\sqrt{6}}\sqrt{M}f(t,x,v) \,{\rm d}v. \nonumber
\end{align}

\smallskip
\subsection{Notations}

For $\nu(v)=1+|v|^{2}$,    $|\cdot|_{\nu}$ is the norm defined by
$$|g|_{\nu}^{2}:=\int_{\mathbb{R}^{3}}\Big\{|\nabla_{v}g(v)|^{2}+\nu(v)|g(v)|^{2}\Big\} \,{\rm d}v,\quad g=g(v).$$
 $\langle \cdot,\cdot\rangle$ is  the inner product of the space $L^{2}_{v}$, namely,
$$\langle g,h\rangle   :=\int_{\mathbb{R}^{3}}g(v)h(v) \,{\rm d}v,\qquad g,h\in L^{2}_{v}.$$
In case of no confusion,  we denote by $\|\cdot\|$   the norm of  $L^{2}_{x}$ or $L^{2}_{x,v}$ for simplicity.
Define
$$\|g\|_{\nu}^{2}:=\iint_{\Omega\times\mathbb{R}^{3}}\Big\{|\nabla_{v}g(x,v)|^{2}+\nu(v)|g(x,v)|^{2}\Big\}\, {\rm d}x  {\rm d}v, \quad g=g(x,v).$$

For $q\geq 1$, we also denote
$$Z_{q}=L^{2}_{v}(L^{q}_{x})=L^{2}(\mathbb{R}^{3}_{v};L^{q}(\mathbb{R}^{3}_{x})),\quad
\|g\|^{2}_{Z_{q}}=\int_{\mathbb{R}^{3}}\Big(\int_{\mathbb{R}^{3}}|g(x,v)|^{q}\, {\rm d}x\Big)^{\frac{2}{q}}\,{\rm d}v.$$
The norm  $\|(\cdot,\cdot,\cdot,\cdot)\|_{\mathcal{Z}_{q}}$ is defined by
 $$\|(f,\rho,u,\theta)\|_{\mathcal{Z}_{q}}=\|f\|_{Z_{q}}+\|(\rho,u,\theta)\|_{L^{1}},$$
 for $f=f(x,v),\big(\rho,u,\theta\big)=\big(\rho(x),u(x),\theta(x)\big)$ and $q\geq 1$.

For an integrable function $g:\,\mathbb{R}^{3}\longrightarrow \mathbb{R}$, its Fourier transform $\hat{g}=\mathcal{F}g$ is defined by
$$\hat{g}(\xi)=\mathcal{F}g(\xi)=\int_{\mathbb{R}^{3}}e^{-ix\cdot\xi}g(x)\, {\rm d}x ,\quad x\cdot\xi=\sum_{j=1}^{3}x_{j}\xi_{j},$$
for $\xi\in \mathbb{R}^{3}$

 For multi-indices $\alpha=(\alpha_{1},\alpha_{2},\alpha_{3})$ and $\beta=(\beta_{1},\beta_{2},\beta_{3})$, we denote by
$$\partial^{\alpha}_{\beta}\equiv \partial^{\alpha_{1}}_{x_{1}} \partial^{\alpha_{2}}_{x_{2}} \partial^{\alpha_{3}}_{x_{3}}
 \partial^{\beta_{1}}_{v_{1}}\partial^{\beta_{2}}_{v_{2}}\partial^{\beta_{3}}_{v_{3}}$$
 the partial derivatives with respect to $x=(x_1,x_2,x_3)$ and $v=(v_1,v_2,v_3)$.
 The length of $\alpha$ and $\beta$ are defined as $|\alpha|=\alpha_{1}+\alpha_{2}+\alpha_{3}$ and $ |\beta|=\beta_{1}+\beta_{2}+\beta_{3}$.
 We shall use the following norms:
 \begin{align}
    \|g\|_{H^s}:=\sum_{|\alpha|\leq s}\|\partial^\alpha g\|, \quad
    \|g\|_{H^s_{x,v}}:=\sum_{|\alpha|+|\beta|\leq s}\|\partial^\alpha_\beta g\|. \nonumber
 \end{align}

We shall use the letter $C$ to denote  a generic positive (generally large) constant,
 $\lambda$ a generic positive (generally small) constant;
 and use the  symbol $A\sim B$ to denote the relation
 $\frac{1}{C} A\leq B\leq C A$ for some constant $C>0$.

\subsection{Coercivity estimate of the linear part in \eqref{v1.5}}
In this subsection, we first apply the similar coercivity estimate of the linear collision operator of Boltzmann equation to the linearized Fokker-Planck operator $\mathcal{L}$
by spectrum analysis, that is, \eqref{Co} holds. Then, we find that the dissipative effect of $\mathcal{L}$ is partial and new difficulties have arisen.

In \eqref{v1.5}, we denote  the important linear part by
$$Lf=\mathcal{L}f+A=\mathcal{L}f+u\cdot v\sqrt{M}+(|v|^{2}-3)\sqrt{M}\theta.$$

It is well-known from \cite{A} that the classical linearized Fokker-Planck operator $\mathcal{L}$ enjoys the following dissipative properties:
\begin{itemize}
\item[(1)]The null space of $\mathcal{L}$ is the one dimensional space
                   $$\mathcal{N}_{0}= \text{Span}\Big\{\sqrt{M}\Big\}.$$
\item[(2)]
Define the projection in $L^{2}_{x,v}$ to the null space $\mathcal{N}_{0}$ by
$$\mathbf{P}_{0}f:=a^{f}\sqrt{M},\quad a^{f}(t,x)=\int_{\mathbb{R}^{3}}\sqrt{M}f(t,x,v) \,{\rm d}v.$$
Using the integration by parts,  we have
$$-\int_{\mathbb{R}^{3}}f\mathcal{L}f\, {\rm d}v=\int_{\mathbb{R}^{3}} \Big|\nabla_{v}(\mathbf{I}-\mathbf{P}_{0})f+\frac{v}{2}(\mathbf{I}-\mathbf{P}_{0})f\Big|^{2}\,{\rm d}v.$$
\item[(3)] There exists a constant  $\lambda_{0}>0$, such that the following coercivity estimate holds:
\begin{align}
-\int_{\mathbb{R}^{3}}f\mathcal{L}f\, {\rm d}v\geq \lambda_{0}|\{\mathbf{I}-\mathbf{P}_{0}\}f|^{2}_{\nu} , \quad \forall\, f=f(v).\label{Co}
\end{align}
 \end{itemize}

We hope  that the linear part $L$ also has  a similar coercivity estimate. However, we notice that it is straightforward to make estimates on $L-\mathcal{L}$ as
$$\int_{\mathbb{R}^{3}}\int_{\mathbb{R}^{3}}\big(u\cdot v\sqrt{M}+(|v|^{2}-3)\sqrt{M}\theta\big)f \,{\rm d}x{\rm d}v\leq C \big(\|u\|+\|\theta\|\big)\|\{\mathbf{I}-\mathbf{P}_{0}\}f\|,$$
and from \eqref{Co}, one gets
$$-\int_{\mathbb{R}^{3}}\int_{\mathbb{R}^{3}}f\mathcal{L}f\, {\rm d}v{\rm d}x\geq \lambda_{0}\|\{\mathbf{I}-\mathbf{P}_{0}\}f\|^{2}_{\nu}.$$
 It is difficult to decide which of the two terms on the right hand side of the above inequalities is bigger. To this end, we decompose
$f$ into the macroscopic component $\mathbf{P}_{0}f$ and the microscopic component$\{\mathbf{I}-\mathbf{P}_{0}\}f$, however, the dissipative effect for $\{\mathbf{I}-\mathbf{P}_{0}\}f$, i.e.,  \eqref{Co}, is not enough to deal with the linear part $L$.
  It is nontrivial to get
a coercivity estimate on the linear part $L$. In order to control $L$, we must extract part of
dissipation of $\mathcal{L}$ corresponding to the momentum component and the energy component respectively.

\subsection{New Macro-micro decomposition}  \label{mm}
In this subsection, we are concerned with the macro-micro decomposition of the solution into its macroscopic (fluid dynamic) and microscopic (kinetic) components.
In view of the difficulties mentioned above, we want to extract a better dissipative effect, which is hard and achieved  based on the inspiration from the idea of spectrum analysis of the linear operator.
In short, since the coupling terms in our system involve the momentum exchange   and the energy exchange,
we expand the original null space $\mathcal{N}_{0}$ to the new space  $\mathcal{N}$  below. Then,  by the macroscopic and  microscopic projection on the new space, we can get the new macro-micro decomposition of the solution,  as described  below.

Denote the linear space $\mathcal{N}$ by
$$\mathcal{N}=\text{Span} \Big\{\sqrt{M},v_{1}\sqrt{M},v_{2}\sqrt{M},v_{3}\sqrt{M},|v|^{2}\sqrt{M}\Big\},$$
and it has  the  following set of  orthogonal basis
\begin{equation}
\left\{\begin{aligned}
& \chi_{0}=\sqrt{M},\qquad \chi_{i}=v_{i}\sqrt{M},\quad i=1,2,3,\nonumber\\
& \chi_{4}=\frac{|v|^{2}-3}{\sqrt{6}} \sqrt{M}.\nonumber
\end{aligned}  \right.
\end{equation}
Define the projector operator $\mathbf{P}$ by
\begin{align}
 \mathbf{P}: \  L^{2}\rightarrow \mathcal{N},\quad  f\longmapsto  \mathbf{P}f=\Big\{a^{f}+b^{f}\cdot v+\omega^{f}\frac{|v|^{2}-3}{\sqrt{6}}\Big\}\sqrt{M}.\nonumber
\end{align}
We also introduce the projector $\mathbf{P}_{1},\mathbf{P}_{2}$ respectively by
\begin{align}
&\mathbf{P}_{1}f:=b^{f}\cdot v \sqrt{M},\nonumber\\
&\mathbf{P}_{2}f:=\omega^{f}\frac{|v|^{2}-3}{\sqrt{6}} \sqrt{M},\nonumber
\end{align}
so the projector $\mathbf{P}$ can be also written as
$$\mathbf{P}:=\mathbf{P}_{0}\oplus\mathbf{P}_{1}\oplus\mathbf{P}_{2}.$$

As usual, for fixed $(t, x), f(t, x, v)$ can be uniquely
decomposed as
\begin{equation}\label{decomp}
\left\{\begin{aligned}
 &f=\mathbf{P}f+\{\mathbf{I}-\mathbf{P}\}f,\\
 &\mathbf{P}f=\Big\{a^{f}+b^{f}\cdot v+\omega^{f}\frac{|v|^{2}-3}{\sqrt{6}}\Big\}\sqrt{M},
\end{aligned}  \right.
\end{equation}
where $\mathbf{P}f$ is called the macroscopic component of $f$, while $\{\mathbf{I}-\mathbf{P}\}f $ is called the corresponding
microscopic component. Interestingly,  our new  decomposition is formally the same as  that of the  linearized collision operator of the  Boltzmann equation \cite{UY}, but our decomposition here comes not only from the spectral analysis of the  linearized Fokker-Planck operator $\mathcal{L}$, but also from the coupling term involving  momentum   and  energy exchanges. Therefore, we further reveal the internal relations and differences between the particle-fluid system and the Boltzmann equation or coupled Boltzmann equation.

According to  new decomposition \eqref{decomp}, the  linearized Fokker-Planck operator $\mathcal{L}$ satisfies the following additional properties besides the above properties (1)-(3):
\begin{itemize}
\item[(4)]$\mathcal{L}f $ can be written as
$$\mathcal{L}f =\mathcal{L}\{\mathbf{I}-\mathbf{P}\}f+\mathcal{L}\mathbf{P}f=\mathcal{L}\{\mathbf{I}-\mathbf{P}\}f-\mathbf{P}_{1}f-2\mathbf{P}_{2}f.$$

\item[(5)]
 There exists a constant  $\lambda>0$, such that
\begin{align}
&\langle -\mathcal{L}\{\mathbf{I}-\mathbf{P}\}f, g\rangle  \geq \lambda|\{\mathbf{I}-\mathbf{P}\}f|^{2}_{\nu},\nonumber\\
&\langle -\mathcal{L}f, f \rangle    \geq \lambda|\{\mathbf{I}-\mathbf{P}\}f|^{2}_{\nu}+|b^{f}|^{2}+2|\omega^{f}|^{2}. \label{v2.1}
\end{align}
 \end{itemize}



\subsection{Main results}
We   now   state our main results. The first result is the global existence of
classical solutions with small initial data and  optimal  algebraic rate of decay in the whole space.
 \begin{Theorem}\label{vt1.1}
 Let $\Omega=\mathbb{R}^{3}$ and $(f_0,\rho_0,u_0,\theta_{0})$ be the initial data such that
$F_{0}=M+\sqrt{M}f_{0}\geq 0,$ and there exists $\varepsilon_{0}>0$,
 $\|f_{0}\|_{H^{4}_{x,v}}+\|(\rho_{0},u_{0},\theta_{0})\|_{H^{4}}<\varepsilon_{0}$.
 Then the Cauchy problem \eqref{v1.5}-\eqref{v1.8} admits a unique global solution
 $(f,\rho,u,\theta)$ satisfying  $F=M+\sqrt{M}f\geq 0$ and
 \begin{gather*}
f\in C\big([0,\infty);H^{4}(\mathbb{R}^{3}\times\mathbb{R}^{3})\big)^{3};
\quad (\rho, \, u,\,\theta)\in C\big([0,\infty);H^{4}(\mathbb{R}^{3})\big); \label{v1.9} \\
 \sup_{t\geq 0}\big(\|f(t)\|_{H^{4}_{x,v}}+\|(\rho,u,\theta)(t)\|_{H^{4}}\big)\leq C\big(\|f_{0}\|_{H^{4}_{x,v}}+\|\rho_{0},u_{0},\theta_{0}\|_{H^{4}}\big),\label{v1.10}
\end{gather*}
for some constant $C>0$.
Moreover, if we further assume that
\begin{align}\label{intt}
  \|(f_{0},\rho_{0},u_{0},\theta_{0})\|_{\mathcal{Z}_{1}\cap H^{4}}\leq\varepsilon_{0}
\end{align}then
\begin{align}
\|f(t)\|_{H^{4}_{x,v}}+\|(\rho,u,\theta)(t)\|_{H^{4}}\leq C (1+t)^{-\frac{3}{4}}\|(f_{0},\rho_{0},u_{0},\theta_{0})\|_{\mathcal{Z}_{1}\cap H^{4}},\label{v1.11}
\end{align}
for some constant $C>0$ and all $t\geq 0.$
 \end{Theorem}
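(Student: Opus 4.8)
The plan is to prove the three assertions in order: global existence and uniqueness, the uniform-in-time bound \eqref{v1.9}--\eqref{v1.10}, and the algebraic decay \eqref{v1.11}. The first two will follow by combining a local existence theorem with uniform a priori estimates in a continuation (bootstrap) argument; the third will follow from Fourier analysis of the linearized Cauchy problem of \eqref{v1.5}--\eqref{vx1.7} together with the energy-spectrum method.

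\textbf{Step 1 (local existence and the a priori functional).} First I would establish local-in-time existence and uniqueness of a strong solution $(f,\rho,u,\theta)\in C([0,T];H^4)$ with $F=M+\sqrt M f\ge 0$, by a standard linearized-iteration scheme together with the parabolic structure of the Fokker--Planck equation \eqref{v1.5}. For the continuation, introduce an instantaneous energy $\mathcal E(t)\sim\|f(t)\|_{H^4_{x,v}}^2+\|(\rho,u,\theta)(t)\|_{H^4}^2$ and a dissipation rate $\mathcal D(t)$ controlling: (i) the microscopic part $\|\{\mathbf I-\mathbf P\}f\|_\nu^2$ and its $x$- and $v$-derivatives, via the coercivity \eqref{v2.1}; (ii) the momentum and energy moments $|b^f|^2+|\omega^f|^2$ and their derivatives, which also come out of \eqref{v2.1} thanks to the new decomposition \eqref{decomp}; (iii) the fluid dissipation $\|\nabla u\|_{H^3}^2+\|\nabla\theta\|_{H^3}^2+\|u\|^2+\|\theta\|^2$; and (iv) the density dissipation $\|\nabla\rho\|_{H^3}^2$, which is absent from the parabolic structure and must be recovered by an elliptic-type manipulation of the momentum equation \eqref{v1.7} (pairing $\nabla\partial^\alpha\rho$ with $\partial^\alpha\partial_t u$ and integrating by parts in $t$ and $x$).

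\textbf{Step 2 (closing the energy inequality).} Apply $\partial^\alpha_\beta$ for $|\alpha|+|\beta|\le4$ to \eqref{v1.5} and $\partial^\alpha$ for $|\alpha|\le4$ to \eqref{v1.6}--\eqref{vx1.7}, take the natural $L^2_{x,v}$ and $L^2_x$ inner products, and sum. The decisive point is the linear part $Lf=\mathcal L f+u\cdot v\sqrt M+(|v|^2-3)\sqrt M\theta$: using properties (4)--(5) of $\mathcal L$, the quadratic form $-\langle\mathcal L f,f\rangle$ produces dissipation not only of $\{\mathbf I-\mathbf P\}f$ but also of $b^f$ and $\omega^f$, and completing the square against the fluid unknowns $u,\theta$ in \eqref{v1.7}--\eqref{vx1.7} absorbs the dangerous linear source terms $u\cdot v\sqrt M$, $(|v|^2-3)\sqrt M\theta$ and the coupling terms $-u(1+a)+b$, $\sqrt6\,\omega-3\theta$. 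The $v$-derivative estimates require the weighted norm $|\cdot|_\nu$ (weight $\nu=1+|v|^2$) because $\partial_\beta$ acting on the transport term $v\cdot\nabla_x f$ and on $\theta M^{-1/2}\Delta_v(\sqrt M f)$ generates extra powers of $v$; these are the ``new mixed estimates'' of the introduction. All remaining contributions are genuinely quadratic and are bounded by $C\sqrt{\mathcal E(t)}\,\mathcal D(t)$ under the bootstrap hypothesis $\sup_{[0,T]}\mathcal E\le\varepsilon_1$, using $H^2(\mathbb R^3)\hookrightarrow L^\infty$. This yields $\frac{d}{dt}\widetilde{\mathcal E}(t)+\lambda\mathcal D(t)\le C\sqrt{\mathcal E(t)}\,\mathcal D(t)$ for an equivalent functional $\widetilde{\mathcal E}(t)\sim\mathcal E(t)$; choosing $\varepsilon_0,\varepsilon_1$ small and integrating gives $\mathcal E(t)+\int_0^t\mathcal D(s)\,ds\le C\mathcal E(0)$, which closes the continuation and proves \eqref{v1.9}--\eqref{v1.10}.

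\textbf{Step 3 (algebraic decay).} Under the extra assumption \eqref{intt}, write \eqref{v1.5}--\eqref{vx1.7} as $\partial_t U=\mathbb L U+\mathcal N[U]$, where $\mathbb L$ is the linearized operator and $\mathcal N[U]$ gathers all nonlinear and higher-order coupling terms, packaged as a nonhomogeneous source $(G,\varphi)$; Duhamel's formula gives $U(t)=e^{t\mathbb L}U_0+\int_0^t e^{(t-s)\mathbb L}\mathcal N[U](s)\,ds$. Taking the Fourier transform in $x$ and analyzing the spectrum of $\widehat{\mathbb L}(\xi)$ for $|\xi|$ small — while the large-$|\xi|$ regime is absorbed by the dissipation from Step 2 (the energy-spectrum method) — gives the linear estimate $\|e^{t\mathbb L}U_0\|_{L^2}\lesssim(1+t)^{-3/4}\|U_0\|_{\mathcal Z_1\cap L^2}$, the exponent $3/4$ being the three-dimensional $L^1_x\to L^2_x$ heat rate. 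For the nonlinear source, bound $\|\mathcal N[U](s)\|_{\mathcal Z_1}$ by quadratic quantities using $\|gh\|_{L^1_x}\le\|g\|_{L^2_x}\|h\|_{L^2_x}$ and the uniform $H^4$ bound, so that $\|\mathcal N[U](s)\|_{\mathcal Z_1\cap H^3}\lesssim\mathcal E(s)$; then with $\mathcal M(t):=\sup_{0\le s\le t}(1+s)^{3/4}\|U(s)\|_{L^2}$, the Duhamel formula, the linear decay, and the time-weighted estimate $\int_0^t(1+t-s)^{-3/4}(1+s)^{-3/2}\,ds\lesssim(1+t)^{-3/4}$ give $\mathcal M(t)\le C\varepsilon_0+C\mathcal M(t)^2$, hence $\|U(t)\|_{L^2}\lesssim(1+t)^{-3/4}$ by smallness. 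Feeding this $L^2$-decay back into the energy inequality of Step 2, combined with an interpolation of the form $\widetilde{\mathcal E}\lesssim\mathcal D+\|U\|_{L^2}^2$ to run a Gronwall argument, upgrades the decay to the full $H^4$ norm, which is \eqref{v1.11}. I expect the two main obstacles to be, in Step 2, extracting from $\mathcal L$ under the new decomposition enough dissipation (in $b^f,\omega^f$) to dominate the linear coupling terms while simultaneously closing the weighted $v$-derivative estimates, and, in Step 3, the convolution control of the nonlinear source in the $\mathcal Z_1$ topology, which is precisely why $(G,\varphi)$ must be chosen carefully.
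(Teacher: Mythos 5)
Your overall blueprint matches the paper's: the new projection $\mathbf P=\mathbf P_0\oplus\mathbf P_1\oplus\mathbf P_2$, the strengthened coercivity \eqref{v2.1}, pure-$x$ and mixed energy estimates, an elliptic-type recovery of $\|\nabla\rho\|$, and for the decay the Fourier analysis of the linearized Cauchy problem with a packaged source plus the energy-spectrum bootstrap. Step~3 in particular tracks Section~4 of the paper closely. However, there is a genuine gap in the construction of the dissipation functional in Steps~1--2.

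The gap is the macroscopic dissipation of $a=a^f$. In your list (i)--(iv) the moment $a$ never appears, and it cannot: $a\sqrt M$ spans the null space of $\mathcal L$, so \eqref{v2.1} gives it nothing, and unlike $b$ and $\omega$ there is no linear coupling term in \eqref{v1.6}--\eqref{vx1.7} that would let you complete a square against a fluid field and recover it that way. Yet $\|\nabla a\|_{H^3}^2$ must sit inside $\mathcal D$, because the quadratic remainders in the basic energy identity (and in your Step~2 closure) are bounded by expressions involving $\|\nabla a\|_{H^3}$. The paper's resolution is Proposition~\ref{vl2.4}: project the kinetic equation \eqref{v1.5} onto the higher Hermite moments $\Gamma_{ij}(\cdot)=\langle(v_iv_j-1)\sqrt M,\cdot\rangle$ and $Q_i(\cdot)=\langle\tfrac1{\sqrt6}v_i(|v|^2-3)\sqrt M,\cdot\rangle$ to obtain the macroscopic system \eqref{v2.13}--\eqref{vx2.15}, which is elliptic in $(a,b,\omega)$, and introduce the cross functional $\mathcal E_0(t)$ in \eqref{v2.16} so that the time derivatives can be traded for already-dissipated quantities ($\{\mathbf I-\mathbf P\}f$ in $\nu$-norm, $b-u$, $\sqrt2\omega-\sqrt3\theta$). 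This is exactly parallel to your $\nabla\rho$ step for the momentum equation, but you only did the fluid side, not the kinetic side; without it $\mathcal D$ is incomplete and \eqref{v2.24} does not follow.

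A secondary imprecision, worth noting because it bears on the decay rate: the dissipation is not ``$|b^f|^2+|\omega^f|^2$ and $\|u\|^2+\|\theta\|^2$'' as separate entries. The linear cross terms cancel most of these and only the differences $\|b-u\|^2$ and $\|\sqrt2\omega-\sqrt3\theta\|^2$ survive (this is your ``completing the square'' remark in Step~2, which is the right mechanism --- Step~1 just mislabels its output). In particular no $L^2$-norm of $a,b,\omega,\rho,u,\theta$ appears in $\mathcal D$ in $\mathbb R^3$ (only their gradients and the two differences), and this is precisely why the whole-space decay is algebraic rather than exponential.
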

\begin{Remark}
Here the algebraic rate of decay in \eqref{v1.11} is optimal in sense that this rate coincides with that of
the corresponding  linear system.
\end{Remark}
The second result is concerned with the periodic spatial  domain. Compared with the case of the whole space,
here we know that the Poincar\'e inequality holds,
 the exponential  convergence rate is  obtained.
 \begin{Theorem}\label{vt1.2}
 Let $\Omega=\mathbb{T}^{3}$  and $(f_0,\rho_0,u_0)$ be the initial data such that
   $F_{0}=M+\sqrt{M}f_{0}\geq 0,$ there exists $\varepsilon_{0}>0$,
 $\|f_{0}\|_{H^{4}_{x,v}}+\|(\rho_{0},u_{0})\|_{H^{4}}<\varepsilon_{0}$,
  and
 \begin{align}
 &\int_{\mathbb{T}^{3}}a_{0}\, {\rm d}x =0, \quad  \int_{\mathbb{T}^{3}}\rho_{0}\, {\rm d}x =0, \nonumber\\
 &\int_{\mathbb{T}^{3}}\left(b_{0}+(1+\rho_{0})u_{0}\right)\, {\rm d}x =0, \nonumber\\
 &\int_{\mathbb{T}^{3}}(1+\rho_{0})(\theta_{0}+\frac{1}{2}|u_{0}|^{2})+\frac{\sqrt{6}}{2}\omega_{0}\, {\rm d}x =0,\nonumber
 \end{align}
 where
 \begin{align}
 a_0=\int_{\mathbb{T}^{3}}\sqrt{M}f_0(x,v) \,{\rm d}v, \;
 b_0=\int_{\mathbb{T}^{3}}v\sqrt{M}f_0(x,v) \,{\rm d}v, \;
 \omega_0=\int_{\mathbb{T}^{3}}\frac{|v|^{2}-3}{\sqrt{6}}\sqrt{M}f_0(x,v) \,{\rm d}v.\nonumber
  \end{align}
Then, the Cauchy problem \eqref{v1.5}-\eqref{v1.8} admits a unique global solution
 $(f,\rho,u)$ satisfying $ F=M+\sqrt{M}f\geq 0$ and
 \begin{gather*}
f\in C([0,\infty);H^{4}(\mathbb{T}^{3}\times\mathbb{R}^{3}));\quad (\rho, \, u,\,\theta)\in C([0,\infty);H^{4}(\mathbb{T}^{3}))^{3};\nonumber\\
\|f(t)\|_{H^{4}_{x,v}}+\|(\rho,u,\theta)(t)\|_{H^{3}}
\leq C\big(\|f_{0}\|_{H^{4}_{x,v}}+\|\rho_{0},u_{0},\theta_{0}\|_{H^{4}}\big)e^{-\lambda t}, \label{v1.12}
\end{gather*}
with $\lambda>0$ some constant, for all $t\geq 0.$
 \end{Theorem}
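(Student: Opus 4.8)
The plan is to obtain Theorem \ref{vt1.2} as a relatively direct adaptation of the whole-space argument of Theorem \ref{vt1.1}, the point being that on $\mathbb{T}^3$ the Poincar\'e inequality upgrades the partial dissipation into full dissipation of the energy norm, hence exponential decay. First I would set up the same a priori energy functional $\mathcal{E}(t)\sim\|f(t)\|_{H^4_{x,v}}^2+\|(\rho,u,\theta)(t)\|_{H^4}^2$ and the associated dissipation $\mathcal{D}(t)$ that were constructed in Section 3, and re-run the nonlinear energy estimates verbatim (nothing in those estimates uses $\Omega=\mathbb{R}^3$ rather than $\mathbb{T}^3$); combined with the local existence theorem and the continuation argument this already yields the global solution and the uniform bound, exactly as in Theorem \ref{vt1.1}. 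So the only new content is the decay rate.

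The key step is to show that on $\mathbb{T}^3$ the dissipation $\mathcal{D}(t)$ controls the full energy $\mathcal{E}(t)$, i.e. $\mathcal{E}(t)\le C\,\mathcal{D}(t)$, so that the differential inequality $\frac{d}{dt}\mathcal{E}(t)+\lambda\mathcal{D}(t)\le 0$ closes to $\frac{d}{dt}\mathcal{E}+\lambda'\mathcal{E}\le 0$ and Gronwall gives $\mathcal{E}(t)\le\mathcal{E}(0)e^{-\lambda' t}$. The coercivity \eqref{v2.1} already gives control of $\|\{\mathbf{I}-\mathbf{P}\}f\|_\nu^2$, of $\|b^f\|^2$ and $\|\omega^f\|^2$, and of $\nabla u$, $\nabla\rho$, $\nabla\theta$, $u$ (the latter via the $-u(1+a)$ drag term) together with their higher derivatives; what is missing from a genuine dissipation term is the zeroth-order piece $\|a^f\|^2+\|\rho\|^2+\|\theta\|^2$ and the pure $x$-derivatives of $a^f$. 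This is precisely where the zero-mean hypotheses on the initial data enter: I would first check that the four conservation laws satisfied by the system (conservation of total particle mass $\int a^f$, of fluid mass $\int\rho$, of total momentum $\int(b^f+(1+\rho)u)$, and of total energy $\int[(1+\rho)(\theta+\tfrac12|u|^2)+\tfrac{\sqrt6}{2}\omega^f]$) propagate the stated zero-mean conditions to all $t\ge 0$. Given those, the Poincar\'e inequality applied to the macroscopic quantities bounds $\|a^f\|$, $\|\rho\|$, $\|u\|$, $\|\theta\|$, $\|\omega^f\|$ by the norms of their gradients, which are in turn controlled using the macroscopic equations \eqref{v2.13}--\eqref{vx2.15} (elliptic in nature) by $\|\{\mathbf{I}-\mathbf{P}\}f\|_\nu$ plus already-dissipated quantities; hence $\mathcal{E}\lesssim\mathcal{D}$.

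The main obstacle I anticipate is handling the zero-mean propagation carefully for the \emph{nonlinear} conserved quantities — the momentum and energy integrals contain products like $\rho u$ and $(1+\rho)|u|^2$, so one must verify that the exact (not linearized) system integrated over $\mathbb{T}^3$ annihilates these integrals, using that all flux terms are spatial divergences and that the Fokker-Planck and coupling terms integrate against $1,v,|v|^2$ to the compatible exchange structure built into $\mathcal{M},\mathcal{F}$; this is a computation that must be done at the level of \eqref{v1.2}--\eqref{vx1.33} before passing to perturbation variables. A secondary technical point is that the Poincar\'e argument must be threaded through the energy estimate so that the smallness of $\mathcal{E}$ absorbs the extra nonlinear terms produced when one estimates $\nabla$ of the macroscopic quantities via the macroscopic equations; but since the required bounds are all of the same type already used to close the global existence estimate, this should be routine. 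Finally I would note that $\|(\rho,u,\theta)\|_{H^3}$ rather than $H^4$ appears in the stated decay because the top-order $x$-derivatives of the fluid part are only bounded, not dissipated, so the exponential rate is obtained after trading one derivative via interpolation with the uniform $H^4$ bound — the same mechanism as in the whole-space case.
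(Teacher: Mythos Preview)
Your overall strategy matches the paper's, but there is a gap in how you apply Poincar\'e. You claim Poincar\'e bounds $\|u\|$, $\|\theta\|$, $\|\omega^f\|$ by their gradients; this fails because only $a^f$ and $\rho$ have zero mean individually --- the momentum and energy constraints fix the means of the \emph{nonlinear} combinations $b+(1+\rho)u$ and $(1+\rho)(\theta+\tfrac12|u|^2)+\tfrac{\sqrt6}{2}\omega$, not of $u,\theta,\omega$ separately. Relatedly, your accounting of $\mathcal{D}_1(t)$ is off: in the coupled estimate (Proposition~\ref{vl2.2}) the coercivity \eqref{v2.1} and the drag/coupling terms do not produce $\|b\|^2$, $\|\omega\|^2$, $\|u\|^2$ separately, but only the differences $\|b-u\|^2$ and $\|\sqrt2\,\omega-\sqrt3\,\theta\|^2$.

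The paper closes this as follows: apply Poincar\'e to the zero-mean nonlinear combinations to obtain, up to quadratic errors absorbed by smallness, control of $\|b+u\|$ and $\|\tfrac{\sqrt6}{2}\omega+\theta\|$; then combine these with $\|b-u\|$ and $\|\sqrt2\,\omega-\sqrt3\,\theta\|$, already present in $\mathcal{D}_1$, to recover $\|b\|,\|u\|,\|\omega\|,\|\theta\|$ individually (two independent linear combinations determine each pair). With this step inserted your argument goes through exactly as in the paper. Your final remark about trading a derivative via interpolation is unnecessary here: the augmented dissipation $\mathcal{D}_{\mathbb{T},1}$ in fact controls the full $H^4$ norm (see \eqref{v4.6}), so exponential decay of $\mathcal{E}(t)$ itself follows directly from Gronwall.
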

%

In  the rest of this paper,  we   shall  omit the integral domain $\Omega\times \mathbb{R}^3$ or
$\mathbb{R}^3$ in the integrals for simplicity.

\bigskip

\section{Global existence of classical solutions in the whole space}
   In this section, we shall establish the global existence of classical solutions to the  problem \eqref{v1.5}-\eqref{v1.8}
   in the whole space $\mathbb{R}^3$. We first obtain the uniform a priori estimates.
   Then we construct the unique local solution by an iteration process, and obtain the
   global solution by the  continuum argument.

\subsection{A priori estimates}
In this subsection, we will show the uniform-in-time a priori estimates in the space $\Omega=\mathbb{R}^{3}$ or $\mathbb{T}^{3}$.
We  assume that 
%
$(f,\rho,u,\theta)$ is a classical solution to the Cauchy problem \eqref{v1.5}-\eqref{v1.8} for $0< t<T$ with a fixed $T>0$,
 and
\begin{align}
\sup_{0<t<T}\Big\{\|f(t)\|_{H^{4}_{x,v}}+\|(\rho,u,\theta)(t)\|_{H^{4}_{x}}\Big\}\leq\delta, \label{v2.2}
\end{align}
with $0<\delta<1 $  sufficiently small  constant.

The following lemma  in  \cite{CDM} is useful for the forthcoming estimates.
\begin{Lemma}[see \cite{CDM}]\label{vl2.1}
There exists a   constant $C>0$ such that  for any $f,g\in H^{4 }(\mathbb{R}^{3})$ and
any multi-index $\gamma$ with $1\leq |\gamma|\leq 4$,
\begin{align}
\|f\|_{L^{\infty}(\mathbb{R}^{3})}&\leq C\|\nabla_{x}f\|_{L^{2}(\mathbb{R}^{3})}^{1/2}\|\nabla^{2}_{x}f\|_{L^{2}(\mathbb{R}^{3})}^{1/2},\label{v2.3}\\
\|fg\|_{H^{2}(\mathbb{R}^{3})}&\leq C \|f\|_{H^{2}(\mathbb{R}^{3})}\|\nabla _{x}g\|_{H^{2}(\mathbb{R}^{3})},\label{v2.4}\\
\|\partial^{\gamma}_{x}(fg)\|_{L^{2}(\mathbb{R}^{3})}&\leq C \|\nabla _{x}f\|_{H^{3}(\mathbb{R}^{3})}\|\nabla _{x}g\|_{H^{3}(\mathbb{R}^{3})}.\label{v2.5}
\end{align}
\end{Lemma}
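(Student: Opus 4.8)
The three estimates \eqref{v2.3}--\eqref{v2.5} are all consequences of Gagliardo--Nirenberg--Sobolev interpolation combined with the Leibniz rule, and \eqref{v2.3} is the basic building block, so the plan is to establish it first. For \eqref{v2.3} I would use a Fourier-splitting argument: since $f\in H^{2}(\mathbb{R}^{3})$ one has $\hat f\in L^{1}$ and $\|f\|_{L^{\infty}}\le(2\pi)^{-3}\int_{\mathbb{R}^{3}}|\hat f(\xi)|\,{\rm d}\xi$, and splitting this integral over $\{|\xi|\le R\}$ and $\{|\xi|>R\}$ and applying Cauchy--Schwarz against the weights $|\xi|^{-2}$ and $|\xi|^{-4}$ (both integrable in dimension three near $0$, resp.\ near $\infty$) gives $\|f\|_{L^{\infty}}\le C\big(R^{1/2}\|\nabla_{x}f\|_{L^{2}}+R^{-1/2}\|\nabla^{2}_{x}f\|_{L^{2}}\big)$; optimizing in $R$ (take $R=\|\nabla^{2}_{x}f\|_{L^{2}}/\|\nabla_{x}f\|_{L^{2}}$) yields \eqref{v2.3}. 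Equivalently one may just quote the scale-invariant Gagliardo--Nirenberg inequality. From the same argument I would record the corollaries used repeatedly below: $\|h\|_{L^{\infty}(\mathbb{R}^{3})}\le C\|\nabla_{x}h\|_{H^{1}(\mathbb{R}^{3})}$, and, for $p\in[2,6]$, $\|h\|_{L^{p}(\mathbb{R}^{3})}\le C\|h\|_{L^{2}}^{1-\sigma}\|\nabla_{x}h\|_{L^{2}}^{\sigma}$ with $\sigma=3(\tfrac12-\tfrac1p)$ (in particular $\|h\|_{L^{4}}\le C\|h\|_{L^{2}}^{1/4}\|\nabla_{x}h\|_{L^{2}}^{3/4}$).

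For \eqref{v2.4} I would expand $\|fg\|_{H^{2}}=\sum_{|\alpha|\le2}\|\partial^{\alpha}(fg)\|_{L^{2}}$ by the Leibniz rule into a finite sum of terms $\partial^{\beta}f\,\partial^{\alpha-\beta}g$, $|\alpha|\le2$, and bound each by H\"older's inequality. The one point to watch is the asymmetry of the right-hand side: every factor of $g$ must come with at least one derivative. One routes the estimates accordingly: whenever $g$ (or a first derivative of it) carries the fewest derivatives, place it in $L^{\infty}$ via $\|g\|_{L^{\infty}}\le C\|\nabla_{x}g\|_{H^{1}}$ or in $L^{4}$ via the bound above, and let the remaining factor of $f$, carrying the top derivatives, sit in $L^{2}$ controlled by $\|f\|_{H^{2}}$; for the genuinely ``split'' term $\nabla_{x}f\,\nabla_{x}g$ use $L^{4}\times L^{4}$ with $\|\nabla_{x}f\|_{L^{4}}\le C\|f\|_{H^{2}}$ and $\|\nabla_{x}g\|_{L^{4}}\le C\|\nabla_{x}g\|_{H^{1}}$. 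Collecting the finitely many cases gives \eqref{v2.4}.

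Estimate \eqref{v2.5} is handled the same way at one higher level of regularity: for $1\le|\gamma|\le4$ write $\partial^{\gamma}(fg)=\sum_{\beta\le\gamma}\binom{\gamma}{\beta}\partial^{\beta}f\,\partial^{\gamma-\beta}g$ and bound $\|\partial^{\beta}f\,\partial^{\gamma-\beta}g\|_{L^{2}}$ case by case. If $g$ carries no derivative, $\|f\,\partial^{\gamma}g\|_{L^{2}}\le\|f\|_{L^{\infty}}\|\nabla_{x}^{|\gamma|}g\|_{L^{2}}\le C\|\nabla_{x}f\|_{H^{1}}\|\nabla_{x}g\|_{H^{3}}$, where the constraint $1\le|\gamma|\le4$ is precisely what makes $\|\nabla_{x}^{|\gamma|}g\|_{L^{2}}\le\|\nabla_{x}g\|_{H^{3}}$; symmetrically if $f$ carries no derivative; and if $|\beta|=j\ge1$ and $|\gamma-\beta|=k\ge1$ (so $j+k\le4$, hence $j,k\le3$), use $L^{4}\times L^{4}$ H\"older with $\|\nabla_{x}^{j}f\|_{L^{4}}\le C\|\nabla_{x}^{j}f\|_{L^{2}}^{1/4}\|\nabla_{x}^{j+1}f\|_{L^{2}}^{3/4}\le C\|\nabla_{x}f\|_{H^{3}}$ and likewise for $g$. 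Summing the finitely many terms yields \eqref{v2.5}; alternatively \eqref{v2.5} follows at once from the Kato--Ponce/Moser-type product estimate $\|\nabla_{x}^{m}(fg)\|_{L^{2}}\le C\big(\|f\|_{L^{\infty}}\|\nabla_{x}^{m}g\|_{L^{2}}+\|\nabla_{x}^{m}f\|_{L^{2}}\|g\|_{L^{\infty}}\big)$ combined with \eqref{v2.3}.

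None of these steps is deep, so the only genuine obstacle is bookkeeping: one must verify that every Gagliardo--Nirenberg exponent used stays in the admissible range ($L^{p}$ with $p\in[2,6]$ in three dimensions) and that the number of derivatives placed on each factor is compatible with the norms appearing on the right-hand side — in particular that the scale-invariant form of \eqref{v2.3} is what lets an \emph{undifferentiated} factor of $f$ still be absorbed into $\|\nabla_{x}f\|_{H^{k}}$. For the periodic domain needed in Section~5, the analogous product and interpolation inequalities on $\mathbb{T}^{3}$ are used: the homogeneous Gagliardo--Nirenberg bounds there acquire lower-order $L^{2}$ corrections in general, but reduce to the stated scale-invariant form on the subspace of mean-zero functions by the Poincar\'e inequality, which is exactly the regime in which they are applied.
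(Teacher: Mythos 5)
The paper does not give its own proof of Lemma~\ref{vl2.1}; it simply cites~\cite{CDM}, so there is no in-paper argument to compare against. Your proof is correct and is the standard self-contained route: the Fourier-splitting derivation of \eqref{v2.3} is sound (the radial integrals $\int_{|\xi|\le R}|\xi|^{-2}\,{\rm d}\xi\sim R$ and $\int_{|\xi|>R}|\xi|^{-4}\,{\rm d}\xi\sim R^{-1}$ are correct in dimension three, and optimizing $R$ gives the product of square roots), and \eqref{v2.4}--\eqref{v2.5} follow by Leibniz plus H\"older with the right exponent bookkeeping, as you verify. The asymmetry you flag is precisely the subtle point: in \eqref{v2.4} only $g$ must always carry a derivative, while in \eqref{v2.5} both factors must, which is exactly why the constraint $1\le|\gamma|\le 4$ is needed and why the undifferentiated factor must be absorbed through the scale-invariant $L^\infty$ bound $\|h\|_{L^\infty}\le C\|\nabla h\|_{H^1}$. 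One small slip: in the paragraph for \eqref{v2.5} you write ``If $g$ carries no derivative'' but then bound $\|f\,\partial^\gamma g\|_{L^2}$ — that is the case where $f$ carries no derivative; the ``symmetric'' case you mention next is the one where $g$ carries no derivative. The formulas are right; only the labels are swapped, and the argument goes through unchanged.
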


The a priori estimates will be proved in the next two subsections, one for the pure $x$-variable part,  and one for the mixed derivative part.

\subsubsection{ Energy estimates in the $x$-variable}

\begin{Proposition}\label{vl2.2}
For classical solution of the system \eqref{v1.5}-\eqref{v1.8}, we have
\begin{align}
&\frac{1}{2}\frac{\rm d}{{\rm d}t}\|(f,\rho,u,\theta)(t)\|^{2}+\lambda (\|\{\mathbf{I}-\mathbf{P}\}f\|_{\nu}^{2}+
\|b-u\|^{2}+\|\sqrt{2}\omega-\sqrt{3}\theta\|^{2}+\|\nabla u,\nabla\theta\|^{2})
\nonumber\\
&\quad\leq \,C\big(\|(\rho,u,\theta)\|_{H^{2}}+\|\rho\|_{H^{1}}\|u,\theta\|_{H^{1}}+\|u,\theta\|^{2}_{H^{1}}\big)\nonumber\\
 &\quad\quad\times \Big(\|\nabla_{x}(a,b,\omega,\rho,u,\theta)\|^{2}+\|u-b,\sqrt{2}\omega-\sqrt{3}\theta\|^{2}+\|\{\mathbf{I}-\mathbf{P}\}f\|^{2}_{\nu}\Big)\label{v2.6}
\end{align}
for all $0\leq t<T $ with any $T>0 .$
\end{Proposition}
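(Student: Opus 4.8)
The plan is to run the basic energy method: take the $L^2_{x,v}$ inner product of the kinetic equation $\eqref{v1.5}$ with $f$, and the $L^2_x$ inner products of $\eqref{v1.6}$, $\eqref{v1.7}$, $\eqref{vx1.7}$ with $\rho$, $u$, $\theta$ respectively, then add the four resulting identities. First I would dispose of the transport terms: $v\cdot\nabla_x f$ and $u\cdot\nabla_v f$ integrate to zero after integration by parts (the latter because $u$ is independent of $v$), while the fluid convective terms $u\cdot\nabla(\cdot)$ contribute only cubic remainders of the type $\int|\nabla u|\,(\cdot)^2$. For the collision term I will invoke the refined coercivity $\eqref{v2.1}$ of property~(5), which gives $-\iint f\,\mathcal{L}f \ge \lambda\|\{\mathbf{I}-\mathbf{P}\}f\|_\nu^2+\|b\|^2+2\|\omega\|^2$; the viscous term $\frac1{1+\rho}\Delta u$ and the heat-conduction term $\frac1{1+\rho}\Delta\theta$ yield $\|\nabla u\|^2+\|\nabla\theta\|^2$ (up to $\rho/(1+\rho)$ remainders), the friction term $-\frac{1+a}{1+\rho}u$ in $\eqref{v1.7}$ yields $\|u\|^2$, and the relaxation term $3\theta$ in $\eqref{vx1.7}$ yields $3\|\theta\|^2$.

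The core of the proof is the cancellation of the remaining \emph{linear} terms, which is exactly what the decomposition $\mathbf{P}=\mathbf{P}_0\oplus\mathbf{P}_1\oplus\mathbf{P}_2$ together with the precise coefficients in $\eqref{vx1.7}$ is designed to produce. The pressure-work terms $\int\rho\,\dv u$ obtained from testing $\eqref{v1.6}$ against $\rho$ and $\eqref{v1.7}$ against $u$ cancel each other; the work terms $\int\theta\,\dv u$ obtained from testing $\eqref{v1.7}$ against $u$ and $\eqref{vx1.7}$ against $\theta$ cancel each other. The momentum-exchange terms $-\int u\cdot b$, arising both from the source $-u\cdot v\sqrt M$ in $\eqref{v1.5}$ and from the $b$-forcing in $\eqref{v1.7}$, combine with $\|u\|^2$ (friction) and $\|b\|^2$ (coercivity) to form the perfect square $\|b-u\|^2$; likewise the energy-exchange terms $-\sqrt6\int\theta\omega$, arising from the source $(|v|^2-3)\sqrt M\,\theta$ in $\eqref{v1.5}$ and from the $-\sqrt6\,\omega$ term in $\eqref{vx1.7}$, combine with $3\|\theta\|^2$ and $2\|\omega\|^2$ into $\|\sqrt2\,\omega-\sqrt3\,\theta\|^2$. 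After this algebra the left-hand side of $\eqref{v2.6}$ is reproduced exactly, with no bare $\|u\|^2$, $\|\theta\|^2$, $\|b\|^2$ or $\|\omega\|^2$ left over.

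It then remains to bound the quadratic and cubic remainders by the right-hand side of $\eqref{v2.6}$. These are the convective terms, the $\big(\frac1{1+\rho}-1\big)$- and $\big(\frac{1+\theta}{1+\rho}-1\big)$-corrections to the fluid forcing and diffusion, the cubic quantities $|u|^2$, $u\cdot b$, $a|u|^2$, $a\theta$, $\rho(\sqrt6\,\omega-3\theta)$ appearing in $\eqref{vx1.7}$, and the two velocity-weighted kinetic terms $-\tfrac12 u\cdot v f$ and $\theta M^{-1/2}\Delta_v(\sqrt M f)$ in $\eqref{v1.5}$; for the last one I would integrate by parts in $v$, obtaining $\int\theta\big(-\|\nabla_v f\|_{L^2_v}^2+\tfrac14\int |v|^2 f^2\,dv\big)\,dx$, which is controlled by $\|\theta\|_{L^\infty}\|f\|_\nu^2$. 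I will estimate each remainder with the inequalities $\eqref{v2.3}$--$\eqref{v2.5}$ of Lemma~\ref{vl2.1}, together with Hölder and three-dimensional Gagliardo--Nirenberg inequalities and the a priori smallness $\eqref{v2.2}$, always peeling off from each product one factor that carries a spatial gradient (or one of $\{\mathbf{I}-\mathbf{P}\}f$, $u-b$, $\sqrt2\,\omega-\sqrt3\,\theta$), so that it falls into the second parenthesis on the right of $\eqref{v2.6}$ while the norms of $\rho,u,\theta$ supply the first. I expect this bookkeeping to be the main obstacle: in the whole space the bare $L^2$ norms of $a,b,\omega$ and of $f$ are \emph{not} part of the dissipation, so each moment factor must be paired with a gradient via Gagliardo--Nirenberg rather than estimated crudely in $L^\infty$, and the velocity-weighted Fokker--Planck coupling term is the delicate one, since after the integration by parts it reintroduces the full $\nu$-norm of $f$, whose macroscopic part $\|\mathbf{P}f\|\sim\|(a,b,\omega)\|$ must then be handled in exactly this way.
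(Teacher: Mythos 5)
Your plan follows the same route as the paper: test each equation against its own unknown, sum, observe the $\pm\int\rho\,\dv u$, $\pm\int\theta\,\dv u$, and $\pm\int u\cdot b$, $\pm\sqrt6\int\theta\omega$ cancellations via the refined coercivity \eqref{v2.1}, form the perfect squares $\|b-u\|^2$ and $\|\sqrt2\omega-\sqrt3\theta\|^2$, and then estimate the remainders. That is exactly the paper's outline, and your identification of the dissipative quantities and of the integration by parts in $v$ for the Fokker--Planck coupling (yielding $\int\theta\bigl(-|\nabla_v f|^2+\tfrac14|v|^2f^2\bigr)$) is correct.

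However, your third step --- estimating ``each remainder'' individually via H\"older/Gagliardo--Nirenberg, peeling off one dissipative factor per product --- has a real gap. A handful of the cubic terms, specifically $\int a|u|^2$ (from $-\tfrac{u a}{1+\rho}$ in \eqref{v1.7}), $\int u\cdot b\,\theta$ and $\int a\theta^2$ (from \eqref{vx1.7}), and the term $\sqrt6\int a\omega\theta$ that the Fokker--Planck coupling produces once you separate $\mathbf{P}f$ from $\{\mathbf{I}-\mathbf{P}\}f$, \emph{cannot} be put into the required (small)$\times$(dissipation) form by H\"older/G--N alone. Each of them has three factors and, no matter how you distribute the Lebesgue exponents, one factor ends up as a bare $L^2$ norm of $a$, $b$, $u$ or $\theta$, none of which sits in the dissipation rate in $\mathbb R^3$ (only the gradients, $u-b$, $\sqrt2\omega-\sqrt3\theta$ and $\|\{\mathbf{I}-\mathbf{P}\}f\|_\nu$ do). For example $\int a\theta^2\le\|a\|_{L^6}\|\theta\|_{L^3}\|\theta\|_{L^2}$ leaves $\|\theta\|_{L^2}$ stranded, and Young's inequality would then require absorbing a quantity like $\|a\|^4\|\theta\|^2$ that is bounded but not dissipative, so the Gronwall argument would not close. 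The paper resolves this by two further \emph{algebraic} cancellations at the cubic level: the macroscopic part of $\tfrac12\int u\langle vf,f\rangle$ equals precisely $\int au\cdot b+\tfrac{2}{\sqrt6}\int\omega\,u\cdot b$, which combined with $-\int a|u|^2-\int u\cdot b\,\theta$ leaves $-\int au\cdot(u-b)+\tfrac{\sqrt3}{3}\int(\sqrt2\omega-\sqrt3\theta)\,u\cdot b$; and the macroscopic contribution of the FP coupling gives $\sqrt6\int a\omega\theta$, which combined with the $-3\int a\theta^2$ coming from $-\tfrac{3a\theta}{1+\rho}$ leaves $\sqrt3\int a\theta(\sqrt2\omega-\sqrt3\theta)$. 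Only \emph{after} these cancellations does every surviving remainder carry a genuinely dissipative factor. Your plan would need to add these two identities before the term-by-term estimation; as written, a reader applying it verbatim would get stuck on $\int a|u|^2$ and $\int a\theta^2$.
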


\begin{proof}
 Multiplying \eqref{v1.5}-\eqref{vx1.7} by $f,\rho,u,\theta$ respectively, then taking integration and summation, we finally get
 \begin{align}
 &\frac{1}{2}\frac{\rm d}{{\rm d}t}\big(\|f\|^{2} +\|\rho\|^{2}+\|u\|^{2}+\|\theta\|^{2}\big)+\int\langle -\mathcal{L}\{\mathbf{I}-\mathbf{P}\}f,f\rangle   \, {\rm d}x\nonumber\\
 &\qquad\,\,\qquad+\int\frac{|\nabla u|^{2}}{1+\rho}\, {\rm d}x +\int\frac{|\nabla \theta|^{2}}{1+\rho}\, {\rm d}x +\|b-u\|^{2}+\|\sqrt{2}\omega-\sqrt{3}\theta\|^{2}\nonumber\\
&\,\,=\int u\langle \frac{1}{2}vf,f\rangle   \, {\rm d}x -\langle\frac{au}{1+\rho},u\rangle -\langle\frac{u\cdot b}{1+\rho},\theta\rangle\nonumber\\
&\,\,\quad -\int \theta(\nabla_{v}f-\frac{v}{2}f) (\nabla_{v}f+\frac{v}{2}f){\rm d}x{\rm d}v - \langle\frac{3a\theta}{1+\rho},\theta\rangle\nonumber\\
 &\,\,\quad-\int(u\cdot\nabla u)\cdot u\, {\rm d}x -\int\rho\nabla \rho\cdot u\, {\rm d}x - -\int u\cdot\nabla\theta \theta\, {\rm d}x-\int\frac{\theta-\rho}{1+\rho}\nabla \rho\cdot u\, {\rm d}x\nonumber\\
 &\,\,\quad+\int\frac{(\nabla\rho\cdot\nabla)u}{(1+\rho)^{2}}\cdot u\, {\rm d}x+\int\frac{\nabla\rho}{(1+\rho)^{2}}\nabla \theta\cdot\theta\, {\rm d}x
 -\int (\theta^{2}+\rho^{2})\dv u\, {\rm d}x \nonumber\\
 &\,\,\quad+\langle\frac{(u-b)\cdot u+a|u|^{2}}{1+\rho},\theta\rangle-\langle\frac{\rho}{1+\rho}(u-b),u\rangle
 -\langle\frac{\rho}{1+\rho}\big(\sqrt{6}\omega-3\theta\big),\theta\rangle. \label{v2.7}
\end{align}
By \eqref{v2.1}, we have
$$
\langle -\mathcal{L}\{(\mathbf{I}-\mathbf{P})\}f, f\rangle \geq \lambda_{0}|\{(\mathbf{I}-\mathbf{P})\}f|^{2}_{\nu}.
$$
Next, we  make estimates of the terms on the right hand side of the equality \eqref{v2.7}.\\

(1) The estimates for $\int u\langle \frac{1}{2}vf,f\rangle   \, {\rm d}x -\langle\frac{au}{1+\rho},u\rangle -\langle\frac{u\cdot b}{1+\rho},\theta\rangle$ are obtained as follows.\\
We first notice that
\begin{align}
&\int u\langle \frac{1}{2}vf,f\rangle   \, {\rm d}x -\langle\frac{au}{1+\rho},u\rangle -\langle\frac{u\cdot b}{1+\rho},\theta\rangle \nonumber\\
&=\int u\langle\frac{1}{2}vf,f\rangle \, {\rm d}x - \int a |u|^{2}\, {\rm d}x - \int u\cdot b\theta\, {\rm d}x+\langle\frac{\rho}{1+\rho}au,u\rangle
+\langle\frac{\rho}{1+\rho}u\cdot b,\theta\rangle.\nonumber
\end{align}
According to the macro-micro decomposition  \eqref{decomp}, we get
\begin{align}
&\int u\langle\frac{1}{2}vf,f\rangle \, {\rm d}x - \int a |u|^{2}\, {\rm d}x - \int u\cdot b\theta\, {\rm d}x\nonumber\\
&\quad =\int au\cdot(u-b)\, {\rm d}x +\frac{\sqrt{3}}{3}\int (\sqrt{2}\omega-\sqrt{3}\theta)u\cdot b\, {\rm d}x\nonumber\\
&\qquad+\int u\langle v\mathbf{P}f,(\mathbf{I-P})f\rangle\, {\rm d}x
+\frac{1}{2}\int u\langle \theta,|(\mathbf{I-P})f|^{2}\rangle\, {\rm d}x\nonumber\\
&\quad \leq \|a\|_{L^{6}}\|u\|_{L^{3}}\|u-b\|_{L^{2}}+\|b\|_{L^{6}}\|u\|_{L^{3}}\|\sqrt{2}\omega-\sqrt{3}\theta\|_{L^{2}}\nonumber\\
&\qquad+C\|(a,b,\omega)\|_{L^{6}}\|u\|_{L^{3}}\|(\mathbf{I-P})f\|_{L^{2}}+C\|u\|_{L^{\infty}}\|(\mathbf{I-P})f\|^{2}_{\nu}\nonumber\\
&\quad \leq C\big(\|u\|_{H^{1}}+\|\nabla u\|_{H^{1}}\big)\|(\mathbf{I-P})f\|^{2}_{\nu}\nonumber\\
&\qquad+C\|u\|_{H^{1}}\big(\|u-b\|^{2}_{L^{2}}+\|\sqrt{2}\omega-\sqrt{3}\theta\|^{2}_{L^{2}}+\|\nabla(a,b,\omega)\|^{2}_{L^{2}}\big),\nonumber
\end{align}
and meanwhile by  H\"{o}lder and Sobolev inequalities, one gets
\begin{align}
\langle\frac{\rho}{1+\rho}au,u\rangle
+\langle\frac{\rho}{1+\rho}u\cdot b,\theta\rangle 
&\leq C\int|\rho||a||u|^{2}\, {\rm d}x+C\int|\rho||\theta||u||b|\, {\rm d}x\nonumber\\
&\leq C \|\rho\|_{H^{1}}\big(\|u\|_{H^{1}}+\|\theta\|_{H^{1}}\big)\|\nabla(a,b,u)\|_{L^{2}}.\nonumber
\end{align}
Thus, we eventually have
\begin{align}
&\int u\langle\frac{1}{2}vf,f\rangle \, {\rm d}x - \int a |u|^{2}\, {\rm d}x - \int u\cdot b\theta\, {\rm d}x\nonumber\\
&\quad \leq C\big(\|u\|_{H^{1}}+\|\nabla u\|_{H^{1}}+\|\rho\|_{H^{1}}(\|u\|_{H^{1}}+\|\theta\|_{H^{1}})\big)\nonumber\\
&\qquad \cdot\Big(\|(\mathbf{I-P})f\|^{2}_{\nu}+\|u-b\|^{2}_{L^{2}}+\|\sqrt{2}\omega-\sqrt{3}\theta\|^{2}_{L^{2}}+\|\nabla(a,b,\omega,u)\|^{2}_{L^{2}}\Big).\label{vx2.7}
\end{align}

(2) The estimates for$-\int \theta(\nabla_{v}f-\frac{v}{2}f) (\nabla_{v}f+\frac{v}{2}f){\rm d}x{\rm d}v - \langle\frac{3a\theta}{1+\rho},\theta\rangle$
are obtained as follows.\\
First, the term $-\int \theta(\nabla_{v}f-\frac{v}{2}f) (\nabla_{v}f+\frac{v}{2}f){\rm d}x{\rm d}v$  can be rewritten as
\begin{align}
&\int \theta\big(\nabla_{v}f-\frac{v}{2}f\big) \big(\nabla_{v}f+\frac{v}{2}f\big){\rm d}x{\rm d}v \nonumber\\
&\quad=\int \theta \Big(|\nabla_{v}f|^{2}-\frac{|v|^{2}}{4}f^{2}\Big){\rm d}x{\rm d}v\nonumber\\
&\quad=\int \theta \Big(|\nabla_{v}\{\mathbf{I-P}\}f|^{2}-\frac{|v|^{2}}{4}|\{\mathbf{I-P}\}f|^{2}\Big){\rm d}x{\rm d}v\nonumber\\
&\qquad+\int \theta \Big(|\nabla_{v}\mathbf{P}f|^{2}-\frac{|v|^{2}}{4}|\mathbf{P}f|^{2}\Big){\rm d}x{\rm d}v\nonumber\\
&\qquad+\int 2 \theta \Big(\nabla_{v}\{\mathbf{I-P}\}f\cdot\nabla_{v}\mathbf{P}f-\frac{|v|^{2}}{4}\{\mathbf{I-P}\}f\cdot\mathbf{P}f\Big){\rm d}x{\rm d}v\nonumber\\
&\quad=\int \theta \Big(|\nabla_{v}\{\mathbf{I-P}\}f|^{2}-\frac{|v|^{2}}{4}|\{\mathbf{I-P}\}f|^{2}\Big){\rm d}x{\rm d}v\nonumber\\
&\qquad+\int 2 \theta \Big(\nabla_{v}\{\mathbf{I-P}\}f\cdot\nabla_{v}\mathbf{P}f-\frac{|v|^{2}}{4}\{\mathbf{I-P}\}f\cdot\mathbf{P}f\Big){\rm d}x{\rm d}v
-\sqrt{6}\int a\omega \theta{\rm d}x.\nonumber
\end{align}
Thus,  the following estimates hold
\begin{align}
&-\int \theta(\nabla_{v}f-\frac{v}{2}f) (\nabla_{v}f+\frac{v}{2}f){\rm d}x{\rm d}v - \langle\frac{3a\theta}{1+\rho},\theta\rangle\nonumber\\
&\,\,\, =\sqrt{3}\int a\theta(\sqrt{2}\omega-\sqrt{3}\theta){\rm d}x+\langle\frac{3\rho a\theta}{1+\rho},\theta\rangle\nonumber\\
&\qquad-\int \theta \Big(|\nabla_{v}\{\mathbf{I-P}\}f|^{2}-\frac{|v|^{2}}{4}|\{\mathbf{I-P}\}f|^{2}\Big){\rm d}x{\rm d}v\nonumber\\
&\qquad-\int 2 \theta \Big(\nabla_{v}\{\mathbf{I-P}\}f\cdot\nabla_{v}\mathbf{P}f-\frac{|v|^{2}}{4}\{\mathbf{I-P}\}f\cdot\mathbf{P}f\Big){\rm d}x{\rm d}v\nonumber\\
&\,\,\,\leq C\|a\|_{L^{6}}\|\theta\|_{L^{3}}\|\sqrt{2}\omega-\sqrt{3}\theta\|_{L^{2}}+C\|a\|_{L^{6}}\|\rho\|_{L^{6}}\|\theta\|^{2}_{L^{3}}\nonumber\\
&\qquad +C\|\theta\|_{L^{\infty}}\|\{\mathbf{I-P}\}f\|^{2}_{\nu}+C\|\{\mathbf{I-P}\}f\|_{\nu}\|(a,b,\omega)\|_{L^{6}}\|\theta\|_{L^{3}}\nonumber\\
&\,\,\, \leq C \Big(\|\theta\|_{H^{2}}+\|\theta\|^{2}_{H^{1}}\Big)\nonumber\\
&\qquad \times \Big(\|\{\mathbf{I-P}\}f\|^{2}_{\nu}+\|\nabla_{x}(a,b,\omega)\|^{2}_{L^{2}}
+\|\sqrt{2}\omega-\sqrt{3}\theta\|^{2}_{L^{2}}\Big).\label{vx2.8}
\end{align}

(3)  For the estimates of all the remaining terms, using H\"{o}lder and Sobolev inequalities as well as Lemma \ref{vl2.1}, we easily get
\begin{align}
\int(u\cdot\nabla u)\cdot u\, {\rm d}x &+\int(u\cdot\nabla \rho)\cdot \rho\, {\rm d}x +\int(u\cdot\nabla \theta)\cdot \theta\, {\rm d}x \nonumber\\
&\leq C\|u\|_{L^{3}}\|(\nabla_{x}u,\nabla\rho,\nabla\theta)\|_{L^{2}}\|(u,\rho,\theta)\|_{L^{6}}\nonumber\\
&\leq C\|u\|_{H^{1}}\|(\nabla u,\nabla \rho,\nabla \theta)\|^{2}_{L^{2}},\nonumber\\
\int\frac{1}{(1+\rho)^{2}}\nabla \rho\big(\nabla u\cdot u&+\nabla \theta \cdot \theta\big)\, {\rm d}x \nonumber\\
&\leq C \|(u,\theta)\|_{H^{2}}\big(\|\nabla \rho\|^{2}+\|\nabla u\|^{2}+\|\nabla \theta\|^{2}\big),\nonumber\\
\int\frac{\theta-\rho}{1+\rho}\nabla\rho \cdot u\, {\rm d}x & \leq C \|(\rho,\theta)\|_{L^{3}}\|\nabla_{x}\rho\|_{L^{2}}\|u\|_{L^{6}}\nonumber\\
&\leq C\|(\rho,\theta)\|_{H^{1}}\big(\|\nabla \rho\|^{2}+\|\nabla u\|^{2}\big),\nonumber\\
\int \big(\rho^{2}+\theta^{2}\big)\dv u\, {\rm d}x &\leq C\|(\rho,\theta)\|_{L^{3}}\|\nabla_{x}u\|_{L^{2}}\|(\rho,\theta)\|_{L^{6}}\nonumber\\
&\leq C\|(\rho,\theta)\|_{H^{1}}(\|(\nabla \rho,\nabla\theta\|^{2}+\|\nabla u\|^{2}),\nonumber\\
\Big\langle\frac{\rho}{1+\rho}(b-u),u\Big\rangle&+\Big\langle\frac{u}{1+\rho}(u-b),\theta\Big\rangle +\Big\langle\frac{\rho}{1+\rho}(\sqrt{6}\omega-3\theta),\theta\Big\rangle\nonumber\\
&\leq C \|(u,\theta)\|_{H^{1}}\Big(\|(b-u,\sqrt{2}\omega-\sqrt{3}\theta)\|^{2}+\|(\nabla \theta,\nabla\rho)\|^{2}\Big),\nonumber\\
\Big\langle\frac{1}{1+\rho}a|u|^{2},\theta\Big\rangle &\leq C\|a\|_{L^{6}}\|\theta\|_{L^{6}}\|u\|^{2}_{L^{3}}\nonumber\\
&\leq C\|u\|^{2}_{H^{1}}\Big(\|\nabla a\|^{2}+\|\nabla\theta\|^{2}\Big). \label{vx2.9}
\end{align}

Substituting all the above estimates \eqref{vx2.7}, \eqref{vx2.8}, \eqref{vx2.9}  into \eqref{v2.7} and using \eqref{v2.2}, we   obtain \eqref{v2.6}.
\end{proof}

\begin{Proposition}\label{vl2.3}
For smooth solutions of the problem \eqref{v1.5}-\eqref{v1.8}, we have
\begin{align}
&\frac{1}{2}\frac{\rm d}{{\rm d}t}\sum_{1\leq |\alpha|\leq 4}\Big\{\big\|\partial^{\alpha}f \big\|^{2}+ \big\|\partial^{\alpha}\rho\big\|^{2}
+\big\|\partial^{\alpha}u\big\|^{2}+\big\|\partial^{\alpha}\theta\big\|^{2}\Big\}\nonumber\\
&\quad+\lambda \sum_{1\leq |\alpha|\leq 4}\Big\{\|\{\mathbf{I}-\mathbf{P}\}\partial^{\alpha}f\|_{\nu}^{2}+
\|\partial^{\alpha}(b-u,\sqrt{2}\omega-\sqrt{3}\theta)\|^{2}+\|\nabla \partial^{\alpha} (u,\theta)\|^{2}\Big\}\nonumber\\
&\leq \,C\big\{\|(\rho,u,\theta)\|_{H^{4}}+\|(\rho,u,\theta)\|^{2}_{H^{4}}\big\}\cdot\big(1+\|\rho\|^{4}_{H^{4}}\big)^{2}
\nonumber\\
&\quad\times \Big\{\|\nabla_{x}(a,b,\omega,\rho,u,\theta)\|^{2}_{H^{3}}+\sum_{1\leq |\alpha'|\leq 4}\|\{\mathbf{I}-\mathbf{P}\}\partial^{\alpha'}f\|^{2}_{\nu}\Big\},\label{v2.8}
\end{align}
for all $0\leq t<T $ with any $T>0 .$
\end{Proposition}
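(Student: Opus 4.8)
The plan is to carry out, at the level of the $x$-derivatives $\partial^{\alpha}$ with $1\le|\alpha|\le 4$, the same energy computation as in the proof of Proposition \ref{vl2.2}. Fix such an $\alpha$, apply $\partial^{\alpha}$ to \eqref{v1.5}--\eqref{vx1.7}, pair the four resulting equations with $\partial^{\alpha}f$, $\partial^{\alpha}\rho$, $\partial^{\alpha}u$, $\partial^{\alpha}\theta$ respectively, integrate over $\Omega\times\mathbb{R}^{3}$ (resp.\ $\Omega$), add them, and sum over $\alpha$. The time-derivative gives $\frac12\frac{\rm d}{{\rm d}t}\sum_{1\le|\alpha|\le4}\{\|\partial^{\alpha}f\|^{2}+\|\partial^{\alpha}\rho\|^{2}+\|\partial^{\alpha}u\|^{2}+\|\partial^{\alpha}\theta\|^{2}\}$. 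The free transport $v\cdot\nabla_{x}\partial^{\alpha}f$ vanishes after integration in $x$; the leading ($\alpha'=0$) pieces of $\partial^{\alpha}(u\cdot\nabla_{v}f)$, $\partial^{\alpha}(u\cdot\nabla_{x}\,\cdot\,)$, $\partial^{\alpha}(u\cdot\nabla\rho)$, etc.\ either vanish or leave cubic remainders, and the linear $\rho$--$u$ coupling (the terms $\dv u$ in \eqref{v1.6} and $\nabla\rho$ in \eqref{v1.7}) cancels between the $\rho$- and $u$-equations exactly as at zeroth order. The genuinely linear couplings $-\partial^{\alpha}u\cdot v\sqrt{M}-(|v|^{2}-3)\sqrt{M}\,\partial^{\alpha}\theta$ in the $f$-equation, together with the $b$- and $\omega$-couplings in \eqref{v1.7}--\eqref{vx1.7} (the prefactors $\frac1{1+\rho}$ being split as $1-\frac{\rho}{1+\rho}$, with the remainder sent to the right-hand side), combine after the macro--micro decomposition \eqref{decomp} and the coercivity \eqref{v2.1} to produce, on the left, the dissipation $\lambda\sum_{1\le|\alpha|\le4}\{\|\{\mathbf{I}-\mathbf{P}\}\partial^{\alpha}f\|_{\nu}^{2}+\|\partial^{\alpha}(b-u,\sqrt{2}\omega-\sqrt{3}\theta)\|^{2}\}$, while $\partial^{\alpha}$ of the viscous and heat-conduction operators, using $1+\rho\sim1$ from \eqref{v2.2}, contributes $\lambda\sum\|\nabla\partial^{\alpha}(u,\theta)\|^{2}$.

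It then remains to bound every remaining term by the right-hand side of \eqref{v2.8}. I would treat them one by one: expand $\partial^{\alpha}$ by Leibniz; for factors $(1+\rho)^{-k}$ expand $\partial^{\alpha'}\big((1+\rho)^{-k}\big)$ into a finite sum of $(1+\rho)^{-k-j}$ times products of $\partial^{\beta_{i}}\rho$ with $\sum|\beta_{i}|=|\alpha'|\le4$, which — since $1+\rho$ is bounded below by \eqref{v2.2} — is controlled in $L^{\infty}$ or $L^{q}$ by a polynomial in $\|\rho\|_{H^{4}}$, this being the origin of the factor $(1+\|\rho\|^{4}_{H^{4}})^{2}$; then distribute the derivatives so that at most one factor carries three or four derivatives and sits in $L^{2}$, the others being placed in $L^{\infty}$, $L^{3}$ or $L^{6}$ via H\"{o}lder and Lemma \ref{vl2.1} — here it is crucial that \eqref{v2.3} contains no zeroth-order norm, so low-order factors are controlled purely by $\|\nabla_{x}(a,b,\omega,\rho,u,\theta)\|_{H^{3}}$ and the microscopic norms; and finally replace $\partial^{\beta}f$ and its $v$-derivatives by means of $f=\mathbf{P}f+\{\mathbf{I}-\mathbf{P}\}f$: any $v$-derivative of $\mathbf{P}\partial^{\beta}f$ is a fixed polynomial in $v$ times $\sqrt{M}$ with coefficients $\partial^{\beta}(a,b,\omega)$, bounded by $\|\nabla_{x}(a,b,\omega)\|_{H^{3}}$ once $|\beta|\ge1$ and by $\|(a,b,\omega)\|_{L^{\infty}}\lesssim\|\nabla_{x}(a,b,\omega)\|_{H^{1}}$ when $\beta=0$, while $\|\nabla_{v}\{\mathbf{I}-\mathbf{P}\}\partial^{\beta}f\|$ is dominated by $|\{\mathbf{I}-\mathbf{P}\}\partial^{\beta}f|_{\nu}$. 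Since every nonlinear term of \eqref{v1.5}--\eqref{vx1.7} carries at least one factor of $\rho$, $u$ or $\theta$, one $H^{4}$-norm of $(\rho,u,\theta)$ (or one polynomial weight in $\|\rho\|_{H^{4}}$) is always available as the explicit small prefactor, and the two remaining factors are each bounded by the square root of $\|\nabla_{x}(a,b,\omega,\rho,u,\theta)\|^{2}_{H^{3}}+\sum_{1\le|\alpha'|\le4}\|\{\mathbf{I}-\mathbf{P}\}\partial^{\alpha'}f\|^{2}_{\nu}$; multiplying out gives \eqref{v2.8}.

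I expect the most delicate terms to be those coming from the Fokker--Planck perturbation $\theta M^{-1/2}\Delta_{v}(\sqrt{M}f)$ and from the velocity transport $u\cdot\nabla_{v}f$, since these are the only sources of $v$-derivatives of $f$. After integration by parts in $v$ one has $\langle\partial^{\alpha'}\theta\, M^{-1/2}\Delta_{v}(\sqrt{M}\partial^{\alpha-\alpha'}f),\partial^{\alpha}f\rangle=-\int\partial^{\alpha'}\theta\,(\nabla_{v}\partial^{\alpha-\alpha'}f-\frac{v}{2}\partial^{\alpha-\alpha'}f)(\nabla_{v}\partial^{\alpha}f+\frac{v}{2}\partial^{\alpha}f)$, and for $\alpha'=0$ this is the higher-order analogue of the term handled in \eqref{vx2.8}: splitting $\partial^{\alpha}f=\mathbf{P}\partial^{\alpha}f+\{\mathbf{I}-\mathbf{P}\}\partial^{\alpha}f$, the micro--micro part is controlled by $\|\theta\|_{L^{\infty}}\|\{\mathbf{I}-\mathbf{P}\}\partial^{\alpha}f\|_{\nu}^{2}$, the cross part by $\|\theta\|_{L^{\infty}}\|\{\mathbf{I}-\mathbf{P}\}\partial^{\alpha}f\|_{\nu}\|\partial^{\alpha}(a,b,\omega)\|$, and the macro--macro part reduces, via the same parity computation that made it a multiple of $\int a\omega\theta$ at zeroth order, to a term in $\partial^{\alpha}a,\partial^{\alpha}\omega,\theta$ whose bad quadratic piece $\propto\int\partial^{\alpha}a\,\partial^{\alpha}\theta\,\theta$ cancels against $\partial^{\alpha}$ of the energy coupling $-\langle\frac{3a\theta}{1+\rho},\theta\rangle$, leaving a term controlled by $\|\theta\|_{L^{\infty}}\|\partial^{\alpha}a\|\|\partial^{\alpha}(\sqrt{2}\omega-\sqrt{3}\theta)\|$. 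The main obstacle is to organize this cancellation, together with the numerous Leibniz and commutator remainders $[\partial^{\alpha},u\cdot\nabla_{x}]$, $[\partial^{\alpha},(1+\rho)^{-1}]\Delta$, etc., uniformly over $1\le|\alpha|\le4$ while keeping each leftover factor strictly inside the dissipation-type quantity on the right of \eqref{v2.8} — in particular using the zeroth-order-free Gagliardo--Nirenberg inequality \eqref{v2.3} to avoid norms such as $\|(a,b,\omega)\|_{L^{2}}$ or $\|\{\mathbf{I}-\mathbf{P}\}f\|_{\nu}$, which do not appear there. Substituting all the resulting bounds into the summed identity and invoking \eqref{v2.2} yields \eqref{v2.8}.
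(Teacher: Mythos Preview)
Your proposal is correct and follows essentially the same strategy as the paper's proof: differentiate, pair with $\partial^{\alpha}f,\partial^{\alpha}\rho,\partial^{\alpha}u,\partial^{\alpha}\theta$, extract the linear dissipation via \eqref{v2.1} and the viscous/heat terms, and bound all nonlinear remainders via Leibniz, H\"older, Sobolev and the product estimates of Lemma~\ref{vl2.1}. One simplification worth noting: the delicate cancellation between the macro--macro part of the Fokker--Planck term and the $\frac{3a\theta}{1+\rho}$ coupling that you propose to replicate from \eqref{vx2.8} is \emph{not needed} at this order, since for $|\alpha|\ge1$ the quantities $\|\partial^{\alpha}a\|,\|\partial^{\alpha}\omega\|,\|\partial^{\alpha}\theta\|$ already sit inside $\|\nabla_{x}(a,b,\omega,\rho,u,\theta)\|_{H^{3}}$; the paper accordingly bounds these two contributions ($I_{2}$ and $I_{9}$ in its notation) separately and directly via \eqref{v2.5}, which is cleaner than your hand-tuned $L^{\infty}/L^{3}/L^{6}$ splitting but equivalent in effect.
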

\begin{proof}Here, we follow  the similar argument \cite[Proposition 4.2]{LMW}.
Applying differentiation $\partial^{\alpha} (1\leq |\alpha|\leq 4)$ to  \eqref{v1.5}-\eqref{vx1.7} yields
\begin{align}
&\partial_{t}(\partial^{\alpha}f)+v\cdot \nabla_{x}(\partial^{\alpha}f) +u\cdot \nabla_{v}(\partial^{\alpha}f)
-\partial^{\alpha}u\cdot v M^{\frac{1}{2}}-(|v|^{2}-3)M^{\frac{1}{2}}\partial^{\alpha}\theta-\mathcal{L}\partial^{\alpha}f\nonumber\\
 &\qquad =\frac{1}{2}\partial^{\alpha}\big(u\cdot vf\big)
 +[-\partial^{\alpha},u\cdot \nabla_{v}]f+\partial^{\alpha}\big(\theta\cdot M^{-\frac{1}{2}}\Delta_{v}(M^{\frac{1}{2}}f)\big),\label{v2.9}\\
 &\partial_{t}(\partial^{\alpha}\rho)+u\cdot \nabla \partial^{\alpha}\rho+(1+\rho)\dv \partial^{\alpha}u
 =[-\partial^{\alpha},\rho \nabla_{x}\cdot]u+[-\partial^{\alpha}, u \cdot\nabla_{x}]\rho,\label{v2.10}\\
 &\partial_{t}(\partial^{\alpha}u)+u\cdot \nabla (\partial^{\alpha} u)+\nabla\partial^{\alpha}\theta
 +\nabla\partial^{\alpha}\rho
 -\partial^{\alpha}\Big(\frac{1}{1+\rho}\Delta u\Big)-\partial^{\alpha}(b-u)\nonumber\\
 &\qquad=[-\partial^{\alpha},u\cdot\nabla_{x}]u+\partial^{\alpha}\Big(\frac{\rho-\theta}{1+\rho}\nabla_{x}\rho\Big)
 -\partial^{\alpha}\Big(\frac{1}{1+\rho}ua\Big),\label{v2.11}\\
 &\partial_{t}(\partial^{\alpha}\theta)+\dv(\partial^{\alpha}u)-\partial^{\alpha}\Big(\frac{1}{1+\rho}\Delta \theta\Big)
 -\sqrt{3}\big(\sqrt{2}\partial^{\alpha}\omega-\sqrt{3}\partial^{\alpha}\theta\big)\nonumber\\
 &\qquad =-\partial^{\alpha}\big(u\cdot\nabla\theta+\theta\dv u\big)+\partial^{\alpha}\Big(\frac{1}{1+\rho}\big((u-b)\cdot u+a|u|^{2}-u\cdot b\big)\Big)\nonumber\\
 &\qquad\quad-\sqrt{3}\Big(\frac{\rho}{1+\rho}\big(\sqrt{2}\partial^{\alpha}\omega-\sqrt{3}\partial^{\alpha}\theta\big)\Big)
 -\partial^{\alpha}\Big(\frac{3}{1+\rho}a\cdot\theta\Big),\label{vx2.11}
\end{align}
where $[A,B]$ denotes the commutator $AB-BA$\, for two operators $A$ and $B.$
Now, multiplying \eqref{v2.9}-\eqref{vx2.11} by $\partial^{\alpha}f,\frac{p'(1+\rho)}{(1+\rho)^{2}}\partial^{\alpha}\rho$,$\partial^{\alpha}u,\partial^{\alpha}\theta$
 respectively, then taking integration and summation, we obtain
 \begin{align}
&\frac{1}{2}\frac{\rm d}{{\rm d}t}\Big\{\|\partial^{\alpha}f\|^{2} +\|\partial^{\alpha}\rho\|^{2}
+\|\partial^{\alpha}u\|^{2}+\|\partial^{\alpha}u\|^{2}\Big\}+\lambda\int\big(|\nabla(\partial^{\alpha}u)|^{2}+|\nabla(\partial^{\alpha}\theta)|^{2}\big)\, {\rm d}x\nonumber\\
&\qquad+\int\langle -\mathcal{L}\{\mathbf{I}-\mathbf{P}\}\partial^{\alpha}f,\{\mathbf{I}-\mathbf{P}\}\partial^{\alpha}f\rangle   \, {\rm d}x
  +\|\partial^{\alpha}(b-u)\|^{2}+\|\partial^{\alpha}(\sqrt{2}\omega-\sqrt{3}\theta)\|^{2}\nonumber\\
&\,\,=\int\frac{1}{2}\langle \partial^{\alpha}\big(u\cdot vf\big),\partial^{\alpha}f\rangle   \, {\rm d}x
+\int\langle \partial^{\alpha}\big(\theta\cdot (M^{-\frac{1}{2}}\Delta_{v}(M^{\frac{1}{2}}f)\big),\partial^{\alpha}f\rangle   \, {\rm d}x\nonumber\\
&\qquad +\int\langle [-\partial^{\alpha},u\cdot \nabla_{v}]f,\partial^{\alpha}f\rangle\, {\rm d}x
+\int [-\partial^{\alpha},u\cdot \nabla_{x}]\rho\partial^{\alpha}\rho\, {\rm d}x
\nonumber\\
&\qquad+\int[-\partial^{\alpha},u\cdot\nabla_{x}]u\partial^{\alpha}u\, {\rm d}x
 +\int [-\partial^{\alpha},\rho \nabla_{x}\cdot]u\partial^{\alpha}\rho\, {\rm d}x \nonumber\\
&\qquad -\int\partial^{\alpha}\Big(\frac{a u}{1+\rho}\Big)\cdot\partial^{\alpha}u\, {\rm d}x
 -\int\partial^{\alpha}\Big(\frac{u\cdot b}{1+\rho} \Big)\partial^{\alpha}\theta\, {\rm d}x
  -\int\partial^{\alpha}\Big(\frac{3a \theta}{1+\rho}\Big)\cdot\partial^{\alpha}\theta\, {\rm d}x\nonumber\\
&\qquad+\int\partial^{\alpha}\big(\frac{\rho-\theta}{1+\rho}\nabla \rho\big)\partial^{\alpha}u\, {\rm d}x-\int\rho\dv\partial^{\alpha} u\partial^{\alpha}\rho \, {\rm d}x
-\int\partial^{\alpha} \big(\theta\dv u\big)\partial^{\alpha}\theta\, {\rm d}x\nonumber\\
& \qquad-\int u\cdot\nabla \partial^{\alpha}\rho\partial^{\alpha}\rho\, {\rm d}x-\int u\cdot\nabla \partial^{\alpha}u\cdot\partial^{\alpha}u\, {\rm d}x
-\int \partial^{\alpha}( u\cdot\nabla\theta)\partial^{\alpha}\theta\, {\rm d}x \nonumber\\
&\qquad-\sum_{1\leq\beta\leq\alpha}c_{\alpha,\beta}\int\partial^{\beta}\Big(\frac{1}{1+\rho}\Big)\partial^{\alpha-\beta}\Delta u\partial^{\alpha}u\, {\rm d}x -\sum_{i=1}^{3}\int\nabla\frac{1}{1+\rho}\nabla\partial^{\alpha}u_{i}\partial^{\alpha}u_{i}\, {\rm d}x
\nonumber\\
&\qquad-\sum_{1\leq\beta\leq\alpha}c_{\alpha,\beta}\int\partial^{\beta}\Big(\frac{1}{1+\rho}\Big)\partial^{\alpha-\beta}\Delta \theta\partial^{\alpha}\theta\, {\rm d}x
-\int\nabla\frac{1}{1+\rho}\nabla\partial^{\alpha}\theta\partial^{\alpha}\theta\, {\rm d}x\nonumber\\
&\qquad-\int\partial^{\alpha}\Big(\frac{\rho}{1+\rho}(b-u)\Big)\cdot\partial^{\alpha}u \, {\rm d}x
-\sqrt{3}\int\partial^{\alpha}\Big(\frac{\rho}{1+\rho}(\sqrt{2}\omega-\sqrt{3}\theta)\Big)\cdot\partial^{\alpha}\theta \, {\rm d}x\nonumber\\
&\qquad+\int\partial^{\alpha}\Big(\frac{1}{1+\rho}(u-b)\cdot u\Big)\partial^{\alpha}\theta\, {\rm d}x
+\int\partial^{\alpha}\Big(\frac{1}{1+\rho}a|u|^{2}\Big)\partial^{\alpha}\theta\, {\rm d}x \nonumber\\
&\,\,:=\sum_{j=1}^{23}I_{j}, \label{v2.12}
\end{align}
where $C_{\alpha,\beta}$ is constant depending only on $\alpha$ and $\beta$.

We need to estimate   each term on the right hand side of \eqref{v2.12}. Firstly
\begin{align}
I_{1}\leq\,& C \|\partial^{\alpha}(uf)\|\|v\partial^{\alpha} f\| \nonumber\\
\leq\,& C\|\nabla u\|_{H^{3}}\|\nabla_{x}f\|_{L^{2}_{v}(H^{3}_{x})}\|v\partial^{\alpha}f\|,\nonumber\\
I_{2}=\,&-\int\partial^{\alpha}\Big(\theta(\nabla_{v}f-\frac{v}{2}f)\Big)\cdot\Big(\nabla_{v}\partial^{\alpha}f+\frac{v}{2}\partial^{\alpha}f)\Big)\nonumber\\
\leq\,& \|\partial^{\alpha}\Big(\theta(\nabla_{v}f-\frac{v}{2}f)\Big)\|\|\nabla_{v}\partial^{\alpha}f+\frac{v}{2}\partial^{\alpha}f)\|\nonumber\\
\leq\,&C\|\nabla\theta\|_{H^{3}}\|(\nabla_{v}-\frac{v}{2})\nabla_{x} f\|_{L^{2}_{v}(H^{3})}\|(\nabla_{v}+\frac{v}{2})\nabla_{x} f\|_{L^{2}_{v}(H^{3})}\nonumber\\
\leq\,&C\|\nabla\theta\|_{H^{3}}\Big\{\sum_{1\leq |\alpha'|\leq 4}\|\{\mathbf{I-P}\}\partial^{\alpha'}f\|_{\nu}^{2}+\|\nabla(a,b,\omega)\|^{2}_{H^{3}}\Big\},\nonumber
\end{align}
where we have used   the following fact
\begin{align}
\|(\nabla_{v}-&\frac{v}{2})\nabla f\|_{L^{2}_{v}(H^{3})}\nonumber\\
\leq\,& \sum_{1\leq |\alpha'|\leq 4}\|\{(\nabla_{v}-\frac{v}{2})\}\partial^{\alpha'}f\|_{L^{2}_{x,v}}\nonumber\\
\leq\,& \sum_{1\leq |\alpha'|\leq 4}\Big(\|\{(\nabla_{v}-\frac{v}{2})\}\{\mathbf{I-P}\}\partial^{\alpha'}f\|
+\|\{(\nabla_{v}-\frac{v}{2})\}\mathbf{P}\partial^{\alpha'}f\|\Big)\nonumber\\
\leq\,&\sum_{1\leq |\alpha'|\leq 4}\Big(\|\{\mathbf{I-P}\}\partial^{\alpha'}f\|_{\nu}+\|\partial^{\alpha'}(a,b,\omega)\|\Big),\nonumber
\end{align}
and the similar estimate holds for $\|(\nabla_{v}+\frac{v}{2})\nabla f\|_{L^{2}_{v}(H^{3})}$.

Using H\"{o}lder, Sobolev and Young inequalities, we  get the following bounds
\begin{align}
I_{3}=\,&\int\langle \partial^{\alpha}\big(uf\big),\partial_{v} \partial^{\alpha} f\rangle\, {\rm d}x
\leq\, \| \partial^{\alpha}\big(uf\big)\|\|\nabla_{v} \partial^{\alpha} f\|,\nonumber\\
\leq\, & C\|\nabla u\|_{H^{3}}\|\nabla_{x}f\|_{L^{2}_{v}(H^{3}_{x})}\|\nabla_{v}\partial^{\alpha}f\|\nonumber\\
I_{4}\leq\,& C\| u\|_{H^{4}}\Big(\|\nabla \rho\|^{2}_{H^{3}}+\|\partial^{\alpha}\rho\|^{2}\Big),\nonumber\\
I_{5}\leq\,& C\| u\|_{H^{4}}\Big(\|\nabla u\|^{2}_{H^{3}}+\|\partial^{\alpha} u\|^{2}\Big),\nonumber\\
I_{6}\leq\,& C\| \rho\|_{H^{4}}\Big(\|\nabla \rho\|^{2}_{H^{3}}+\|\partial^{\alpha}\rho\|^{2}\Big),\nonumber
\end{align}

\begin{align}
I_{7}\leq\,& \Big\|\partial^{\alpha}\Big(\frac{au}{1+\rho}\Big)\Big\|\big\|\partial^{\alpha}u\big\|
\leq\, C \Big\|\nabla \Big(\frac{u}{1+\rho}\Big)\Big\|_{H^{3}}\big\|\nabla a\big\|_{H^{3}}\big\|\partial^{\alpha}u\big\|\nonumber\\
\leq\,& C\big(1+\|\rho\|^{2}_{H^{4}}\big)\|u\|_{H^{4}}\Big(\|\nabla a\|^{2}_{H^{3}}+\|\partial^{\alpha}u\|\Big),\nonumber\\
I_{8}\leq\,& \Big\|\partial^{\alpha}\Big(\frac{u\cdot b}{1+\rho}\Big)\Big\|\big\|\partial^{\alpha}\theta\big\|
\leq\, C \Big\|\nabla \Big(\frac{u}{1+\rho}\Big)\Big\|_{H^{3}}\big\|\nabla b\big\|_{H^{3}}\big\|\partial^{\alpha}\theta\big\|\nonumber\\
\leq\,& C\big(1+\|\rho\|^{2}_{H^{4}}\big)\|u\|_{H^{4}}\Big(\|\nabla b\|^{2}_{H^{3}}+\|\partial^{\alpha}\theta\|\Big),\nonumber\\
I_{9}\leq\,& \Big\|\partial^{\alpha}\Big(\frac{a \theta}{1+\rho}\Big)\Big\|\big\|\partial^{\alpha}\theta\big\|
\leq\, C \Big\|\nabla \Big(\frac{\theta}{1+\rho}\Big)\Big\|_{H^{3}}\big\|\nabla a\big\|_{H^{3}}\big\|\partial^{\alpha}\theta\big\|\nonumber\\
\leq\,& C\big(1+\|\rho\|^{2}_{H^{4}}\big)\|\theta\|_{H^{4}}\Big(\|\nabla a\|^{2}_{H^{3}}+\|\partial^{\alpha}\theta\|\Big),\nonumber
\end{align}

\begin{align}
I_{10}=\,& \sum_{1\leq\beta\leq\alpha}C_{\alpha,\beta}\int\partial^{\beta}\Big(\frac{\rho-\theta}{1+\rho}\partial^{\alpha-\beta}\nabla\rho\Big)\partial^{\alpha}u\, {\rm d}x
+\int\frac{\rho-\theta}{1+\rho}\nabla\partial^{\alpha}\rho\partial^{\alpha}u \, {\rm d}x \nonumber\\
=\,& \sum_{1\leq\beta\leq\alpha}C_{\alpha,\beta}\int\partial^{\beta}\Big(\frac{\rho-\theta}{1+\rho}\partial^{\alpha-\beta}\nabla\rho\Big)\partial^{\alpha}u\, {\rm d}x\nonumber\\
\,&-\int\frac{\rho-\theta}{1+\rho}\partial^{\alpha}\rho\partial^{\alpha}\dv u \, {\rm d}x
-\int\nabla\frac{\rho-\theta}{1+\rho}\partial^{\alpha}\rho\partial^{\alpha} u \, {\rm d}x\nonumber\\
\leq\,&C\big(1+\|\rho\|^{4}_{H^{4}}\big)\|(\rho,\theta)\|_{H^{4}}\big(\|\nabla \rho\|^{2}_{H^{3}}+\|\partial^{\alpha}u\|^{2}\big)\nonumber\\
\,&+C\|(\rho,\theta)\|_{H^{4}}\|\partial^{\alpha}\nabla u\|\|\partial^{\alpha}\rho\|
+C\|\rho\|_{H^{4}}\|(\rho,\theta)\|_{H^{4}}\|\partial^{\alpha}u\|\|\partial^{\alpha}\rho\|\nonumber\\
\leq\,& \varepsilon\|\partial^{\alpha}\nabla u\|^{2}+C_{\varepsilon}\big(1+\|\rho\|^{4}_{H^{4}}\big)
\|(\rho,\theta)\|_{H^{4}}\big(\|\nabla \rho\|^{2}_{H^{3}}+\|\partial^{\alpha}u\|^{2}\big),\nonumber\\
I_{11}\leq\,&\varepsilon\|\partial^{\alpha}\nabla u\|^{2}+C_{\varepsilon}\|\rho\|^{2}_{H^{4}}\|\partial^{\alpha}\rho\|^{2},\nonumber\\
I_{12}\leq\,&\varepsilon \|\nabla^{2}u\|^{2}_{H^{3}}+C_{\varepsilon}\|\theta\|^{2}_{H^{4}}\|\partial^{\alpha}\theta\|^{2}\,\nonumber\\
I_{13}\leq\,& C\|u\|_{H^{4}}\|\partial^{\alpha}\rho\|^{2},\nonumber\\
I_{14}\leq\,& C\|u\|_{H^{4}}\|\partial^{\alpha}u\|^{2},\nonumber\\
I_{15}\leq\,&\varepsilon \|\nabla^{2}\theta\|^{2}_{H^{3}}+C_{\varepsilon}\|u\|^{2}_{H^{4}}\|\partial^{\alpha}\theta\|^{2},\nonumber
\end{align}
with $\varepsilon>0$ a small constant, and
\begin{align}
I_{16}\leq\,& C\|\nabla \rho\|_{H^{2}}\|\nabla \partial^{\alpha}u\|\|\partial^{\alpha}u\|+C\big(\|\nabla\rho\|_{H^{3}}+\|\nabla\rho\|^{4}_{H^{3}}\big)\|\nabla u\|^{2}_{H^{3}}\nonumber\\
\leq\,&\varepsilon \|\nabla\partial^{\alpha}u\|^{2}+C_{\varepsilon}\big(\|\nabla\rho\|_{H^{3}}+\|\nabla\rho\|^{4}_{H^{3}}\big)\|\nabla u\|^{2}_{H^{3}},\nonumber\\
I_{17}\leq\,& C\|\nabla \rho\|_{L^{\infty}}\|\nabla \partial^{\alpha}u\|\|\partial^{\alpha}u\|\nonumber\\
\leq\,&\varepsilon \|\nabla\partial^{\alpha}u\|^{2}+C_{\varepsilon}\|\nabla\rho\|^{2}_{H^{3}}\|\partial^{\alpha} u\|^{2},\nonumber\\
I_{18}\leq\,&\varepsilon \|\nabla\partial^{\alpha}\theta\|^{2}+C_{\varepsilon}\big(\|\nabla\rho\|_{H^{3}}+\|\nabla\rho\|^{4}_{H^{3}}\big)\|\nabla \theta\|^{2}_{H^{3}},\nonumber\\
I_{19}\leq\,& \varepsilon \|\nabla\partial^{\alpha}\theta\|^{2}+C_{\varepsilon}\|\nabla\rho\|^{2}_{H^{3}}\|\partial^{\alpha} \theta\|^{2},\nonumber
\end{align}
\begin{align}
I_{20}\leq\,& \Big\|\partial^{\alpha}\frac{\rho(b-u)}{1+\rho}\Big\|\|\partial^{\alpha}u\|\leq\|\nabla \frac{\rho}{1+\rho}\|_{H^{3}}\|\nabla(b-u)\|_{H^{3}}\|\partial^{\alpha}u\|
\nonumber\\
\leq\,&C \big(1+\|\rho\|^{4}_{H^{4}}\big)\|\rho\|_{H^{4}}\Big(\sum_{1\leq|\alpha'|\leq4}\|\partial^{\alpha'}(b-u)\|\Big)\|\partial^{\alpha}u\|\nonumber\\
\leq\,& \varepsilon\sum_{1\leq|\alpha'|\leq4}\|\partial^{\alpha'}(b-u)\|^{2} +C_{\varepsilon}\big(1+\|\rho\|^{4}_{H^{4}}\big)^{2}\|\rho\|^{2}_{H^{4}}\|\partial^{\alpha}u\|^{2},\nonumber\\
I_{21}\leq\,& \varepsilon\sum_{1\leq|\alpha'|\leq4}\|\partial^{\alpha'}(\sqrt{2}\omega-\sqrt{3}\theta)\|^{2} +C_{\varepsilon}\big(1+\|\rho\|^{4}_{H^{4}}\big)^{2}\|\rho\|^{2}_{H^{4}}\|\partial^{\alpha}\theta\|^{2},\nonumber\\
I_{22}\leq\,&\varepsilon\sum_{1\leq|\alpha'|\leq4}\|\partial^{\alpha'}(b-u)\|^{2} +C_{\varepsilon}\big(1+\|\rho\|^{4}_{H^{4}}\big)^{2}\|u\|^{2}_{H^{4}}\|\partial^{\alpha}\theta\|^{2},\nonumber\\
I_{23}\leq\,& C\Big\|\nabla\frac{|u|^{2}}{1+\rho}\Big\|_{H^{3}}\|\nabla a\|_{H^{3}}\|\partial^{\alpha}\theta\|\nonumber\\
\leq\,&C\big(1+\|\rho\|^{4}_{H^{4}}\big)\|u\|^{2}_{H^{4}}\big(\|\nabla a\|^{2}_{H^{3}}+\|\partial^{\alpha}\theta\|^{2}\big).\nonumber
\end{align}

Substituting all  the above estimates on $I_{i}\,(1\leq i\leq 23)$ into \eqref{v2.12} and taking summation over $1\leq |\alpha|\leq 4$, we complete  \eqref{v2.8}.
\end{proof}

To estimate the energy dissipation of $(a,b,\omega)$, namely, $\|\nabla_{x}(a,b,\omega)\|_{H^{3}}$, we need to  deduce the equations satisfied by $(a,b,\omega)$.
We first introduce the moment functional and the energy functional as
\begin{align*}
\Gamma_{i,j}g =\langle (v_{i}v_{j}-1)\sqrt{M},g\rangle,\quad
Q_{i}g =\Big\langle\frac{1}{\sqrt{6}}v_{i}(|v|^{2}-3)M^{\frac{1}{2}},g\Big\rangle,
\end{align*}
for any $g=g(v),\,1\leq i,j\leq3$.
It is easy to verify  that  the  equations of $(a,b,\omega)$ is as follows
\begin{align}
&\partial_{t}a+\dv b=0,  \label{v2.13}  \\
&\partial_{t}b_{i}+\partial_{i}a +\frac{2}{\sqrt{6}}\partial_{i}\omega+\sum_{j=1}^{3}\partial_{x_{j}}\Gamma_{i,j}(\{\mathbf{I}-\mathbf{P}\}f)
=-b_{i}+u_{i}+u_{i}a,\label{v2.14}  \\
&\partial_{t}\omega+\sqrt{2}(\sqrt{2}\omega-\sqrt{3}\theta)-\sqrt{6}a\theta+\frac{2}{\sqrt{6}}\dv b-\frac{1}{\sqrt{6}}u\cdot b\nonumber\\
&\quad\,\,\,+\sum_{i=1}^{3}\partial_{x_{i}}Q_{i}(\{\mathbf{I}-\mathbf{P}\}f)
+\frac{1}{2}\sum_{i=1}^{3}u_{i}Q_{i}(\{\mathbf{I}-\mathbf{P}\}f)=0,\label{vx2.14}\\
&\partial_{j}b_{i}+\partial_{i}b_{j}-(u_{i}b_{j}+u_{j}b_{i})\nonumber\\
&\quad\,\,\,-\frac{2}{\sqrt{6}}\delta_{ij}
\Big(\frac{2}{\sqrt{6}}\dv b-\frac{1}{\sqrt{6}}u\cdot b+\sum_{i=1}^{3}\partial_{x_{i}}Q_{i}(\{\mathbf{I}-\mathbf{P}\}f)
+\frac{1}{2}\sum_{i=1}^{3}u_{i}Q_{i}(\{\mathbf{I}-\mathbf{P}\}f)\Big)\nonumber\\
&\quad\,\,\,=-\partial_{t}\Gamma_{i,j}\{\mathbf{I}-\mathbf{P}\}f +\Gamma_{i,j}(l+r+s), \label{v2.15}\\
&\frac{5}{3}\Big(\partial_{i}\omega+\omega u_{i}-\sqrt{6}\theta b_{i}\Big)-\frac{2}{\sqrt{6}}\sum_{j=1}^{3}\partial_{x_{j}}\Gamma_{i,j}(\{\mathbf{I}-\mathbf{P}\}f)
\nonumber\\
&\quad\,\,\,=-\partial_{t}Q_{i}\big(\{\mathbf{I}-\mathbf{P}\}f\big)+Q_{i}(l+r+s),\label{vx2.15}
\end{align}
where  $l,r,s$ are defined by
\begin{align}
&l:=-v\cdot \nabla_{x}\{\mathbf{I}-\mathbf{P}\}f+\mathcal{L}\{\mathbf{I}-\mathbf{P}\}f,\nonumber\\
&r:=-u\cdot\nabla_{v}\{\mathbf{I}-\mathbf{P}\}f+\frac{1}{2}u\cdot v\{\mathbf{I}-\mathbf{P}\}f,\nonumber\\
&s:=\theta M^{-\frac{1}{2}}\dv_{v}\Big(M^{\frac{1}{2}}(\nabla_{v}-\frac{v}{2})\{\mathbf{I}-\mathbf{P}\}f\Big).\nonumber
\end{align}
In fact,
  multiplying \eqref{v1.5} by $\sqrt{M},
\,v_i\sqrt{M}\,(1\leq i\leq 3),\,\frac{|v|^{2}-3}{\sqrt{6}}M^{\frac{1}{2}}$ respectively and then integrating  with respect to velocity  over $\mathbb{R}^3$, we deduce that \eqref{v2.13}-\eqref{vx2.14} hold .

In order to  get\eqref{v2.15}-\eqref{vx2.15},
we can  rewrite \eqref{v1.5} as
\begin{align}
   \partial_{t}\mathbf{P}f&+v\cdot \nabla_{x}\mathbf{P}f +u\cdot \nabla_{v}\mathbf{P}f-\frac{1}{2}u\cdot v\mathbf{P}f-u\cdot v M^{\frac{1}{2}}
   -\theta\big(|v|^{2}-3\big)M^{\frac{1}{2}}\nonumber\\
   &+\mathbf{P}_1f+2\mathbf{P}_2f-\theta M^{-\frac{1}{2}}\dv_{v}\Big(M^{\frac{1}{2}}(\nabla_{v}-\frac{v}{2})\mathbf{P}f\Big)\nonumber\\
   &=-\partial_t\{\mathbf{I}-\mathbf{P}\}f+l+r+s.\label{vxx2.15}
 \end{align}
Applying  $\Gamma_{ij}$ to \eqref{vxx2.15} and combining \eqref{v2.13}, \eqref{vx2.14}, we   get the equation \eqref{v2.15}.
 Similarly, applying $Q_{i}$ to \eqref{vxx2.15} and combing \eqref{v2.14}, we can obtain the equation \eqref{vx2.15}.

 Define a temporal functional
\begin{align}
\mathcal{E}_{0}(t):=\,&\sum_{|\alpha|\leq 3}\sum_{i,j}\int_{\mathbb{R}^{3}}\partial^{\alpha}
(\partial_{j}b_{i}+\partial_{i}b_{j})\partial^{\alpha}_{x}\Gamma_{i,j}(\{\mathbf{I}-\mathbf{P}\}f)\, {\rm d}x \nonumber\\
&+\sum_{|\alpha|\leq 3}\sum_{i}\int_{\mathbb{R}^{3}}\partial^{\alpha}\partial_{i}\omega\partial^{\alpha}Q_{i}\{\mathbf{I}-\mathbf{P}\}f\, {\rm d}x\nonumber\\
&+\frac{2}{21}\sum_{|\alpha|\leq 3}\int_{\mathbb{R}^{3}}\partial^{\alpha}_{x}a\partial^{\alpha}
\Big(\frac{\sqrt{6}}{5}\sum_{i}\partial_{i}Q_{i}\{\mathbf{I}-\mathbf{P}\}f-\dv b\Big) \, {\rm d}x. \label{v2.16}
\end{align}
We have the following estimate. 

\begin{Proposition}\label{vl2.4}
For classical solution of the system \eqref{v1.5}-\eqref{v1.8}, we have
\begin{align}
\frac{\rm d}{{\rm d}t}\mathcal{E}_{0}(t)&+\lambda \|\nabla_{x}(a,b,\omega)\|^{2}_{H^{3}}\nonumber\\
\leq&\, C\Big(\|\{\mathbf{I}-\mathbf{P}\}f\|^{2}_{L^{2}_{v}(H^{4}_{x})}
+\|u-b\|^{2}_{H^{3}}+\|\sqrt{2}\omega-\sqrt{3}\theta\|^{2}_{H^{3}}\Big)
\nonumber\\
& +C\big(\|u,\theta\|_{H^{3}}+\|u,\theta\|^{2}_{H^{3}}\big)\nonumber\\
&\quad \times\Big(\|\nabla_{x}\{\mathbf{I}-\mathbf{P}\}f\|^{2}_{L^{2}_{v}(H^{3}_{x})}+\|\nabla_{x}(a,b,\omega)\|^{2}_{H^{3}}\Big),\label{v2.17}
\end{align}
for all $0\leq t<T $ with any $T>0 .$
\end{Proposition}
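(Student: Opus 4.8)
The strategy is a standard ``macroscopic dissipation'' argument: the system \eqref{v2.13}--\eqref{vx2.15} for $(a,b,\omega)$ behaves like a degenerate elliptic (Stokes-type) system, so cross-testing the equations against suitable derivatives of the kinetic moments $\Gamma_{i,j}(\{\mathbf{I}-\mathbf{P}\}f)$ and $Q_i(\{\mathbf{I}-\mathbf{P}\}f)$ yields control of $\|\nabla_x(a,b,\omega)\|_{H^3}^2$ at the price of the microscopic norm and the ``relaxation'' quantities $\|u-b\|$, $\|\sqrt2\omega-\sqrt3\theta\|$. Concretely, I would fix $\alpha$ with $|\alpha|\le 3$, apply $\partial^\alpha$ to \eqref{v2.15} and \eqref{vx2.15}, and compute $\frac{\rm d}{{\rm d}t}\mathcal E_0(t)$ term by term. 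The time derivative landing on the $\partial^\alpha\partial_j b_i$ and $\partial^\alpha\partial_i\omega$ factors is rewritten using \eqref{v2.13} and \eqref{vx2.14} (so $\partial_t b$, $\partial_t\omega$ are replaced by spatial derivatives of $a,\omega,b$ plus relaxation terms plus microscopic moments); the time derivative on the $\Gamma_{i,j}$ and $Q_i$ factors is exactly the $-\partial_t\Gamma_{i,j}\{\mathbf{I}-\mathbf{P}\}f$ and $-\partial_t Q_i(\{\mathbf{I}-\mathbf{P}\}f)$ appearing on the right of \eqref{v2.15}--\eqref{vx2.15}, so it can be traded for $\Gamma_{i,j}(l+r+s)$, $Q_i(l+r+s)$ and the structural terms, with no genuine time derivative of $\{\mathbf{I}-\mathbf{P}\}f$ surviving.

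The algebraic heart of the argument is that, after this substitution, the ``good'' terms assemble into the full gradient. From \eqref{v2.15} one extracts $\sum_{i,j}\|\partial^\alpha(\partial_j b_i+\partial_i b_j)\|^2$, which by Korn's inequality (or just by expanding the symmetrized gradient on $\mathbb R^3$ and integrating by parts) controls $\|\nabla_x\partial^\alpha b\|^2$; from \eqref{vx2.15} one extracts $\|\partial^\alpha\partial_i\omega\|^2$, i.e.\ $\|\nabla_x\partial^\alpha\omega\|^2$; and the last, carefully weighted piece $\frac{2}{21}\int\partial^\alpha a\,\partial^\alpha(\frac{\sqrt6}{5}\sum_i\partial_iQ_i\{\mathbf I-\mathbf P\}f-\dv b)$ is designed so that, using $\partial_t a=-\dv b$ and the $b$-equation to produce $-\partial_i a$, its time derivative yields $\|\nabla_x\partial^\alpha a\|^2$ with a good sign — the coefficient $\tfrac{2}{21}$ and the factor $\tfrac{\sqrt6}{5}$ are exactly what makes the cross-terms with $\|\nabla b\|^2$ and $\|\nabla\omega\|^2$ absorbable after Cauchy--Schwarz and Young's inequality. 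Summing over $|\alpha|\le3$ then gives $\lambda\|\nabla_x(a,b,\omega)\|_{H^3}^2$ on the left.

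For the right-hand side, the terms to estimate fall into three groups. (i) The moments of the \emph{linear} pieces $l$ and $s$: since $\Gamma_{i,j}$ and $Q_i$ integrate $\{\mathbf I-\mathbf P\}f$ against polynomials times $\sqrt M$, and $l$ involves $v\cdot\nabla_x\{\mathbf I-\mathbf P\}f$ and $\mathcal L\{\mathbf I-\mathbf P\}f$ while $s$ involves $\theta$ times one extra $v$-derivative, these are bounded (after integrating by parts in $x$ to move one derivative off, and using the velocity weight $\nu(v)$ to absorb the polynomial growth) by $C\|\{\mathbf I-\mathbf P\}f\|^2_{L^2_v(H^4_x)}$ for the $l$-part, and by $C\|\theta\|_{H^3}\|\nabla_x\{\mathbf I-\mathbf P\}f\|^2_{L^2_v(H^3_x)}$ for the $s$-part — these feed the first and the last brackets of \eqref{v2.17} respectively. (ii) The relaxation/structural terms $u_i+u_ia$, $\sqrt2\omega-\sqrt3\theta$, $\sqrt6a\theta$, $u\cdot b$, $u_ib_j+u_jb_i$, $\omega u_i-\sqrt6\theta b_i$ coming from \eqref{v2.14}--\eqref{vx2.15} and from rewriting $\partial_t b,\partial_t\omega$: the linear ones give $\|u-b\|^2_{H^3}+\|\sqrt2\omega-\sqrt3\theta\|^2_{H^3}$, and the quadratic ones, via Lemma \ref{vl2.1} (Sobolev embedding, the $H^2$-algebra estimate \eqref{v2.4}, and \eqref{v2.5}) together with the a priori smallness \eqref{v2.2}, are bounded by $C(\|u,\theta\|_{H^3}+\|u,\theta\|^2_{H^3})(\|\nabla_x\{\mathbf I-\mathbf P\}f\|^2_{L^2_v(H^3_x)}+\|\nabla_x(a,b,\omega)\|^2_{H^3})$. (iii) The moments of $r=-u\cdot\nabla_v\{\mathbf I-\mathbf P\}f+\frac12 u\cdot v\{\mathbf I-\mathbf P\}f$: after integrating by parts in $v$ to remove $\nabla_v$ (which produces $v$-polynomial coefficients, still controlled by $\nu(v)$), these are $\le C\|u\|_{H^3}\|\{\mathbf I-\mathbf P\}f\|^2_{L^2_v(H^3_x)}$-type and land in the third bracket. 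Finally one uses $\varepsilon$-Young to absorb the small part of $\|\nabla_x(a,b,\omega)\|^2_{H^3}$ appearing on the right into the left-hand dissipation, and notes that $\mathcal E_0(t)$ itself is bounded by $C(\|\{\mathbf I-\mathbf P\}f\|^2_{L^2_v(H^4_x)}+\|\nabla_x(b,\omega)\|^2_{H^3})$ — harmless for the eventual combination with Propositions \ref{vl2.2}--\ref{vl2.3}, though not needed for \eqref{v2.17} as stated.

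The main obstacle I anticipate is bookkeeping rather than conceptual: one must verify that the specific linear combination in $\mathcal E_0$ really produces \emph{all three} of $\|\nabla a\|^2$, $\|\nabla b\|^2$, $\|\nabla\omega\|^2$ with a uniform good sign and that every cross-term (in particular those coupling $a$ to $b$ and $\omega$ through $Q_i\{\mathbf I-\mathbf P\}f$ and $\dv b$) is strictly dominated after Young's inequality by the diagonal terms plus the microscopic norm — this is precisely where the weights $\tfrac{2}{21}$ and $\tfrac{\sqrt6}{5}$ are forced. A secondary technical point is handling the $v$-integrations for the $s$- and $r$-moments: one must integrate by parts in $v$ carefully so that no uncontrolled $\nabla_v$ of a macroscopic quantity is left, using $\mathbf P(\nabla_v-\frac v2)\mathbf P f$-type identities as in the bound for $I_2$ in Proposition \ref{vl2.3}. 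Everything else is a routine application of Lemma \ref{vl2.1}, the coercivity \eqref{v2.1}, and the smallness \eqref{v2.2}.
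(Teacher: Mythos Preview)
Your plan is correct and matches the paper's own proof essentially step for step: the paper also splits the argument into three pieces (one each for $\|\nabla b\|$, $\|\nabla\omega\|$, $\|\nabla a\|$) using \eqref{v2.15}, \eqref{vx2.15}, and the combination of \eqref{v2.14} with \eqref{vx2.15} respectively, handles $\partial_t b_i$, $\partial_t\omega$, $\partial_t a$ exactly via \eqref{v2.13}--\eqref{vx2.14}, and uses the explicit identity $\sum_{i,j}\|\partial^\alpha(\partial_ib_j+\partial_jb_i)\|^2=2\|\nabla\partial^\alpha b\|^2+2\|\dv\partial^\alpha b\|^2$ rather than invoking Korn. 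Your classification of the right-hand side into $(l)$-, $(r,s)$- and relaxation/quadratic contributions, with the same bounds and the final $\varepsilon$-Young absorption, is precisely what the paper does.
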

\begin{proof}
We divide this proof into three steps.

{\bf Step 1:}
It follows from \eqref{v2.15} that
\begin{align}
&\sum_{i,j}\|\partial^{\alpha}(\partial_{i}b_{j}+\partial_{j}b_{i})\|^{2}-\sum_{i,j}\int\frac{2}{3}\delta_{ij}\partial^{\alpha}\dv b
\cdot\partial^{\alpha}\big(\partial_{i}b_{j}+\partial_{j}b_{i}\big)\, {\rm d}x \nonumber\\
&\,=-\frac{\rm d}{{\rm d}t}\sum_{i,j}\int\partial^{\alpha}(\partial_{i}b_{j}+\partial_{j}b_{i})\partial^{\alpha}\Gamma_{i,j}(\{\mathbf{I}-\mathbf{P}\}f)\, {\rm d}x \nonumber\\
&\,\quad+\sum_{i,j}\int\partial^{\alpha}(\partial_{i}\partial_{t}b_{j}+\partial_{j}\partial_{t}b_{i})\partial^{\alpha}\Gamma_{i,j}(\{\mathbf{I}-\mathbf{P}\}f)\, {\rm d}x \nonumber\\
&\,\quad+\sum_{i,j}\int\partial^{\alpha}(\partial_{i}b_{j}+\partial_{j}b_{i})\partial^{\alpha}
\Big[\big(u_{i}b_{j}+u_{j}b_{i}\big)+\frac{2}{\sqrt 6}\delta_{ij}\big(\sum_{k}\partial_{k}Q_{k}\{\mathbf{I-P}\}f\nonumber\\
&\,\quad+\frac{1}{2}\sum_{k}u_{k}Q_{k}\{\mathbf{I-P}\}f-\frac{1}{\sqrt{6}}u\cdot b\big)+\Gamma_{i,j}\big(l+r+s\big)\Big]\, {\rm d}x.\label{v2.18}
\end{align}
Using \eqref{v2.14}, Lemma \ref{vl2.1} and Young  inequality, we get
\begin{align}
&\!\!\!\!\!\!\!\!\!\!\!\!\!\!\!\!\!\!\!\!\!\!\!\!
\sum_{i,j}\int\partial^{\alpha}(\partial_{i}\partial_{t}b_{j}+\partial_{j}\partial_{t}b_{i})\partial^{\alpha}\Gamma_{i,j}(\{\mathbf{I}-\mathbf{P}\}f)\, {\rm d}x\nonumber\\
\leq\, & \varepsilon \|\nabla_{x}(a,\omega)\|^{2}_{H^{3}}
+C_{\varepsilon}\|\nabla_{x}\{\mathbf{I}-\mathbf{P}\}f\|^{2}_{L^{2}_{v}(H^{3}_{x})}\nonumber\\
&+C\big(\|u-b\|^{2}_{H^{3}}+\|u\|^{2}_{H^{3}}\|\nabla_{x}a\|^{2}_{H^{2}}\big), \nonumber
\end{align}
with $\varepsilon >0$  sufficiently small.

The  estimate  of the final term on the right hand side of \eqref{v2.18} is as follows,
\begin{align}
&\!\!\!
\sum_{i,j}\int\partial^{\alpha}(\partial_{i}b_{j}+\partial_{j}b_{i})\partial^{\alpha}
\Big[\big(u_{i}b_{j}+u_{j}b_{i}\big)+\frac{2}{\sqrt 6}\delta_{ij}\big(\sum_{k}\partial_{k}Q_{k}\{\mathbf{I-P}\}f\nonumber\\
&\,\quad+\frac{1}{2}\sum_{k}u_{k}Q_{k}\{\mathbf{I-P}\}f-\frac{1}{\sqrt{6}}u\cdot b\big)+\Gamma_{i,j}\big(l+r+s\big)\Big]\, {\rm d}x\nonumber\\
&\leq\, \frac{1}{4}\sum_{i,j}\|\partial^{\alpha}(\partial_{i}b_{j}
+\partial_{j}b_{i})\|^{2}
+C\sum_{i,j}\Big(\|\partial^{\alpha}\big(u_{i}b_{j}+u_{j}b_{i}\big)\|^{2}
+\|\partial^{\alpha}\partial_{k}Q_{k}\{\mathbf{I-P}\}f\|^{2}\nonumber\\
&\,\quad+\|\partial^{\alpha}\big(u_{k}Q_{k}\{\mathbf{I-P}\}f\big)\|^{2}+\|\partial^{\alpha}\big(u\cdot b\big)\|^{2}+\|\partial^{\alpha}\Gamma_{i,j}(l+r+s)\|^{2}\Big),\nonumber\\
&\leq\, \frac{1}{4}\sum_{i,j}\|\partial^{\alpha}(\partial_{i}b_{j}
+\partial_{j}b_{i})\|^{2}+C\|\{\mathbf{I-P}\}f\|^{2}_{L^{2}_{v}(H^{4})}\nonumber\\
&\,\quad+C\|(u,\theta)\|^{2}_{H^{3}}\big(\|\nabla b\|^{2}_{H^{3}}+\|\nabla_{x} \{\mathbf{I-P}\}f \|^{2}_{L^{2}_{v}(H^{3})}\big).\nonumber
\end{align}
Here, we have used the following estimates  
\begin{align}
\sum_{i,j}\|\partial^{\alpha}(u_{i}b_{j}+u_{j}b_{i})\|^{2}+\|\partial^{\alpha}(u\cdot b)\|^{2}&\leq C\|u\|^{2}_{H^{3}}\|\nabla_{x}b\|^{2}_{H^{3}},\nonumber\\
\|\partial^{\alpha}\partial_{k}Q_{k}\{\mathbf{I-P}\}f\|^{2}&\leq C\|\nabla_{x}\{\mathbf{I-P}\}f\|^{2}_{L^{2}_{v}(H^{3})},\nonumber\\
\|\partial^{\alpha}\big(u_{k}Q_{k}\{\mathbf{I-P}\}f\big)\|^{2}&\leq C\|u\|^{2}_{H^{3}}\|\nabla_{x}\{\mathbf{I-P}\}f\|^{2}_{L^{2}_{v}(H^{3})},\nonumber\\
\sum_{i,j}\|\partial^{\alpha}\Gamma_{i,j}(l)\|^{2}&\leq C\|\{\mathbf{I}-\mathbf{P}\}f\|^{2}_{L^{2}_{v}(H^{4})},\nonumber\\
\sum_{i,j}\|\partial^{\alpha}\Gamma_{i,j}(r)\|^{2}&\leq C\|u\|^{2}_{H^{3}}\|\nabla_{x}\{\mathbf{I}-\mathbf{P}\}f\|^{2}_{L^{2}_{v}(H^{3})},\nonumber\\
\sum_{i,j}\|\partial^{\alpha}\Gamma_{i,j}(s)\|^{2}&\leq C\|\theta\|^{2}_{H^{3}}\|\nabla_{x}\{\mathbf{I}-\mathbf{P}\}f\|^{2}_{L^{2}_{v}(H^{3})}.\nonumber
\end{align}
Notice that
\begin{align}
\sum_{i,j}\|\partial^{\alpha}(\partial_{i}b_{j}+\partial_{j}b_{i})\|^{2}=2\|\nabla_{x}\partial^{\alpha}b\|^{2}+2\|\nabla_{x}\cdot\partial^{\alpha}b\|^{2},\nonumber\\
\sum_{i,j}\frac{2}{3}\delta_{ij}\int\partial^{\alpha}\dv b\partial^{\alpha}\big(\partial_{i}b_{j}+\partial_{j}b_{i}\big)\, {\rm d}x
=\frac{4}{3}\|\partial^{\alpha}\dv b\|^{2}.\nonumber
\end{align}
By the above equality, we  substitute the above estimates into \eqref{v2.18}, and then the sum  over $|\alpha|\leq 3$ is obtained as
\begin{align}
&\frac{\rm d}{{\rm d}t}\sum_{|\alpha|\leq3}\sum_{i,j}\int\partial^{\alpha}(\partial_{i}b_{j}+\partial_{j}b_{i})\partial^{\alpha}\Gamma_{i,j}(\{\mathbf{I}-\mathbf{P}\}f)
\, {\rm d}x\nonumber\\
&\,\quad+\sum_{|\alpha|\leq3}\Big(\frac{3}{2}\|\nabla_{x}\partial^{\alpha}b\|^{2}+\frac{1}{6}\|\nabla_{x}\cdot\partial^{\alpha}b\|^{2}\Big)\nonumber\\
&\,\leq\varepsilon \|\nabla \big(a,\omega\big)\|^{2}_{H^{3}}+C_{\varepsilon}\|\{\mathbf{I}-\mathbf{P}\}f\|^{2}_{L^{2}_{x}(H^{4}_{x})}
+C\|u-b\|^{2}_{H^{3}}\nonumber\\
&\,\quad+C\|(u,\theta)\|^{2}_{H^{3}}
\cdot\Big(\|\nabla_{x}\{\mathbf{I}-\mathbf{P}\}f\|^{2}_{L^{2}_{v}(H^{3}_{x})}+\|\nabla_{x}(a,b)\|^{2}_{H^{3}}\Big),\label{v2.19}
\end{align}
with $\varepsilon >0$  sufficiently small.

{\bf Step 2:}
According to \eqref{vx2.15}, through direct calculation, we obtain
\begin{align}
\|\partial^{\alpha}\partial_{i}\omega\|^{2}=\,&-\frac{3}{5}\frac{\rm d}{{\rm d}t}\int\partial^{\alpha}\partial_{i}\omega
\partial^{\alpha}Q_{i}\{\mathbf{I}-\mathbf{P}\}f\, {\rm d}x+\frac{3}{5}\int\partial^{\alpha}\partial_{i}\partial_{t}\omega
\partial^{\alpha}Q_{i}\{\mathbf{I}-\mathbf{P}\}f\, {\rm d}x\nonumber\\
&+\int\partial^{\alpha}\partial_{i}\omega
\partial^{\alpha}\big(\omega u_{i}+\sqrt{6}\theta b_{i}\big)\, {\rm d}x+
\int\partial^{\alpha}\partial_{i}\omega\partial^{\alpha}Q_{i}(l+r+s)\, {\rm d}x\nonumber\\
&+\frac{\sqrt{6}}{5}\int\partial^{\alpha}\partial_{i}\omega
\partial^{\alpha}\big(\sum_{j}\partial_{j}\Gamma_{i,j}\{\mathbf{I}-\mathbf{P}\}f\big)\, {\rm d}x.\label{v2.20}
\end{align}
By means of \eqref{vx2.14} and direct calculation, one gets
\begin{align}
&\int\partial^{\alpha}\partial_{i}\partial_{t}\omega
\partial^{\alpha}Q_{i}\{\mathbf{I}-\mathbf{P}\}f\, {\rm d}x\nonumber\\
&\,=\int\partial_{i}\partial^{\alpha}\Big(\sqrt{6}a\theta-\sqrt{2}(\sqrt{2}\omega-\sqrt{3}\theta)-\frac{2}{\sqrt{6}}\dv b+\frac{1}{\sqrt{6}}u\cdot b \nonumber\\
&\,\quad-\sum_{k}\partial_{k}Q_{k}\{\mathbf{I}-\mathbf{P}\}f-\frac{1}{2}\sum_{k}u_{k}Q_{k}\{\mathbf{I}-\mathbf{P}\}f
\Big)\cdot\partial^{\alpha}Q_{i}\{\mathbf{I}-\mathbf{P}\}f\, {\rm d}x\nonumber\\
&\,\leq \varepsilon \|\nabla b\|^{2}_{H^{3}}+C_{\varepsilon}\|\{\mathbf{I}-\mathbf{P}\}f\|^{2}_{L^{2}_{v}(H^{4})}+C\|\sqrt{2}\omega-\sqrt{3}\theta\|^{2}_{H^{3}}\nonumber\\
&\,\quad+C\big(\|(u,\theta)\|_{H^{3}}+\|u\|^{2}_{H^{3}}\big)\cdot\Big(\|\nabla(a,b)\|^{2}_{H^{3}}+\|\nabla_{x}\{\mathbf{I}-\mathbf{P}\}f\|^{2}_{L^{2}_{v}(H^{3})}\Big).\nonumber
\end{align}
For the remaining  terms on the right hand side of \eqref{v2.20}, we have
\begin{align}
\int\partial^{\alpha}\partial_{i}\omega
\partial^{\alpha}\big(\omega u_{i}& +\sqrt{6}\theta b_{i}\big)\, {\rm d}x
+\int\partial^{\alpha}\partial_{i}\omega\partial^{\alpha}Q_{i}(l+r+s)\, {\rm d}x  \nonumber\\
&+\frac{\sqrt{6}}{5}\int\partial^{\alpha}\partial_{i}\omega
\partial^{\alpha}\big(\sum_{j}\partial_{j}\Gamma_{i,j}\{\mathbf{I}-\mathbf{P}\}f\big)\, {\rm d}x\nonumber\\
\leq& \frac{2}{5}\|\partial^{\alpha}\partial_{i}\omega\|^{2}+C\|\{\mathbf{I}-\mathbf{P}\}f\|^{2}_{L^{2}_{v}(H^{4})}\nonumber\\
&+C\|(u,\theta)\|^{2}_{H^{3}}\Big(\|\nabla(b,\omega)\|^{2}_{H^{3}}+\|\nabla_{x}\{\mathbf{I}-\mathbf{P}\}f\|^{2}_{L^{2}_{v}(H^{3})}\Big).\nonumber
\end{align}
Thus, substituting the above estimates into \eqref{v2.20}, and then taking summation over $|\alpha|\leq 3, 1\leq i\leq 3$, we obtain
\begin{align}
\frac{\rm d}{{\rm d}t}&\sum_{\alpha,i}\int\partial^{\alpha}\partial_{i}\omega
\partial^{\alpha}Q_{i}\{\mathbf{I}-\mathbf{P}\}f\, {\rm d}x+\sum_{\alpha}\|\partial^{\alpha}\nabla \omega\|^{2}\nonumber\\
&\leq \varepsilon \|\nabla b\|^{2}_{H^{3}}+C_{\varepsilon}\|\{\mathbf{I}-\mathbf{P}\}f\|^{2}_{L^{2}_{v}(H^{4})}+C\|\sqrt{2}\omega-\sqrt{3}\theta\|^{2}_{H^{3}}\nonumber\\
&\,\quad +C\big(\|(u,\theta)\|_{H^{3}}+\|(u,\theta)\|^{2}_{H^{3}}\big)\nonumber\\
&\,\quad\times \big(\|\nabla(a,b,\omega)\|^{2}_{H^{3}}
+\|\nabla_{x}\{\mathbf{I}-\mathbf{P}\}f\|^{2}_{L^{2}_{v}(H^{3})}\big),\label{vx2.20}
\end{align}
with $\varepsilon >0$  sufficiently small.

{\bf Step 3:}
Making full use of the equations \eqref{v2.14} and \eqref{vx2.15}, by direct calculations, we obtain  the following equality
\begin{align}
\|\partial^{\alpha}\partial_{i}a\|^{2}=& \frac{\rm d}{{\rm d}t}\int \partial^{\alpha}\partial_{i}a
 \partial^{\alpha}\Big(\frac{\sqrt{6}}{5}Q_{i}\{\mathbf{I}-\mathbf{P}\}f-b_{i}\Big)\, {\rm d}x\nonumber\\
 &-\int \partial^{\alpha}\partial_{i}a_{t}
 \partial^{\alpha}\Big(\frac{\sqrt{6}}{5}Q_{i}\{\mathbf{I}-\mathbf{P}\}f-b_{i}\Big)\, {\rm d}x\nonumber\\
 &+\int \partial^{\alpha}\partial_{i}a
 \partial^{\alpha}\Big((u_{i}-b_{i})+u_{i}a-\frac{2}{\sqrt{6}}\omega u_{i}-2\theta b_{i}\nonumber\\
 &-\frac{7}{5}\sum_{j}\partial_{j}\Gamma_{ij}\{\mathbf{I}-\mathbf{P}\}f-\frac{\sqrt{6}}{5}Q_{i}(l+r+s)\Big)\, {\rm d}x.
\label{vxx2.20}
\end{align}
Using \eqref{v2.13}, one has
\begin{align}
-&\int \partial^{\alpha}\partial_{i}a_{t}
 \partial^{\alpha}\Big(\frac{\sqrt{6}}{5}Q_{i}\{\mathbf{I}-\mathbf{P}\}f-b_{i}\Big)\, {\rm d}x\nonumber\\
&=\int \partial^{\alpha}\dv b
 \partial^{\alpha}\Big(b_{i}-\frac{\sqrt{6}}{5}Q_{i}\{\mathbf{I}-\mathbf{P}\}f\Big)\, {\rm d}x\nonumber\\
&\leq \int \partial^{\alpha}\dv b \partial^{\alpha} \partial_{i} b_{i}\, {\rm d}x +\frac{1}{4}\|\partial^{\alpha}\dv b\|^{2}
+C\|\{\mathbf{I}-\mathbf{P}\}f\|^{2}_{L^{2}_{v}(H^{4})}.\nonumber
\end{align}
For other terms, the following estimate  is obtained:
\begin{align}
\int &\partial^{\alpha}\partial_{i}a
 \partial^{\alpha}\Big((u_{i}-b_{i})+u_{i}a-\frac{2}{\sqrt{6}}\omega u_{i}-2\theta b_{i}\nonumber\\
 &-\frac{7}{5}\sum_{j}\partial_{j}\Gamma_{ij}\{\mathbf{I}-\mathbf{P}\}f-\frac{\sqrt{6}}{5}Q_{i}(l+r+s)\Big)\, {\rm d}x\nonumber\\
\leq & \frac{1}{4}\|\partial^{\alpha}\partial_{i}a\|^{2}+C\Big(
\|\partial^{\alpha}(u_{i}-b_{i})\|^{2}+\|\partial^{\alpha}\big(u_{i}a-\frac{2}{\sqrt{6}}\omega u_{i}-2\theta b_{i}\big)\|^{2}\nonumber\\
&+\sum_{j}\|\partial^{\alpha}\partial_{j}\Gamma_{ij}\{\mathbf{I}-\mathbf{P}\}f\|^{2}+\|\partial^{\alpha}Q_{i}(l+r+s)\|^{2}
\Big)\nonumber\\
\leq &\frac{1}{4}\|\partial^{\alpha}\partial_{i}a\|^{2}+C\|u-b\|^{2}_{H^{3}}+C\|\{\mathbf{I}-\mathbf{P}\}f\|^{2}_{L^{2}_{v}(H^{4})}\nonumber\\
&+C\|(u,\theta)\|^{2}_{H^{3}}\cdot \Big(\|\nabla(a,b,\omega)\|^{2}_{H^{3}}+\|\nabla_{x}\{\mathbf{I}-\mathbf{P}\}f\|^{2}_{L^{2}_{v}(H^{3})}\Big).\nonumber
\end{align}
Thus, substituting the above estimates into \eqref{vxx2.20}, and then taking summation over $|\alpha|\leq 3$, we have
\begin{align}
\frac{\rm d}{{\rm d}t}&\sum_{|\alpha|\leq 3}\int \frac{2}{21} \partial^{\alpha} a \partial^{\alpha}\Big(\sum_{k}Q_{k}\{\mathbf{I}-\mathbf{P}\}f-\dv b\Big)\, {\rm d}x
+\frac{1}{14}\sum_{|\alpha|\leq 3}\|\partial^{\alpha}\nabla a\|^{2}\nonumber\\
&\leq \frac{1}{6}\sum_{|\alpha|\leq 3}\|\partial^{\alpha}\dv b\|^{2}+C\|u-b\|^{2}_{H^{3}}+C\|\{\mathbf{I}-\mathbf{P}\}f\|^{2}_{L^{2}_{v}(H^{4})}\nonumber\\
&\quad+C\|(u,\theta)\|^{2}_{H^{3}}\cdot \Big(\|\nabla(a,b,\omega)\|^{2}_{H^{3}}+\|\nabla_{x}\{\mathbf{I}-\mathbf{P}\}f\|^{2}_{L^{2}_{v}(H^{3})}\Big)\label{v2.21}
\end{align}

Integrating the estimates of the above three parts  \eqref{v2.19}, \eqref{vx2.20} and \eqref{v2.21},
and applying the definition of $\mathcal{E}_{0}(t)$, we conclude that  \eqref{v2.17} holds.
\end{proof}

\begin{Proposition}\label{vl2.5}
For classical solution of the system \eqref{v1.5}-\eqref{v1.8}, we have
\begin{align}
\frac{\rm d}{{\rm d}t}&\sum_{|\alpha|\leq 3}\int_{\mathbb{R}^{3}}\partial^{\alpha}u\cdot\partial^{\alpha}\nabla \rho \, {\rm d}x + \lambda \|\nabla \rho\|^{2}_{H^{3}}\nonumber\\
&\leq C\big(\|u-b\|^{2}_{H^{3}}+\|\nabla u\|^{2}_{H^{4}}+\|\nabla \theta\|^{2}_{H^{3}}\big)\nonumber\\
&\quad +C\big(1+\|\rho\|^{6}_{H^{3}}\big)\Big(\|\rho\|_{H^{4}}+\|(u,\theta)\|^{2}_{H^{3}}\Big)\Big(\|\nabla(a,\rho,u)\|^{2}_{H^{3}}+\|u-b\|^{2}_{H^{3}}\Big)\label{v2.22}
\end{align}
for all $0\leq t<T $ with any $T>0 .$
\end{Proposition}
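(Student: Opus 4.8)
The estimate is of the standard ``interaction-functional'' type: the dissipation of $\nabla\rho$, which is absent from the fluid energy, is recovered from the pressure term in the momentum equation, exactly as in the isentropic analysis of \cite{LMW}. Set
$$
\mathcal{J}(t):=\sum_{|\alpha|\leq 3}\int \partial^{\alpha}u\cdot\partial^{\alpha}\nabla\rho\,{\rm d}x .
$$
Using the differentiated equations \eqref{v2.10} and \eqref{v2.11}, I would compute
$$
\frac{\rm d}{{\rm d}t}\mathcal{J}(t)=\sum_{|\alpha|\leq 3}\int\partial_{t}\partial^{\alpha}u\cdot\partial^{\alpha}\nabla\rho\,{\rm d}x+\sum_{|\alpha|\leq 3}\int\partial^{\alpha}u\cdot\partial^{\alpha}\nabla\partial_{t}\rho\,{\rm d}x .
$$
Substituting $\partial_{t}\partial^{\alpha}u$ from \eqref{v2.11}, the term $-\nabla\partial^{\alpha}\rho$ on the left of \eqref{v2.11} contributes $-\sum_{|\alpha|\le3}\|\partial^{\alpha}\nabla\rho\|^{2}$, which after moving to the left-hand side is precisely the desired $\lambda\|\nabla\rho\|^{2}_{H^{3}}$. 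Substituting $\partial_{t}\rho=-\dv u-(\text{l.o.t.})$ from \eqref{v2.10} into the second sum and integrating by parts, the leading piece is $\sum_{|\alpha|\le3}\|\dv\partial^{\alpha}u\|^{2}\le C\|\nabla u\|^{2}_{H^{3}}$, which is already of the admissible form in \eqref{v2.22}.

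\textbf{Estimating the remaining terms.} Everything else is bounded by Cauchy--Schwarz, Young's inequality and Lemma \ref{vl2.1}. The linear cross terms, coming from $\nabla_{x}\theta$, the principal part $\Delta\partial^{\alpha}u$ of $\partial^{\alpha}(\frac1{1+\rho}\Delta u)$, and $\partial^{\alpha}(b-u)$, each produce $\varepsilon\|\partial^{\alpha}\nabla\rho\|^{2}$ (absorbed once $\varepsilon$ is small) plus $C_{\varepsilon}\big(\|\nabla\theta\|^{2}_{H^{3}}+\|\nabla u\|^{2}_{H^{4}}+\|u-b\|^{2}_{H^{3}}\big)$; note $\|\Delta\partial^{\alpha}u\|$ with $|\alpha|\le3$ involves five derivatives of $u$, which is why $\|\nabla u\|_{H^{4}}$ appears. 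The genuinely nonlinear contributions --- the convective terms $\partial^{\alpha}(u\cdot\nabla u)$, the commutator $[-\partial^{\alpha},u\cdot\nabla_{x}]u$, the terms generated by $\partial^{\alpha}(\frac1{1+\rho}ua)$, $\partial^{\alpha}(\frac{\rho-\theta}{1+\rho}\nabla\rho)$, the subleading part of $\partial^{\alpha}(\frac1{1+\rho}\Delta u)$, and the ones coming from $\partial^{\alpha}\nabla(u\cdot\nabla\rho)$ and $\partial^{\alpha}\nabla(\rho\dv u)$ --- are treated by Moser-type product estimates, using that each $\partial^{\beta}\big(\frac1{1+\rho}\big)$ is a polynomial in $\rho$ and its derivatives divided by a power of $1+\rho$, which is what generates the factor $\big(1+\|\rho\|^{6}_{H^{3}}\big)$; the a priori smallness \eqref{v2.2} then keeps all coefficients of $\|\nabla(a,\rho,u)\|^{2}_{H^{3}}+\|u-b\|^{2}_{H^{3}}$ under control in the stated form.

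\textbf{The delicate point.} The single term that cannot be made small by choosing $\varepsilon$ is the self-interaction of $\partial^{\alpha}\big(\frac{\rho-\theta}{1+\rho}\nabla\rho\big)$ with $\partial^{\alpha}\nabla\rho$ in which all derivatives land on $\nabla\rho$, namely $\int\frac{\rho-\theta}{1+\rho}\,|\partial^{\alpha}\nabla\rho|^{2}\,{\rm d}x$; this is bounded by $C\big\|\tfrac{\rho-\theta}{1+\rho}\big\|_{L^{\infty}}\|\nabla\rho\|^{2}_{H^{3}}\le C\delta\|\nabla\rho\|^{2}_{H^{3}}$ via \eqref{v2.2} and Lemma \ref{vl2.1}, and for $\delta$ sufficiently small it too is absorbed into the left-hand dissipation. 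Summing over $0\le|\alpha|\le3$, choosing $\varepsilon$ and then $\delta$ small, and recalling the definition of $\mathcal{J}(t)$ yields \eqref{v2.22}. The main obstacle is precisely the bookkeeping for the quotients $\frac{1+\theta}{1+\rho}$ and $\frac1{1+\rho}$: one must isolate the one contribution giving the clean dissipation $\|\nabla\rho\|^{2}_{H^{3}}$, separate the single absorbable $O(\delta)$ self-interaction above, and check that every other piece is a product of a small norm with quantities already on the right-hand side of \eqref{v2.22}, carrying the correct polynomial weight in $\|\rho\|_{H^{3}}$.
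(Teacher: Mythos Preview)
Your proposal is correct and follows essentially the same route as the paper: the paper applies $\partial^{\alpha}$ ($|\alpha|\le 3$) to \eqref{v1.7}, multiplies by $\nabla\partial^{\alpha}\rho$ to produce $\|\nabla\partial^{\alpha}\rho\|^{2}$, rewrites $-\int\nabla\partial^{\alpha}\rho\cdot\partial^{\alpha}\partial_{t}u$ as the interaction-functional derivative plus $\|\partial^{\alpha}\dv u\|^{2}$ via \eqref{v1.6}, and then bounds the remaining linear and nonlinear pieces by Young's inequality together with the product estimates of Lemma~\ref{vl2.1}. The only cosmetic difference is that the paper does not single out the self-interaction $\int\frac{\rho-\theta}{1+\rho}|\partial^{\alpha}\nabla\rho|^{2}$ for absorption on the left but instead keeps the whole contribution $\big\|\frac{\rho-\theta}{1+\rho}\nabla\rho\big\|^{2}_{H^{3}}$ on the right as a small-coefficient term, which amounts to the same bookkeeping.
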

\begin{proof}
Applying $\partial^\alpha \,(|\alpha|\leq 3)$ to \eqref{v1.7}, and  carrying a  direct calculation, we achieve
\begin{align}
\|\nabla \partial^{\alpha} \rho\|^{2}
\,=&-\int\nabla \partial^{\alpha} \rho \partial^{\alpha}\partial_{t} u\, {\rm d}x  + \int\nabla \partial^{\alpha} \rho \partial^{\alpha}
\Big(\frac{1}{1+\rho}(b-u)-\nabla \theta\Big)\, {\rm d}x \nonumber\\
& +\int\nabla \partial^{\alpha} \rho \partial^{\alpha}\Big\{-u\cdot \nabla u+\frac{1}{1+\rho}\Delta u
+\frac{\rho-\theta}{1+\rho}\nabla \rho-\frac{1}{1+\rho}u a\Big\}\, {\rm d}x \nonumber\\
:=\,& Y_{1}+Y_{2}+Y_{3}. \label{v2.23}
\end{align}
For $Y_{i}\, (i=1,2,3)$, using \eqref{v1.6}, H\"{o}lder, Sobolev and Young inequalities yields
\begin{align}
Y_{1}=\,&-\frac{\rm d}{{\rm d}t}\int\nabla \partial^{\alpha} \rho \partial^{\alpha}u \, {\rm d}x
+\int \partial^{\alpha}\ \dv u \partial^{\alpha}\Big((1+\rho)\ \dv u+u\cdot \nabla \rho\Big)\, {\rm d}x \nonumber\\
\quad\leq\,& -\frac{\rm d}{{\rm d}t}\int\nabla \partial^{\alpha} \rho \partial^{\alpha}u \, {\rm d}x
+C\|\partial^{\alpha}\ \dv u\|^{2}+C\|\rho\|_{H^{4}}\|\nabla u\|^{2}_{H^{3}},\nonumber\\
Y_{2}\leq\, &\frac{1}{8}\|\nabla\partial^{\alpha}\rho\|^{2}+ C\|(u-b,\nabla \theta)\|^{2}_{H^{3}}+C\|\rho\|^{6}_{H^{3}}\|b-u\|^{2}_{H^{3}},\nonumber\\
Y_{3}\leq\,& \frac{1}{8}\|\nabla\partial^{\alpha}\rho\|^{2}+C\|u\cdot \nabla u\|^{2}_{H^{3}}
+C\Big\|\frac{1}{1+\rho}\Delta u\Big\|^{2}_{H^{3}}\nonumber\\
& +C\Big\|\frac{\rho-\theta}{1+\rho}\nabla \rho\Big\|^{2}_{H^{3}}+C\Big\|\frac{1}{1+\rho}u a\Big\|^{2}_{H^{3}}\nonumber\\
\leq\,& \frac{1}{8}\|\nabla\partial^{\alpha}\rho\|^{2} +C\|\nabla u\|^{2}_{H^{4}}
+C\big(1+\|\rho\|^{6}_{H^{3}}\big)\|(\rho,u,\theta)\|^{2}_{H^{3}}\|\nabla(a,\rho,u)\|^{2}_{H^{3}}.\nonumber
\end{align}
With the help of \eqref{v2.2}, substituting the above estimates into \eqref{v2.23}, we obtain \eqref{v2.22}.
\end{proof}
Now let us define  $\mathcal{E}_{1}(t)$ and  $\mathcal{D}_{1}(t)$ by
\begin{align}
\mathcal{E}_{1}(t):=\,&\|f\|^{2} +\|(\rho,u,\theta)\|^{2}+\sum_{1\leq |\alpha|\leq 4}\bigg\{\|\partial^{\alpha}f\|^{2}
 +\|\big(\partial^{\alpha}\rho,\partial^{\alpha}u,\partial^{\alpha}\theta\big)\|^{2}
\bigg\}\nonumber\\
&+\tau_{1}\mathcal{E}_{0}(t)+\tau_{2}\sum_{|\alpha|\leq 3}\int_{\mathbb{R}^{3}}\partial^{\alpha}u\cdot\partial^{\alpha}\nabla \rho \, {\rm d}x  ,\nonumber\\
\mathcal{D}_{1}(t):=\,&\|\nabla (a,b,\omega,\rho,u,\theta)\|^{2}_{H^{3}}+\|(b-u,\sqrt{2}\omega-\sqrt{3}\theta)\|^{2}_{H^{4}}\nonumber\\
&+\sum_{|\alpha|\leq 4}\Big(\|\partial^{\alpha}\{\mathbf{I}-\mathbf{P}\}f\|^{2}_{\nu}+\|\partial^{\alpha}(\nabla u,\nabla \theta)\|^{2}\Big),\nonumber
\end{align}
 where $\mathcal{E}_{1}(t),\mathcal{D}_{1}(t) $ denote the temporal energy functional and the corresponding dissipation rate respectively,  $\tau_{1}$ and $\tau_{2}$ are sufficiently small constants  and will  be determined later.
 Reorganizing the above estimates  in  Propositions \ref{vl2.2}-\ref{vl2.5}, we obtain
 \begin{align}
\frac{\rm d}{{\rm d}t}&\mathcal{E}_{1}(t)+\lambda \mathcal{D}_{1}(t)\nonumber\\
&\leq\, C\Big(1+\|\rho\|^{8}_{H^{4}}\Big)\cdot\Big(\|(\rho,u,\theta)\|_{H^{4}}+\|(\rho,u,\theta)\|^{2}_{H^{4}}
\Big)
 \cdot\Big(\|\nabla (a,b,\omega,\rho,u,\theta)\|^{2}_{H^{3}}\nonumber\\
&\,\quad+\|\big(b-u,\sqrt{2}\omega-\sqrt{3}\theta\big)\|^{2}_{H^{4}}+\sum_{|\alpha|\leq 4}\|\partial^{\alpha}\{\mathbf{I}-\mathbf{P}\}f\|^{2}_{\nu}\Big).\label{v2.24}
\end{align}

\subsubsection{ Energy estimates for mixed space-velocity derivatives}
In this subsection, we shall deal with the energy estimates for the mixed space-velocity derivatives
of $f$, i.e., $\partial^\alpha_\beta f$.
Since $\|\partial^{\alpha}_{\beta}\mathbf{P}f\|\leq C \|\partial^{\alpha}f\| $ for any $\alpha$ and $\beta$,
 thus, it is enough  to  estimate $\|\partial^{\alpha}_{\beta}\{\mathbf{I}-\mathbf{P}\}f\|$ below.

First,we have the following facts
\begin{align}
& \{\mathbf{I}-\mathbf{P}\}(g\cdot h)=g\cdot \{\mathbf{I}-\mathbf{P}\}f- \mathbf{P}\big(g\cdot \{\mathbf{I}-\mathbf{P}\}f\big)+\{\mathbf{I}-\mathbf{P}\}\big(g\cdot \mathbf{P} f \big),\nonumber\\
 &\{\mathbf{I}-\mathbf{P}\}( v\sqrt{M})= 0 ,\quad \{\mathbf{I}-\mathbf{P}\}\big( (|v|^{2}-3)\sqrt{M}\big)= 0,\nonumber\\
 & \{\mathbf{I}-\mathbf{P}\}\mathcal{L}f =\mathcal{L}\{\mathbf{I}-\mathbf{P}\}f,\nonumber
\end{align}
 then, using $\{\mathbf{I}-\mathbf{P}\}$ to the equality \eqref{v1.5}, one gets
\begin{align}
&\partial_{t}\{\mathbf{I}-\mathbf{P}\}f+v\cdot \nabla_{x}\{\mathbf{I}-\mathbf{P}\}f+u\cdot \nabla_{v}\{\mathbf{I}-\mathbf{P}\}f
-\frac{1}{2}u\cdot v\{\mathbf{I}-\mathbf{P}\}f\nonumber\\
&\quad-\theta M^{-\frac{1}{2}}\Delta _{v}\big( M^{\frac{1}{2}}\{\mathbf{I}-\mathbf{P}\}f\big)-\mathcal{L}\{\mathbf{I}-\mathbf{P}\}f\nonumber\\
&=\mathbf{P}\Big\{v\cdot \nabla_{x}\{\mathbf{I}-\mathbf{P}\}f+u\cdot \nabla_{v}\{\mathbf{I}-\mathbf{P}\}f-\frac{1}{2}u\cdot v\{\mathbf{I}-\mathbf{P}\}f\nonumber\\
&\quad-\theta M^{-\frac{1}{2}}\Delta _{v}\big( M^{\frac{1}{2}}\{\mathbf{I}-\mathbf{P}\}f\big)\Big\}
-\{\mathbf{I}-\mathbf{P}\}\Big\{v\cdot \nabla_{x}\mathbf{P}f+u\cdot\nabla_{v}\mathbf{P}f\nonumber\\
&\quad-\frac{1}{2}u\cdot v\mathbf{P}f
-\theta M^{-\frac{1}{2}}\Delta _{v}\big( M^{\frac{1}{2}}\mathbf{P}f\big)\Big\} .\label{v2.25}
\end{align}

\begin{Proposition}\label{vl2.6}
Let $1\leq k\leq4$. Let $(f,\rho,u,\theta)$ is a smooth solution of the system\eqref{v1.5}-\eqref{v1.8}, we have
\begin{align}
\frac{\rm d}{{\rm d}t}&\sum_{\substack{|\beta|=k \\ |\alpha|+|\beta|\leq 4}}\|\partial^{\alpha}_{\beta}\{\mathbf{I}-\mathbf{P}\}f\|^{2}
+\lambda \sum_{\substack{|\beta|=k \\ |\alpha|+|\beta|\leq 4}}\|\partial^{\alpha}_{\beta}\{\mathbf{I}-\mathbf{P}\}f\|^{2}_{\nu}\nonumber\\
\leq \,&C\chi_{2\leq k\leq 4}\sum_{\substack{1\leq |\beta'|\leq k-1\\
|\alpha'|+|\beta'|\leq 4 }}\|\partial^{\alpha'}_{\beta'}\{\mathbf{I}-\mathbf{P}\}f\|^{2}_{\nu}\nonumber\\
&+C\|\nabla (b,\omega)\|^{2}_{H^{4-k}}+\sum_{|\alpha'|\leq4-k+1}\|\partial^{\alpha'}\{\mathbf{I}-\mathbf{P}\}f\|^{2}_{\nu}\nonumber\\
&+C\|(u,\theta)\|^{2}_{H^{3}}\sum_{|\alpha'|\leq4-k+1}\|\partial^{\alpha'}\{\mathbf{I}-\mathbf{P}\}f\|^{2}_{\nu}\nonumber\\
&+C(\|\theta\|_{H^{3}}+\|u\|^{2}_{H^{3}})
\sum_{\substack{1\leq |\beta'|\leq 4 \\|\alpha'|+|\beta'|\leq 4}}\|\partial^{\alpha'}_{\beta'}\{\mathbf{I}-\mathbf{P}\}f\|^{2}_{\nu}\nonumber\\
&+C\|(u,\theta)\|^{2}_{H^{4-k+1}}\|\nabla (b,\omega)\|^{2}_{H^{3}}
 \label{v2.26}
\end{align}
for all $0\leq t<T $ with any $T>0 .$ Here  $\chi_{E}$ is the characteristic function on the  set $E$.
\end{Proposition}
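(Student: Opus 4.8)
The plan is to work from the equation \eqref{v2.25} for the microscopic part $\{\mathbf{I}-\mathbf{P}\}f$. For each fixed $k$ with $1\le k\le 4$ I would apply $\partial^{\alpha}_{\beta}$ with $|\beta|=k$ and $|\alpha|+|\beta|\le 4$ to \eqref{v2.25}, take the $L^{2}_{x,v}$ inner product with $\partial^{\alpha}_{\beta}\{\mathbf{I}-\mathbf{P}\}f$, and then sum over all such $\alpha$. The time-derivative term gives $\tfrac12\tfrac{\rm d}{{\rm d}t}\|\partial^{\alpha}_{\beta}\{\mathbf{I}-\mathbf{P}\}f\|^{2}$, the streaming term $v\cdot\nabla_{x}\partial^{\alpha}_{\beta}\{\mathbf{I}-\mathbf{P}\}f$ integrates to zero against $\partial^{\alpha}_{\beta}\{\mathbf{I}-\mathbf{P}\}f$ (on $\mathbb{R}^{3}$ or $\mathbb{T}^{3}$), and the collision term, written as $\partial^{\alpha}_{\beta}\mathcal{L}\{\mathbf{I}-\mathbf{P}\}f=\mathcal{L}\partial^{\alpha}_{\beta}\{\mathbf{I}-\mathbf{P}\}f+[\partial^{\alpha}_{\beta},\mathcal{L}]\{\mathbf{I}-\mathbf{P}\}f$, supplies the dissipation $\lambda\|\partial^{\alpha}_{\beta}\{\mathbf{I}-\mathbf{P}\}f\|_{\nu}^{2}$ via the coercivity in property (5), i.e. \eqref{v2.1}. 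Since $\partial^{\alpha}_{\beta}\{\mathbf{I}-\mathbf{P}\}f$ need not be microscopic, I would first split off its $\mathbf{P}_{0}$-part and bound $\|\mathbf{P}_{0}\partial^{\alpha}_{\beta}\{\mathbf{I}-\mathbf{P}\}f\|$ by integrating by parts in $v$ (using $|\beta|\ge 1$) and using that the lowest moments of $\{\mathbf{I}-\mathbf{P}\}f$ vanish; what survives is controlled by pure-$x$ derivatives $\partial^{\alpha'}\{\mathbf{I}-\mathbf{P}\}f$ of order $\le 4-k+1$, producing the term $\sum_{|\alpha'|\le 4-k+1}\|\partial^{\alpha'}\{\mathbf{I}-\mathbf{P}\}f\|_{\nu}^{2}$ on the right-hand side of \eqref{v2.26}.

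The heart of the matter is the commutators. Since $[\partial_{v_{i}},v\cdot\nabla_{x}]=\partial_{x_{i}}$, the commutator $[\partial_{\beta},v\cdot\nabla_{x}]$ is a sum of operators of the form $\partial_{x_{i}}\partial^{\alpha}_{\beta-e_{i}}$, i.e. it trades one velocity derivative for one spatial derivative; pairing it against $\partial^{\alpha}_{\beta}\{\mathbf{I}-\mathbf{P}\}f$ and using Young's inequality absorbs a small multiple of the dissipation and, when $k\ge2$, leaves exactly the characteristic-function term $\chi_{2\le k\le 4}\sum_{1\le|\beta'|\le k-1,\,|\alpha'|+|\beta'|\le 4}\|\partial^{\alpha'}_{\beta'}\{\mathbf{I}-\mathbf{P}\}f\|_{\nu}^{2}$ of \eqref{v2.26}, while for $k=1$ it collapses into the pure-$x$ term mentioned above. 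The commutator $[\partial^{\alpha}_{\beta},\mathcal{L}]$ (with $\mathcal{L}$ of harmonic-oscillator type, $\mathcal{L}=\Delta_{v}-\tfrac{|v|^{2}}{4}+\tfrac32$) is likewise a finite sum of terms with polynomial-in-$v$ coefficients carrying strictly fewer $v$-derivatives, and is dominated the same way. I expect this bookkeeping — keeping every $v$-differentiation paid for by one spatial derivative while never exceeding total order $4$ — to be the main obstacle, and it is what forces the hierarchy in $k$.

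For the remaining left-hand terms of \eqref{v2.25} — $u\cdot\nabla_{v}\{\mathbf{I}-\mathbf{P}\}f$, $\tfrac12u\cdot v\{\mathbf{I}-\mathbf{P}\}f$ and $\theta M^{-1/2}\Delta_{v}(M^{1/2}\{\mathbf{I}-\mathbf{P}\}f)$ — I would distribute $\partial^{\alpha}_{\beta}$ by Leibniz (the $v$-derivatives hit only the $v$-dependent factors), symmetrise the $\theta$-term by one integration by parts in $v$ as was done for $I_{2}$ in the proof of Proposition \ref{vl2.3}, and then apply H\"older together with the Sobolev and product estimates of Lemma \ref{vl2.1}; this gives the $C(\|\theta\|_{H^{3}}+\|u\|^{2}_{H^{3}})\sum_{1\le|\beta'|\le4,\,|\alpha'|+|\beta'|\le4}\|\partial^{\alpha'}_{\beta'}\{\mathbf{I}-\mathbf{P}\}f\|_{\nu}^{2}$ and $C\|(u,\theta)\|^{2}_{H^{3}}\sum_{|\alpha'|\le4-k+1}\|\partial^{\alpha'}\{\mathbf{I}-\mathbf{P}\}f\|_{\nu}^{2}$ contributions. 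On the right of \eqref{v2.25}, the term $\mathbf{P}\{\cdots\}$ has explicit polynomial-in-$v$ structure, so its mixed derivatives reduce to $x$-moments of $\{\mathbf{I}-\mathbf{P}\}f$ and its $x$-derivatives of order $\le 4-k+1$, multiplied by $1$ or $\|(u,\theta)\|_{H^{3}}$. For the last group $\{\mathbf{I}-\mathbf{P}\}\{v\cdot\nabla_{x}\mathbf{P}f+u\cdot\nabla_{v}\mathbf{P}f-\tfrac12u\cdot v\mathbf{P}f-\theta M^{-1/2}\Delta_{v}(M^{1/2}\mathbf{P}f)\}$ I would use the identities $\{\mathbf{I}-\mathbf{P}\}(v\sqrt M)=0$ and $\{\mathbf{I}-\mathbf{P}\}((|v|^{2}-3)\sqrt M)=0$ to check that, for each of the four terms, the $a$-contribution lies in $\mathcal{N}$ and is annihilated, so only pieces built from $\nabla_{x}b$, $\nabla_{x}\omega$ and the quadratic products of $(u,\theta)$ with $(b,\omega)$ survive; differentiating these explicit velocity profiles and invoking Lemma \ref{vl2.1} (which always places a gradient on one factor of a product) produces the $C\|\nabla(b,\omega)\|^{2}_{H^{4-k}}$ and $C\|(u,\theta)\|^{2}_{H^{4-k+1}}\|\nabla(b,\omega)\|^{2}_{H^{3}}$ terms of \eqref{v2.26}.

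Finally, summing the differential inequality over all $\alpha$ with $|\beta|=k$ and $|\alpha|+|\beta|\le4$, choosing the Young constants small enough to be absorbed by the dissipation $\lambda\|\partial^{\alpha}_{\beta}\{\mathbf{I}-\mathbf{P}\}f\|_{\nu}^{2}$ on the left, and using the a priori smallness \eqref{v2.2} to dominate the quadratic coefficients, yields \eqref{v2.26}. As indicated, the decisive step is the commutator accounting in the streaming term; the only other genuine subtlety, the non-microscopic-ness of $\partial^{\alpha}_{\beta}\{\mathbf{I}-\mathbf{P}\}f$, is dealt with by the $\mathbf{P}_{0}$-splitting and the $v$-integration by parts described in the first paragraph.
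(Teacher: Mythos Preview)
Your plan is correct and follows essentially the same route as the paper: apply $\partial^{\alpha}_{\beta}$ to \eqref{v2.25}, run the $L^{2}$ energy estimate, extract dissipation from $\mathcal{L}$ via the $\mathbf{P}_{0}$-splitting (the paper invokes \eqref{Co} rather than \eqref{v2.1} here, but the argument is identical), and handle the commutators $[\partial^{\beta}_{v},v\cdot\nabla_{x}]$ and $[\partial^{\beta}_{v},\mathcal{L}]=[\partial^{\beta}_{v},-|v|^{2}/4]$ together with the remaining source terms exactly as you describe, organized in the paper as $R_{1},\dots,R_{7}$. One small point: the smallness assumption \eqref{v2.2} is not used at this stage---all quadratic coefficients are kept explicit in \eqref{v2.26} and only absorbed later, in Remark~\ref{vr2.1}.
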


\begin{proof} This proof is based on  some ideas of \cite[Lemma 4.3]{DFT}.

Fix $k\,(1\leq k\leq 4)$.  let $\alpha$ and $\beta$ satisfy  $|\beta|=k$ and $|\alpha|+|\beta|\leq 4$,
  For \eqref{v2.25},  by standard $L^{2}$ energy estimate, we get
\begin{align}
\frac{1}{2}\frac{\rm d}{{\rm d}t}\|\partial^{\alpha}_{\beta}\{\mathbf{I}-\mathbf{P}\}f\|^{2}+\int\langle -L\partial^{\alpha}_{\beta}
\{\mathbf{I}-\mathbf{P}\}f,\partial^{\alpha}_{\beta}\{\mathbf{I}-\mathbf{P}\}f\rangle   \, {\rm d}x :=\sum_{i=1}^{7}R_{i}\label{v2.27}
\end{align}
with
\begin{align}
&R_{1}=\int\langle -\partial^{\alpha}_{x}[\partial^{\beta}_{v},v\cdot \nabla_{x}]\{\mathbf{I}-\mathbf{P}\}f,\partial^{\alpha}_{\beta}\{\mathbf{I}-\mathbf{P}\}f\rangle   \, {\rm d}x ,
\nonumber\\
&R_{2}=\int\langle \partial^{\alpha}_{x}[\partial^{\beta}_{v},-|v|^{2}]
\{\mathbf{I}-\mathbf{P}\}f,\partial^{\alpha}_{\beta}\{\mathbf{I}-\mathbf{P}\}f\rangle   \, {\rm d}x ,\nonumber\\
&R_{3}=\int\langle -\partial^{\alpha}_{\beta}(u\cdot \nabla_{v}\{\mathbf{I}-\mathbf{P}\}f),\partial^{\alpha}_{\beta}\{\mathbf{I}-\mathbf{P}\}f\rangle   \, {\rm d}x ,\nonumber\\
&R_{4}=\int\langle \frac{1}{2}\partial^{\alpha}_{\beta}(u\cdot v\{\mathbf{I}-\mathbf{P}\}f),\partial^{\alpha}_{\beta}\{\mathbf{I}-\mathbf{P}\}f\rangle   \, {\rm d}x ,\nonumber\\
&R_{5}=\int\langle \partial^{\alpha}_{\beta}\big(\theta M^{-\frac{1}{2}}\Delta_{v}(\sqrt{M}\{\mathbf{I}-\mathbf{P}\}f)\big),\partial^{\alpha}_{\beta}\{\mathbf{I}-\mathbf{P}\}f\rangle   \, {\rm d}x\nonumber\\
&R_{6}=\int\Big\langle \partial^{\alpha}_{\beta}\mathbf{P}\Big(v\cdot \nabla_{x}\{\mathbf{I}-\mathbf{P}\}f+u\cdot \nabla_{v}\{\mathbf{I}-\mathbf{P}\}f\nonumber\\
&\qquad   -\frac{1}{2}u\cdot v\{\mathbf{I}-\mathbf{P}\}f-\theta M^{-\frac{1}{2}}\Delta_{v}(\sqrt{M}\{\mathbf{I}-\mathbf{P}\}f)\Big),\partial^{\alpha}_{\beta}\{\mathbf{I}-\mathbf{P}\}f\Big\rangle   \, {\rm d}x ,\nonumber\\
&R_{7}=\int\Big\langle -\partial^{\alpha}_{\beta}\{\mathbf{I}-\mathbf{P}\}\Big(v\cdot \nabla_{x}\mathbf{P}f+u\cdot\nabla_{v}\mathbf{P}f-\frac{1}{2}u\cdot v\mathbf{P}f\nonumber\\
&\qquad-\theta M^{-\frac{1}{2}}\Delta_{v}(\sqrt{M}\mathbf{P}f)\Big),
\partial^{\alpha}_{\beta}\{\mathbf{I}-\mathbf{P}\}f\Big\rangle   \, {\rm d}x .\nonumber
\end{align}
Here the fact   $[\partial^{\beta}_{v},\mathcal{L}]=[\partial^{\beta}_{v},-|v|^{2}]$ has been used.

Now  we make estimates for each term $R_{i}$ in \eqref{v2.27} as the following 
\begin{align}
R_{1}&\leq \eta \|\partial^{\alpha}_{\beta}\{\mathbf{I}-\mathbf{P}\}f\|^{2}
+C_{\eta}\|[\partial^{\beta}_{v},v\cdot\nabla_{v}]\partial^{\alpha}_{x}\{\mathbf{I}-\mathbf{P}\}f\|^{2}\nonumber\\
&\leq \eta \|\partial^{\alpha}_{\beta}\{\mathbf{I}-\mathbf{P}\}f\|^{2}
+C_{\eta}\sum_{|\alpha'|\leq 4-k}\|\partial^{\alpha'}\nabla_{x}\{\mathbf{I}-\mathbf{P}\}f\|^{2}\nonumber\\
&\quad +\chi_{2\leq k\leq4} C_{\eta}\sum_{\substack{1\leq |\beta'|\leq k-1 \\ |\alpha'|+|\beta'|\leq 4}}\|\partial^{\alpha'}_{\beta'}\{\mathbf{I}-\mathbf{P}\}f\|^{2},\nonumber\\
R_{2}&\leq \eta \|\partial^{\alpha}_{\beta}\{\mathbf{I}-\mathbf{P}\}f\|^{2}
+C_{\eta}\|[\partial^{\beta}_{v},-|v|^{2}]\partial^{\alpha}_{x}\{\mathbf{I}-\mathbf{P}\}f\|^{2}\nonumber\\
&\leq \eta \|\partial^{\alpha}_{\beta}\{\mathbf{I}-\mathbf{P}\}f\|^{2}
+C_{\eta}\sum_{|\alpha'|\leq 4-k}\|\partial^{\alpha'}\{\mathbf{I}-\mathbf{P}\}f\|^{2}_{\nu}\nonumber\\
&\quad  +\chi_{2\leq k\leq4} C_{\eta}\sum_{\substack{1\leq |\beta'|\leq k-1 \\ |\alpha'|+|\beta'|\leq 4}}\|\partial^{\alpha'}_{\beta'}\{\mathbf{I}-\mathbf{P}\}f\|^{2}_{\nu},\nonumber\\
R_{3}&\leq \eta \|\partial^{\alpha}_{\beta}\{\mathbf{I}-\mathbf{P}\}f\|^{2}
+C_{\eta}\|\partial^{\alpha}_{x}(u\cdot\nabla_{v}\partial^{\beta}_{v}\{\mathbf{I}-\mathbf{P}\}f)\|^{2}\nonumber\\
&\leq \eta \|\partial^{\alpha}_{\beta}\{\mathbf{I}-\mathbf{P}\}f\|^{2}
+C_{\eta}\|u\|^{2}_{H^{3}}\sum_{\substack{1\leq |\beta'|\leq 4 \\  |\alpha'|+|\beta'|\leq 4 }}\|\partial^{\alpha'}_{\beta'}\{\mathbf{I}-\mathbf{P}\}f\|^{2},\nonumber\\
R_{4}&\leq \eta \|\partial^{\alpha}_{\beta}\{\mathbf{I}-\mathbf{P}\}f\|^{2}
+C_{\eta}\|\partial^{\alpha}_{x}\big(u\cdot\partial^{\beta}_{v}(v\{\mathbf{I}-\mathbf{P}\}f)\big)\|^{2}\nonumber\\
&\leq \eta \|\partial^{\alpha}_{\beta}\{\mathbf{I}-\mathbf{P}\}f\|^{2}
+C_{\eta}\|u\|^{2}_{H^{3}}\sum_{\substack{1\leq |\beta'|\leq 4 \\  |\alpha'|+|\beta'|\leq 4 }}\|\partial^{\alpha'}_{\beta'}\{\mathbf{I}-\mathbf{P}\}f\|^{2}_{\nu}\nonumber\\
&\quad +C_{\eta}\|u\|^{2}_{H^{3}}\sum_{  |\alpha'|\leq 4-k }\|\partial^{\alpha'}\{\mathbf{I}-\mathbf{P}\}f\|^{2},\nonumber\\
R_{5}
&\leq \eta \|\partial^{\alpha}_{\beta}\{\mathbf{I}-\mathbf{P}\}f\|^{2}
+C_{\eta}\|\theta\|_{H^{3}}\sum_{\substack{1\leq |\beta'|\leq 4 \\  |\alpha'|+|\beta'|\leq 4 }}\|\partial^{\alpha'}_{\beta'}\{\mathbf{I}-\mathbf{P}\}f\|^{2}_{\nu}\nonumber\\
&\quad +C_{\eta}\|\theta\|^{2}_{H^{3}}\sum_{  |\alpha'|\leq 4-k+1 }\|\partial^{\alpha'}\{\mathbf{I}-\mathbf{P}\}f\|^{2},\nonumber\\
R_{6}&\leq \eta \|\partial^{\alpha}_{\beta}\{\mathbf{I}-\mathbf{P}\}f\|^{2}+
C_{\eta}\|\partial^{\alpha}_{\beta}\mathbf{P}(v\cdot \nabla_{x}\{\mathbf{I}-\mathbf{P}\}f)\|^{2}\nonumber\\
&\quad +C_{\eta}\|\partial^{\alpha}_{\beta}\mathbf{P}(u\cdot \nabla_{v}\{\mathbf{I}-\mathbf{P}\}f)\|^{2}
+C_{\eta}\|\partial^{\alpha}_{\beta}\mathbf{P}(u\cdot v\{\mathbf{I}-\mathbf{P}\}f)\|^{2}\nonumber\\
&\quad+C_{\eta}\|\partial^{\alpha}_{\beta}\mathbf{P}\big(\theta M^{-\frac{1}{2}}\Delta_{v}(M^{\frac{1}{2}}\{\mathbf{I}-\mathbf{P}\}f)\big)\|^{2}\nonumber\\
&\leq \eta \|\partial^{\alpha}_{\beta}\{\mathbf{I}-\mathbf{P}\}f\|^{2}
+C_{\eta}\sum_{  |\alpha'|\leq 4-k }\|\nabla_{x}\partial^{\alpha'}\{\mathbf{I}-\mathbf{P}\}f\|^{2}\nonumber\\
&\quad+C_{\eta}\|(u,\theta)\|^{2}_{H^{3}}\sum_{  |\alpha'|\leq 4-k }\|\partial^{\alpha'}\{\mathbf{I}-\mathbf{P}\}f\|^{2},\nonumber\\
R_{7}&\leq \eta \|\partial^{\alpha}_{\beta}\{\mathbf{I}-\mathbf{P}\}f\|^{2}
+C_{\eta}\|\partial^{\alpha}_{\beta}\{\mathbf{I}-\mathbf{P}\}(v\cdot \nabla_{x}\mathbf{P}f)\|^{2}\nonumber\\
&\quad +C_{\eta}\|\partial^{\alpha}_{\beta}\{\mathbf{I}-\mathbf{P}\}(u\cdot \nabla_{v}\mathbf{P}f)\|^{2}
+C_{\eta}\|\partial^{\alpha}_{\beta}\{\mathbf{I}-\mathbf{P}\}(u\cdot v \mathbf{P}f)\|^{2}\nonumber\\
&\quad+C_{\eta}\|\partial^{\alpha}_{\beta}\mathbf{I-P}\big(\theta M^{-\frac{1}{2}}\Delta_{v}(M^{\frac{1}{2}}\mathbf{P}f)\big)\|^{2}\nonumber\\
&\leq \eta \|\partial^{\alpha}_{\beta}\{\mathbf{I}-\mathbf{P}\}f\|^{2}
+C_{\eta}\|\nabla (b,\omega)\|^{2}_{H^{4-k}}\nonumber\\
&\quad+C_{\eta}\|(u,\theta)\|^{2}_{H^{4-k+1}}\|\nabla (b,\omega)\|^{2}_{H^{2}}.\nonumber
\end{align}

We estimate the term $ \int\langle -L\partial^{\alpha}_{\beta}\{\mathbf{I}-\mathbf{P}\}f, \partial^{\alpha}_{\beta}\{\mathbf{I}-\mathbf{P}\}f\rangle   \, {\rm d}x$  as
\begin{align}
&\quad \int\langle -L\partial^{\alpha}_{\beta}\{\mathbf{I}-\mathbf{P}\}f, \partial^{\alpha}_{\beta}\{\mathbf{I}-\mathbf{P}\}f\rangle   \, {\rm d}x\nonumber\\
&\geq\lambda_{1}\|\{\mathbf{I}-\mathbf{P}_{0}\}\partial^{\alpha}_{\beta}\{\mathbf{I}-\mathbf{P}\}f\|^{2}_{\nu}\nonumber\\
&\geq\frac{\lambda_{1}}{2}\|\partial^{\alpha}_{\beta}\{\mathbf{I}-\mathbf{P}\}f\|^{2}_{\nu}
-\lambda_{1}\|\mathbf{P}_{0}\partial^{\alpha}_{\beta}\{\mathbf{I}-\mathbf{P}\}f\|^{2}_{\nu}\nonumber\\
&\geq\frac{\lambda_{1}}{2}\|\partial^{\alpha}_{\beta}\{\mathbf{I}-\mathbf{P}\}f\|^{2}_{\nu}
-C\|\partial^{\alpha}\{\mathbf{I}-\mathbf{P}\}f\|^{2}.\nonumber
\end{align}
Substituting all the above estimates into \eqref{v2.27} and selecting $\eta$  small enough, we deduce that \eqref{v2.26} holds.
\end{proof}
\begin{Remark}\label{vr2.1}
  Choosing some suitable constants $C_{k}$ together with the above lemma, we have
\begin{align}
\frac{\rm d}{{\rm d}t}&\sum_{1\leq k\leq4}C_{k}\sum_{\substack{|\beta|=k \\ |\alpha|+|\beta|\leq 4}}\|\partial^{\alpha}_{\beta}\{\mathbf{I}-\mathbf{P}\}f\|^{2}
+\lambda\sum_{\substack{1\leq |\beta|\leq 4 \\ |\alpha|+|\beta|\leq 4}}\|\partial^{\alpha}_{\beta}\{\mathbf{I}-\mathbf{P}\}f\|^{2}_{\nu}\nonumber\\
&\leq C\Big(\|\nabla (b,\omega)\|^{2}_{H^{3}}+\sum_{|\alpha|\leq 4}\|\partial^{\alpha}\{\mathbf{I}-\mathbf{P}\}f\|^{2}_{\nu}\Big)\nonumber\\
&\quad+ C(\|u\|^{2}_{H^{3}}+\|\theta\|^{2}_{H^{3}})\sum_{|\alpha|\leq 4}\|\partial^{\alpha}\{\mathbf{I}-\mathbf{P}\}f\|^{2}_{\nu}\nonumber\\
&\quad+C(\|\theta\|_{H^{3}}+\|u\|^{2}_{H^{3}})
\sum_{\substack{1\leq||\beta|\leq 4  \\ \alpha|+|\beta|\leq 4}}\|\partial^{\alpha}_{\beta}\{\mathbf{I}-\mathbf{P}\}f\|^{2}_{\nu}\nonumber\\
&\quad+C\|(u,\theta)\|^{2}_{H^{4}}\|\nabla (b,\omega)\|^{2}_{H^{3}}.
\label{v2.28}
\end{align}
\end{Remark}

Now, let us  define $\mathcal{E}_{2}(t)$ and $\mathcal{D}_{2}(t)$ as follows
\begin{align}
\mathcal{E}_{2}(t)& :=\sum_{1\leq k\leq4}C_{k}\sum_{\substack{|\beta|=k \\ |\alpha|+|\beta|\leq 4}}\|\partial^{\alpha}_{\beta}\{\mathbf{I}-\mathbf{P}\}f\|^{2},\nonumber\\
\mathcal{D}_{2}(t)& :=\sum_{\substack{1\leq |\beta|\leq 4 \\ |\alpha|+|\beta|\leq 4}}\|\partial^{\alpha}_{\beta}\{\mathbf{I}-\mathbf{P}\}f\|^{2}_{\nu}.\nonumber
\end{align}
Then  the total energy functional $\mathcal{E}(t)$ and the  dissipation rate $\mathcal{D}(t)$ can be defined by
\begin{align}
\mathcal{E}(t)& :=\mathcal{E}_{1}(t)+\tau_{3}\mathcal{E}_{2}(t),\nonumber\\
\mathcal{D}(t)& :=\mathcal{D}_{1}(t)+\tau_{3}\mathcal{D}_{2}(t),\nonumber
\end{align}
where $\tau_{3}> 0$  is very small and will be determined later.

According to the inequalities \eqref{v2.24}, \eqref{v2.28} and \eqref{v2.2}, we have
\begin{align}
\frac{\rm d}{{\rm d}t}\mathcal{E}(t)+\lambda \mathcal{D}(t)\leq C\big(\mathcal{E}^{\frac{1}{2}}(t)+\mathcal{E}^{2}(t)\big)\mathcal{D}(t)
\leq C(\delta+\delta^{2}) \mathcal{D}(t). \label{v2.29}
\end{align}
Thus, as long as $0<\delta < 1$ is sufficiently small, the  integration of \eqref{v2.29} with respect to time gives
\begin{align}
\mathcal{E}(t)+\lambda \int^{t}_{0}\mathcal{D}(s){\rm d}s\leq \mathcal{E}(0) \label{v2.30}
\end{align}
for all $0\leq t<T $.  In addition, \eqref{v2.2} can be proved by choosing
$  
\mathcal{E}(0) \sim \|f_{0}\|^{2}_{H^{4}_{x,v}}+\|(\rho_{0},u_{0})\|^{2}_{H^{4}}  
$  
sufficiently small.

\subsection{Global Existence}
 In this subsection, we will show that there exists a unique global-in-time solution to the problem \eqref{v1.5}-\eqref{v1.8}.
  The proof is based on the uniform energy estimates for the iteration sequence of   approximate solutions.
 The sequence $(f^{n},\rho^{n},u^{n},\theta^{n})_{n=0}^{\infty}$ satisfies the following system:
\begin{align}
\partial_{t}f^{n+1} +v\cdot \nabla_{x}f^{n+1}-\mathcal{L}f ^{n+1}=&
 -u^{n}\cdot\Big(\nabla_{v}f^{n+1}-\frac{v}{2}f^{n+1}-v\sqrt{M}\Big)\nonumber\\
 &+\theta^{n}M^{-\frac{1}{2}}\Delta_{v}\big(M+\sqrt{M}f^{n+1}\big),\label{v2.31}
 \end{align}
 \begin{align}
\partial_{t}\rho^{n+1} +u^{n}\cdot \nabla \rho^{n+1}+(1+\rho^{n})\ \dv u^{n+1}=&0,\label{v2.32}
 \end{align}
 \begin{align}
\partial_{t}u^{n+1} -\frac{1}{1+\rho^{n}}\Delta u^{n+1}=&-u^{n}\cdot \nabla u^{n}-\nabla \theta^{n}
-\frac{1+\theta^{n}}{1+\rho^{n}}\nabla \rho^{n}\nonumber\\
&+\frac{1}{1+\rho^{n}}\big(b^{n}-u^{n}-u^{n}a^{n}\big),\label{v2.33}
 \end{align}
 \begin{align}
\partial_{t}\theta^{n+1} -\frac{1}{1+\rho^{n}}\Delta \theta^{n+1}=&-u^{n}\cdot\nabla\theta^{n}-\theta^{n}\dv u^{n}-\dv u^{n}+\frac{\sqrt{3}}{1+\rho^{n}}(\sqrt{2}\omega^{n}-\sqrt{3}\theta^{n})\nonumber\\
&+\frac{1}{1+\rho^{n}}\big((u^{n}-b^{n})\cdot u^{n}-u^{n}\cdot b^{n}+a^{n}|u^{n}|^{2}-3a^{n}\theta^{n}\big),\label{vx2.33}
\end{align}
where $n=0,1,2,\dots$, and $(f^{0},\rho^{0},u^{0},\theta^{0})=(f_{0},\rho_{0},u_{0},\theta_{0})$ is the starting value of iteration.

We try to find solutions in the following  function space $X(0,T;A)$
\begin{align}
X(0,T;A) :=\left\{\begin{array}{c}
f\in C([0,T],H^{4}(\mathbb{R}^{3}\times\mathbb{R}^{3})),\, M+\sqrt{M}f\geq 0;\\
\rho\in C([0,T],H^{4}(\mathbb{R}^{3}))\bigcap C^{1}([0,T],H^{3}(\mathbb{R}^{3}));\\
u\in C([0,T],H^{4}(\mathbb{R}^{3}))\bigcap C^{1}([0,T],H^{2}(\mathbb{R}^{3}));\\
\theta\in C([0,T],H^{4}(\mathbb{R}^{3}))\bigcap C^{1}([0,T],H^{2}(\mathbb{R}^{3}));\\
\sup_{0\leq t\leq T}\Big\{\|f(t)\|_{H^{4}_{x,v}}+\|(\rho,u,\theta)\|_{H^{4}}\Big\}\leq A;\\
\rho_{1} =\frac{1}{2}(-1+\inf \rho(0,x))>  -1; \\
\inf_{0\leq t\leq T, x\in \mathbb{R}^{3}}\rho(t,x)\geq \rho_{1}.
\end{array}
\right\}   \label{v2.34}
\end{align}

We now state the local existence theorem.

\begin{Theorem}\label{vt2.1}
There exist $A_{0}>  0 $ and $ T^{*}> 0,$ such that if $f_{0}\in H^{4}(\mathbb{R}^{3}\times\mathbb{R}^{3}),
\rho_{0}\in H^{4}(\mathbb{R}^{3}),u_{0}\in H^{4}(\mathbb{R}^{3}),\theta_{0}\in H^{4}(\mathbb{R}^{3})$ with $F_{0}=M+\sqrt{M}f_{0}\geq 0$
and $\mathcal{E}(0)\leq \frac{A_{0}}{2}$, then for each $n\geq 1$, $(f^{n},\rho^{n},u^{n},\theta^{n})$ is well-defined with
\begin{align}
(f^{n},\rho^{n},u^{n},\theta^{n})\in X(0,T^{*};A_{0}). \label{v2.35}
\end{align}
Further, we obtain:
\begin{enumerate}[\rm{(}1\rm{)}]
  \item $(f^{n},\rho^{n},u^{n},\theta^{n})_{n\geq 0}$ is a Cauchy sequence in the Banach space $C([0,T^{*}];H^{3}(\mathbb{R}^{3}\times\mathbb{R}^{3}))$,
    \item let $(f,\rho,u,\theta)$ be the  limit function, and then $(f,\rho,u,\theta)\in X(0,T^{*};A_{0})$,

  \item $(f,\rho,u,\theta)$ satisfies  the system
\eqref{v1.5}-\eqref{v1.8},

\item  $(f,\rho,u,\theta)$ is  the  unique solution of \eqref{v1.5}-\eqref{v1.8} in $X(0,T^{*};A_{0})$.

\end{enumerate}
\end{Theorem}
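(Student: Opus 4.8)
The plan is to prove Theorem~\ref{vt2.1} by the classical Picard iteration together with the uniform energy estimates already developed in Propositions~\ref{vl2.2}--\ref{vl2.6}. The key structural observation is that the system \eqref{v2.31}--\eqref{vx2.33} is \emph{linear and essentially decoupled} in the $(n+1)$-th unknowns: given $(f^{n},\rho^{n},u^{n},\theta^{n})$, equation \eqref{v2.31} is a linear equation for $f^{n+1}$ in which, after moving the $v$-diffusion to the left, the principal part in $v$ is $(1+\theta^{n})\Delta_{v}$ --- uniformly elliptic since $\|\theta^{n}\|_{H^{4}}$ is small --- coupled to the streaming term $v\cdot\nabla_{x}$, so it is a linear kinetic Fokker--Planck equation solvable in $C([0,T];H^{4}(\mathbb{R}^{3}\times\mathbb{R}^{3}))$ by a standard vanishing-viscosity or Galerkin construction; equations \eqref{v2.33} and \eqref{vx2.33} are linear uniformly parabolic equations for $u^{n+1}$ and $\theta^{n+1}$ with coefficient $(1+\rho^{n})^{-1}$, which is smooth and bounded because $\rho^{n}\geq\rho_{1}>-1$; and \eqref{v2.32} is then a linear inhomogeneous transport equation for $\rho^{n+1}$, solved by the method of characteristics along the flow of $u^{n}$. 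Hence every iterate $(f^{n},\rho^{n},u^{n},\theta^{n})$ is well-defined, starting from $(f^{0},\rho^{0},u^{0},\theta^{0})=(f_{0},\rho_{0},u_{0},\theta_{0})$, which lies in $X(0,T^{*};A_{0})$ since $\mathcal{E}(0)\leq A_{0}/2$ and the smallness of the data forces $\inf_x\rho_{0}>-1$. The nonnegativity $F^{n+1}=M+\sqrt{M}f^{n+1}\geq0$ follows from $F_{0}\geq0$ because $F^{n+1}$ solves the linear Fokker--Planck equation $\partial_{t}F^{n+1}+v\cdot\nabla_{x}F^{n+1}=\dv_{v}\big((v-u^{n})F^{n+1}+(1+\theta^{n})\nabla_{v}F^{n+1}\big)$, which obeys the maximum principle; and $\inf_{x}\rho^{n+1}(t,x)\geq\rho_{1}$ follows from the characteristic representation of \eqref{v2.32}, since $|\rho^{n+1}(t,x)-\rho_{0}(x)|\leq C(A_{0})T^{*}$ is small once $T^{*}$ is small.

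The next step is the uniform-in-$n$ bound propagating membership in $X(0,T^{*};A_{0})$. Assuming $(f^{n},\rho^{n},u^{n},\theta^{n})\in X(0,T^{*};A_{0})$, I would rerun verbatim the computations of Propositions~\ref{vl2.2}--\ref{vl2.6} and Remark~\ref{vr2.1} for the linear system \eqref{v2.31}--\eqref{vx2.33}; the only change is that every ``bad'' quadratic term now carries one factor with index $n$ (bounded by $C(A_{0})$) and one with index $n+1$, so by Cauchy--Schwarz, Young's inequality, and the algebra/Moser estimates \eqref{v2.4}--\eqref{v2.5} of Lemma~\ref{vl2.1} each such term is dominated by $\varepsilon\,\mathcal{D}^{n+1}(t)+C_{\varepsilon}(A_{0})\big(1+\mathcal{E}^{n+1}(t)\big)$, where $\mathcal{E}^{n+1},\mathcal{D}^{n+1}$ denote the functionals $\mathcal{E},\mathcal{D}$ built from the $(n+1)$-th iterate. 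Taking $\varepsilon$ small enough to absorb $\varepsilon\mathcal{D}^{n+1}$ into the dissipation (whose coefficient $\lambda$ is fixed, arising from the linear part $-\mathcal{L}$, $\Delta u^{n+1}$, $\Delta\theta^{n+1}$ of the $(n+1)$-th equations) one obtains, in place of \eqref{v2.29},
\begin{align}
\frac{\rm d}{{\rm d}t}\mathcal{E}^{n+1}(t)+\frac{\lambda}{2}\,\mathcal{D}^{n+1}(t)\leq C(A_{0})\big(1+\mathcal{E}^{n+1}(t)\big),\nonumber
\end{align}
and Gr\"onwall's inequality with $\mathcal{E}^{n+1}(0)=\mathcal{E}(0)\leq A_{0}/2$ yields $\mathcal{E}^{n+1}(t)\leq\big(\mathcal{E}(0)+1\big)e^{C(A_{0})t}-1$ on $[0,T^{*}]$. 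Since $\mathcal{E}^{n+1}(t)\sim\|f^{n+1}\|^{2}_{H^{4}_{x,v}}+\|(\rho^{n+1},u^{n+1},\theta^{n+1})\|^{2}_{H^{4}}$, one first fixes $A_{0}\geq 2\mathcal{E}(0)$ and then shrinks $T^{*}$ so that the right-hand side stays $\leq A_{0}$; combined with the positivity and lower bound on $\rho^{n+1}$ above, this gives $(f^{n+1},\rho^{n+1},u^{n+1},\theta^{n+1})\in X(0,T^{*};A_{0})$, hence \eqref{v2.35} by induction.

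For assertion (1), set $g^{n}=f^{n+1}-f^{n}$, $\sigma^{n}=\rho^{n+1}-\rho^{n}$, $w^{n}=u^{n+1}-u^{n}$, $\eta^{n}=\theta^{n+1}-\theta^{n}$; subtracting consecutive copies of \eqref{v2.31}--\eqref{vx2.33} shows that $(g^{n},\sigma^{n},w^{n},\eta^{n})$ solves a linear system of the same type whose sources are bilinear in the previous differences $(g^{n-1},\sigma^{n-1},w^{n-1},\eta^{n-1})$ and in the iterates bounded by $A_{0}$. Running the energy estimate \emph{one order lower}, in $H^{3}$ --- the loss of one derivative being forced by source terms such as $\sigma^{n-1}\Delta u^{n}$ --- and using Lemma~\ref{vl2.1} and the coercivity \eqref{v2.1}, one gets $\frac{\rm d}{{\rm d}t}\|(g^{n},\sigma^{n},w^{n},\eta^{n})\|^{2}_{H^{3}}\leq C(A_{0})\|(g^{n},\sigma^{n},w^{n},\eta^{n})\|^{2}_{H^{3}}+C(A_{0})\|(g^{n-1},\sigma^{n-1},w^{n-1},\eta^{n-1})\|^{2}_{H^{3}}$; since all iterates share the same initial data, $g^{n}(0)=\sigma^{n}(0)=w^{n}(0)=\eta^{n}(0)=0$, and Gr\"onwall over $[0,T^{*}]$ gives $\sup_{[0,T^{*}]}\|(g^{n},\dots)\|^{2}_{H^{3}}\leq C(A_{0})T^{*}e^{C(A_{0})T^{*}}\sup_{[0,T^{*}]}\|(g^{n-1},\dots)\|^{2}_{H^{3}}$. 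Shrinking $T^{*}$ once more so that $C(A_{0})T^{*}e^{C(A_{0})T^{*}}\leq\frac{1}{2}$ makes this a contraction, so $(f^{n},\rho^{n},u^{n},\theta^{n})$ is Cauchy in $C([0,T^{*}];H^{3})$ and converges to some $(f,\rho,u,\theta)$. For assertion (2), the uniform bound in $X(0,T^{*};A_{0})$ and weak-$*$ compactness place $(f,\rho,u,\theta)$ in $L^{\infty}([0,T^{*}];H^{4})$ with the same bound; interpolating the $H^{3}$-convergence against the $H^{4}$-bound gives convergence in $H^{4-\varepsilon}$ for any $\varepsilon>0$, hence in $C^{0}_{b}$, which preserves $F=M+\sqrt{M}f\geq0$ and $\inf\rho\geq\rho_{1}$, while time-continuity into $H^{4}$ and $C^{1}$-in-time regularity into $H^{\leq2}$ are read off the limiting equations, so $(f,\rho,u,\theta)\in X(0,T^{*};A_{0})$. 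For assertion (3), passing to the limit $n\to\infty$ in \eqref{v2.31}--\eqref{vx2.33}, the linear terms converge in $H^{3}$ and the nonlinear products by \eqref{v2.4}--\eqref{v2.5} together with the uniform $H^{4}$-bounds, so $(f,\rho,u,\theta)$ solves \eqref{v1.5}--\eqref{v1.8}. For assertion (4), the difference of two solutions in $X(0,T^{*};A_{0})$ with the same data solves a linear system of the same kind, and a basic $L^{2}$ (or $H^{3}$) energy estimate plus Gr\"onwall forces it to vanish on $[0,T^{*}]$.

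The step I expect to be the main obstacle is the treatment of the linear kinetic equation \eqref{v2.31} for $f^{n+1}$ --- both its solvability in $H^{4}_{x,v}$ and the energy estimates behind the two middle paragraphs. The difficulties are the $v$-unbounded coefficients (the streaming term $v\cdot\nabla_{x}$ and the $v\cdot\nabla_{v}$, $v\sqrt{M}$ terms coming from the coupling), which force the weighted norm $\|\cdot\|_{\nu}$ and the macro--micro decomposition already at the linear level, and the mixed space--velocity estimates of Proposition~\ref{vl2.6} type, in which $v$-derivatives of the streaming term generate lower-order-in-$v$ commutators that must be reabsorbed inductively in $|\beta|$; one must also check that the contraction estimate still closes in $H^{3}$ despite the hyperbolic term $v\cdot\nabla_{x}$, which is exactly where the dissipation \eqref{v2.1} of $\mathcal{L}$ and the algebra inequalities of Lemma~\ref{vl2.1} are essential. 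By contrast, the parabolic equations for $u^{n+1},\theta^{n+1},\rho^{n+1}$ and the passage to the limit are routine.
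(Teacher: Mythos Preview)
Your overall strategy---Picard iteration, uniform $H^4$ bounds by induction, contraction in $H^3$, passage to the limit---matches the paper's. Two points, however, need correction.

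First, the plan to ``rerun verbatim the computations of Propositions~\ref{vl2.2}--\ref{vl2.6}'' does not work as stated. Those propositions are tailored to the \emph{nonlinear coupled} system: the terms $u\cdot v\sqrt{M}$ and $(|v|^2-3)\sqrt{M}\theta$ in \eqref{v1.5}, tested against $f$, produce $u\cdot b$ and $\sqrt{6}\theta\omega$; these combine with their counterparts in \eqref{v1.7}--\eqref{vx1.7} and with the $|b|^2+2|\omega|^2$ coming from $\langle-\mathcal{L}f,f\rangle$ to form the perfect squares $\|b-u\|^2$ and $\|\sqrt{2}\omega-\sqrt{3}\theta\|^2$. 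In the iteration this cancellation is destroyed, since \eqref{v2.31} couples $f^{n+1}$ to $(u^n,\theta^n)$ while \eqref{v2.33}--\eqref{vx2.33} couple $(u^{n+1},\theta^{n+1})$ to $(b^n,\omega^n)$. Likewise the cross-functionals $\mathcal{E}_0(t)$ and $\int\partial^\alpha u\cdot\nabla\partial^\alpha\rho$ entering $\mathcal{E}_1$ are neither available nor needed for local existence. The paper therefore uses a much simpler direct estimate \eqref{v2.36}--\eqref{v2.38}: apply $\partial^\alpha_\beta$, test against $\partial^\alpha_\beta f^{n+1}$, use only the coercivity of $-\mathcal{L}$ on $\{\mathbf{I}-\mathbf{P}_0\}$, and add back $\|\mathbf{P}_0\partial^\alpha f^{n+1}\|_\nu^2\leq C\|\partial^\alpha f^{n+1}\|^2$ as a Gr\"onwall term---no macro-micro machinery at all.

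Second, and more concretely, your claim that every bad term is dominated by $\varepsilon\,\mathcal{D}^{n+1}+C_\varepsilon(A_0)(1+\mathcal{E}^{n+1})$ fails for the $\theta^n$-diffusion term $\theta^n M^{-1/2}\Delta_v(\sqrt{M}f^{n+1})$ in \eqref{v2.31}. After integration by parts in $v$ this contributes, at highest order,
\[
\Big|\int \theta^n\Big(|\nabla_v\partial^\alpha_\beta f^{n+1}|^2-\tfrac{|v|^2}{4}|\partial^\alpha_\beta f^{n+1}|^2\Big)\,{\rm d}x\,{\rm d}v\Big|\ \leq\ C\|\theta^n\|_{L^\infty}\|\partial^\alpha_\beta f^{n+1}\|_\nu^2\ \leq\ CA_0^{1/2}\,\mathcal{D}^{n+1},
\]
and since the $\nu$-norm is \emph{not} controlled by $\mathcal{E}^{n+1}$, no Young splitting can produce an $\varepsilon$-small $\mathcal{D}^{n+1}$ with remainder in $\mathcal{E}^{n+1}$. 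The only way to absorb this into the dissipation is to take $A_0$ itself small (so that $CA_0^{1/2}<\lambda$), which is exactly what the paper does---see the sentence after \eqref{v2.39}: ``Choosing $T^*\leq A_0/2$, and $A_0$ small enough''. Your prescription ``first fix $A_0\geq 2\mathcal{E}(0)$ and then shrink $T^*$'' omits this smallness and would not close. You implicitly acknowledge it in your first paragraph (``uniformly elliptic since $\|\theta^n\|_{H^4}$ is small''), but it has to be built into the induction, and the same issue recurs in the $H^3$ contraction step, where the paper again invokes smallness of $A_0$.

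The remaining parts---positivity of $F^{n+1}$ via the maximum principle, the contraction mechanism in $H^3$, and assertions (2)--(4)---are correctly outlined and match the paper.
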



\begin{proof} Let $T^*>0$ be a constant which will be fixed later.
For simplicity, without loss of generality  $(f^{n},\rho^{n},u^{n},\theta^{n})$ are assumed to be smooth enough, if not,
we can consider the  regularized iterative system as
\begin{align}
\partial_{t}f^{n+1,\varepsilon} +v\cdot \nabla_{x}f^{n+1,\varepsilon}-\mathcal{L}f ^{n+1,\varepsilon}=&
 -u^{n,\varepsilon}\cdot\Big(\nabla_{v}f^{n+1,\varepsilon}-\frac{v}{2}f^{n+1,\varepsilon}-v\sqrt{M}\Big)\nonumber\\
 &+\theta^{n,\varepsilon}M^{-\frac{1}{2}}\Delta_{v}\big(M+\sqrt{M}f^{n+1,\varepsilon}\big),\nonumber
  \end{align}
 \begin{align}
\partial_{t}\rho^{n+1,\varepsilon}&+u^{n,\varepsilon}\cdot \nabla \rho^{n+1,\varepsilon}+(1+\rho^{n,\varepsilon})\ \dv u^{n+1,\varepsilon}=0,\nonumber
 \end{align}
 \begin{align}
\partial_{t}u^{n+1,\varepsilon} -\frac{1}{1+\rho^{n,\varepsilon}}\Delta u^{n+1,\varepsilon}=&-u^{n,\varepsilon}\cdot \nabla u^{n,\varepsilon}-\nabla \theta^{n,\varepsilon}
-\frac{1+\theta^{n,\varepsilon}}{1+\rho^{n,\varepsilon}}\nabla \rho^{n,\varepsilon}\nonumber\\
&+\frac{1}{1+\rho^{n,\varepsilon}}\big(b^{n,\varepsilon}-u^{n,\varepsilon}-u^{n,\varepsilon}a^{n,\varepsilon}\big),\nonumber
 \end{align}
 \begin{align}
\partial_{t}\theta^{n+1,\varepsilon} -\frac{1}{1+\rho^{n,\varepsilon}}\Delta \theta^{n+1,\varepsilon}=&-u^{n,\varepsilon}\cdot\nabla\theta^{n,\varepsilon}
-\theta^{n,\varepsilon}\dv u^{n,\varepsilon}-\dv u^{n,\varepsilon}\nonumber\\
&+\frac{1}{1+\rho^{n,\varepsilon}}\big((u^{n,\varepsilon}-b^{n,\varepsilon})\cdot u^{n,\varepsilon}-u^{n,\varepsilon}\cdot b^{n,\varepsilon}+a^{n,\varepsilon}|u^{n,\varepsilon}|^{2}\big)\nonumber\\
&+\frac{\sqrt{3}}{1+\rho^{n,\varepsilon}}\big(\sqrt{2}\omega^{n,\varepsilon}-\sqrt{3}\theta^{n,\varepsilon}-\sqrt{3}a^{n,\varepsilon}\theta^{n,\varepsilon}\big),\nonumber
 \end{align}
 \begin{align}
 (f^{n+1,\varepsilon},\rho^{n+1,\varepsilon},u^{n+1,\varepsilon},\theta^{n+1,\varepsilon})(0)=(f^{\varepsilon}_{0},\rho^{\varepsilon}_{0},u^{\varepsilon}_{0},\theta^{\varepsilon}_{0}),\nonumber
\end{align}
for any $\varepsilon>0  $ with $(f^{\varepsilon}_{0},\rho^{\varepsilon}_{0},u^{\varepsilon}_{0},\theta^{\varepsilon}_{0})$  a smooth approximation of $(f_{0},\rho_{0},u_{0},\theta_{0})$ and pass to the limit
by letting $\varepsilon\rightarrow 0.$

Applying $\partial^{\alpha}_{x}$ with $|\alpha|\leq 4$ to the equation \eqref{v2.31}, multiplying the result by $\partial^{\alpha}_{x}f^{n+1}$
and then taking integration over $\mathbb{R}^3$, one has
\begin{align}
& \frac{1}{2}\frac{\rm d}{{\rm d}t}\|\partial^{\alpha}f^{n+1}\|^{2}+\int\langle -\mathcal{L}\partial^{\alpha}f^{n+1},\partial^{\alpha}f^{n+1}\rangle   \, {\rm d}x \nonumber\\
&=-\iint\partial^{\alpha}\Big\{u^{n}M^{-\frac{1}{2}}\nabla_{v}(M+\sqrt{M}f^{n+1})\Big\}\partial^{\alpha}f^{n+1}\, {\rm d}x  {\rm d}v\nonumber\\
&\quad+\iint\partial^{\alpha}\Big\{\theta^{n}M^{-\frac{1}{2}}\Delta_{v}(M+\sqrt{M}f^{n+1})\Big\}\partial^{\alpha}f^{n+1}\, {\rm d}x  {\rm d}v\nonumber\\
&\leq C\|(u^{n},\theta^{n})\|_{H^{4}}\|f^{n+1}\|_{L^{2}_{v}(H^{4})}\nonumber\\
&\quad+C\|u^{n}\|_{H^{4}}\|f^{n+1}\|_{L^{2}_{v}(H^{4})}\|\partial^{\alpha}f^{n+1}\|_{\nu}\nonumber\\
&\quad+C\|\theta^{n}\|_{H^{4}}\Big(\sum_{|\alpha'|\leq 4}\|\partial^{\alpha'f^{n+1}}\|_{\nu}\Big)\|\partial^{\alpha}f^{n+1}\|_{\nu}.\label{v2333}
\end{align}
Notice that   $$\int\langle -\mathcal{L}\partial^{\alpha}f^{n+1},\partial^{\alpha}f^{n+1}\rangle   \, {\rm d}x
\geq \lambda \|\{\mathbf{I}-\mathbf{P}_{0}\}\partial^{\alpha}f^{n+1}\|^{2}_{\nu}.$$
 By adding $\|\mathbf{P}_{0}\partial^{\alpha}f^{n+1}\|^{2}_{\nu}$ to
 the   inequality \eqref{v2333}, we  get  the sum on  $|\alpha|\leq 4$,
\begin{align}
\frac{1}{2}\frac{\rm d}{{\rm d}t}&\sum_{|\alpha|\leq 4}\|\partial^{\alpha}f^{n+1}\|^{2}+\lambda\sum_{|\alpha|\leq 4}\|\partial^{\alpha}f^{n+1}\|^{2}_{\nu}\nonumber\\
&\leq C\|f^{n+1}\|^{2}_{L^{2}_{v}(H^{4})}+C\|(u^{n},\theta^{n})\|_{H^{4}}\|f^{n+1}\|_{L^{2}_{v}(H^{4})}\nonumber\\
&\quad +C\|u^{n}\|_{H^{4}}\|f^{n+1}\|^{2}_{L^{2}_{v}(H^{4})}
 +C\|(u^{n},\theta^{n})\|_{H^{4}}\sum_{|\alpha|\leq 4}\|\partial^{\alpha}f^{n+1}\|^{2}_{\nu}.\nonumber
\end{align}
Similar argument gives, for any $0\leq t\leq T\leq T^*$,
\begin{align}
\frac{1}{2}\frac{\rm d}{{\rm d}t}&\|f^{n+1}\|^{2}_{H^{4}_{x,v}}+\lambda \sum_{|\alpha|+|\beta|\leq 4}\|\partial^{\alpha}_{\beta}f^{n+1}\|^{2}_{\nu}\nonumber\\
&\leq C\|f^{n+1}\|^{2}_{H^{4}_{x,v}}+C\|(u^{n},\theta^{n})\|^{2}_{H^{4}}+C\|u^{n}\|_{H^{4}}\|f^{n+1}\|^{2}_{H^{4}}\nonumber\\
&\quad+C\|(u^{n},\theta^{n})\|_{H^{4}}\sum_{|\alpha|+|\beta|\leq 4}\|\partial^{\alpha}_{\beta}f^{n+1}\|^{2}_{\nu}
.\label{v2.36}
\end{align}

Next, from \cite{MN},  
 the system \eqref{v2.32}-\eqref{v2.33} has a unique solution $(\rho^{n+1}, u^{n+1}, \theta^{n+1})$ such that
$\rho^{n+1}\geq \rho_{1}$, and
$$\rho^{n+1}\in C([0,T],H^{4}(\mathbb{R}^{3}))\cap C^{1}([0,T],H^{3}(\mathbb{R}^{3})),$$
$$u^{n+1},\theta^{n+1}\in C([0,T],H^{4}(\mathbb{R}^{3}))\cap C^{1}([0,T],H^{2}(\mathbb{R}^{3})).$$

Now we  estimate $\frac{\rm d}{{\rm d}t}\|(\rho^{n+1},u^{n+1},\theta^{n+1})\|^{2}_{H^{4}}$.
Applying $\partial^{\alpha}\,(|\alpha|\leq 4)$ to the system \eqref{v2.32}- \eqref{vx2.33}, multiplying the results by
$\partial^{\alpha}\rho^{n+1}$, $ \partial^{\alpha}u^{n+1},\partial^{\alpha}\theta^{n+1}$ respectively, and then taking integration and  summation, we get
\begin{align}
&\frac{1}{2}\frac{\rm d}{{\rm d}t}\Big\|\Big(\rho^{n+1},u^{n+1},\theta^{n+1}\Big)\Big\|^{2}_{H^{4}}
 +\lambda\sum_{|\alpha|\leq 4}\int\Big(|\nabla \partial^{\alpha}u^{n+1}|^{2}+|\nabla \partial^{\alpha}u^{n+1}|^{2}\Big){\rm d}x\nonumber\\
&\leq\, C(1+\|\rho^{n}\|^{2}_{H^{4}}+\|u^{n}\|_{H^{4}})\|\rho^{n+1}\|^{2}_{H^{4}}
+C(1+\|u\|^{2}_{H^{4}}+\|\rho^{n}\|^{8}_{H^{4}})\|(u^{n+1},\theta^{n+1})\|^{2}_{H^{4}}\nonumber\\
&\quad+C(1+\|\rho^{n}\|^{6}_{H^{4}})(1+\|\theta^{n}\|^{2}_{H^{4}})\|\rho^{n}\|^{2}_{H^{4}}
+C(1+\|f^{n}\|^{2}_{H^{4}}+\|u^{n}\|^{2}_{H^{4}})\|u^{n}\|^{2}_{H^{4}}\nonumber\\
&\quad+C(1+\|\rho^{n}\|^{6}_{H^{4}}+\|u^{n}\|^{2}_{H^{4}})\|\theta^{n}\|^{2}_{H^{4}}
+C(1+\|\theta^{n}\|^{2}_{H^{4}}+\|u^{n}\|^{2}_{H^{4}})\|f^{n}\|^{2}_{H^{4}}.\label{v2.37}
\end{align}
Adding up   \eqref{v2.36} and \eqref{v2.37} gives
\begin{align}
&\frac{1}{2}\frac{\rm d}{{\rm d}t}\Big(\|f^{n+1}\|^{2}_{H^{4}_{x,v}}+\|\rho^{n+1}\|^{2}_{H^{4}}+\|u^{n+1}\|^{2}_{H^{4}}+\|\theta^{n+1}\|^{2}_{H^{4}}\Big)\nonumber\\
&\quad+\lambda
\sum_{|\alpha|+|\beta|\leq 4}\|\partial^{\alpha}_{\beta}f^{n+1}\|^{2}_{\nu}+\lambda\sum_{|\alpha|\leq 4}\Big(\|\nabla\partial^{\alpha}u^{n+1}\|^{2}+\|\nabla\partial^{\alpha}\theta^{n+1}\|^{2}\Big)\nonumber\\
&\leq C(1+\|u^{n}\|^{2}_{H^{4}})\|f^{n+1}\|^{2}_{H^{4}_{x,v}}+C(1+\|\rho^{n}\|^{2}_{H^{4}}+\|u^{n}\|_{H^{4}})\|\rho^{n+1}\|^{2}_{H^{4}}\nonumber\\
&\quad+C(1+\|u\|^{2}_{H^{4}}+\|\rho^{n}\|^{8}_{H^{4}})\|(u^{n+1},\theta^{n+1})\|^{2}_{H^{4}}
\nonumber\\
&\quad+C(1+\|\rho^{n}\|^{6}_{H^{4}})(1+\|\theta^{n}\|^{2}_{H^{4}})\|\rho^{n}\|^{2}_{H^{4}}
+C(1+\|\rho^{n}\|^{6}_{H^{4}}+\|u^{n}\|^{2}_{H^{4}})\|\theta^{n}\|^{2}_{H^{4}}\nonumber\\
&\quad+C(1+\|\theta^{n}\|^{2}_{H^{4}}+\|u^{n}\|^{2}_{H^{4}})\|f^{n}\|^{2}_{H^{4}}+C(1+\|f^{n}\|^{2}_{H^{4}}+\|u^{n}\|^{2}_{H^{4}})\|u^{n}\|^{2}_{H^{4}}\nonumber\\
&\quad+C\|(u^{n},\theta^{n})\|_{H^{4}}\sum_{|\alpha|+|\beta|\leq 4}\|\partial^{\alpha}_{\beta}f^{n+1}\|^{2}_{\nu}.\label{v2.38}
\end{align}
Using induction, we may assume $A_{n}(T)\leq A_{0}$ and $A_{n}(0)\leq \frac{A_{0}}{2}$ for some $A_0>0$  with
$$A_{n}(T) :=\sup_{0\leq t\leq T}\Big\{\|\rho^{n}(t)\|^{2}_{H^{4}}+\|u^{n}(t)\|^{2}_{H^{4}}+\|\theta^{n}(t)\|^{2}_{H^{4}}+\|f^{n}(t)\|^{2}_{H^{4}_{x,v}}\Big\}.$$
Integrating \eqref{v2.38} over $[0,T]$ yields
\begin{align}
& A_{n+1}(T)+\lambda \int_{0}^{T}\Big\{\sum_{|\alpha|\leq 4}\Big(\|\nabla \partial^{\alpha}u^{n+1}\|^{2}+\|\nabla \partial^{\alpha}\theta^{n+1}\|^{2}\Big)
+\sum_{|\alpha|+|\beta|\leq 4}\|\partial^{\alpha}_{\beta}f^{n+1}\|^{2}_{\nu}\Big\}{\rm d} t\nonumber\\
&\leq A_{n+1}(0)+C(1+A^{\frac{1}{2}}_{n}(T)+A^{2}_{n}(T))A_{n+1}(T)T+CA_{n}(T)(1+A^{4}_{n}(T))T\nonumber\\
&\quad+CA^{\frac{1}{2}}_{n}(T)\int_{0}^{T}\sum_{|\alpha|+|\beta|\leq 4}\|\partial^{\alpha}_{\beta}f^{n+1}\|^{2}_{\nu}\,{\rm d} t\nonumber\\
&\leq\frac{A_{0}}{2}+C(1+A^{4}_{0})TA_{n+1}(T)+C(A_{0}+A^{5}_{0})T\nonumber\\
&\quad+CA^{\frac{1}{2}}_{0}\sum_{|\alpha|+|\beta|\leq 4}\int_{0}^{T}\|\partial^{\alpha}_{\beta}f^{n+1}\|^{2}_{\nu}\,{\rm d} t.\label{v2.39}
\end{align}
It is easy to obtain that for $t\leq T^{*}$,
$$\big(1-C(1+A^{4}_{0})T\big)A_{n+1}(T)\leq \frac{A_{0}}{2}+C(A_{0}+A^{5}_{0})T.$$
Choosing $T^*$ satisfying  $T^{*}\leq \frac{A_{0}}{2}$, and $A_{0}$   small enough, one gains
$$A_{n+1}\leq A_{0}.$$

By the equation \eqref{v2.31},  $F^{n+1}=M+\sqrt{M}f^{n+1}$ satisfies the following equation,
$$\partial_{t}F^{n+1}+v\cdot\nabla_{x}F^{n+1}+(u-v)\cdot \nabla_{v}F^{n+1}-(1+\theta^{n})\Delta_{v}F^{n+1}-3F^{n+1}=0,$$
on the basis of  the maximum principle, one achieves
$$F^{n+1}\geq 0.$$

Now we show that $\|f^{n+1}\|^{2}_{H^{4}_{x,v}}$ is continuous over $0\leq t\leq T^{*}$. \\
Actually, similar to the proof of  \eqref{v2.36}, the following inequality holds
\begin{align}
 &  \!\!\!\!\!\!\!\!\!\!\Big|\|f^{n+1}(t)\|^{2}_{H^{4}_{x,v}}-\|f^{n+1}(s)\|^{2}_{H^{4}_{x,v}}\Big|\nonumber\\
 =\,&\Big|\int_{s}^{t}\frac{\mathrm{d}}{\mathrm{d}\eta}\|f^{n+1}(\eta)\|^{2}_{H^{4}_{x,v}}\mathrm{d}\eta\Big|\nonumber\\
\leq\, & C A^{\frac{1}{2}}_{0}\sum_{|\alpha|+|\beta|\leq 4} \int_{s}^{t}\|\partial^{\alpha}_{\beta}f^{n+1}\|^{2}_{\nu}\mathrm{d}\eta
 +C(A_{0}+A^{\frac{3}{2}}_{0})|t-s|. \label{v2.40}
 \end{align}
Moreover, $\parallel\partial^{\alpha}_{\beta}f^{n+1}\parallel^{2}_{\nu}$ is integrable over $[0,T^{*}]$.
Thus, \eqref{v2.35} is true for $n+1$, namely it follows that \eqref{v2.35} holds  for any $n\geq 0$.

 By straightforward calculation, $f^{n+1}-f^{n},\rho^{n+1}-\rho^{n},u^{n+1}-u^{n},\theta^{n+1}-\theta^{n}$ satisfies the following system:
\begin{align}
&\partial_{t}(f^{n+1}-f^{n})+v\cdot \nabla_{x}(f^{n+1}-f^{n})-\mathcal{L}(f^{n+1}-f^{n})\nonumber\\
&\,=-u^{n}\cdot\big(\nabla_{v}-\frac{v}{2}\big)\big(f^{n+1}-f^{n}\big)-\big(u^{n}-u^{n-1}\big)\cdot\big(\nabla_{v}-\frac{v}{2}\big)f^{n}+M^{\frac{1}{2}}v\cdot\big(u^{n}-u^{n-1}\big)\nonumber\\
&\,\quad +\theta^{n}M^{-\frac{1}{2}}\Delta_{v}\Big(M^{\frac{1}{2}}\big(f^{n+1}-f^{n}\big)\Big)+M^{-\frac{1}{2}}\Delta_{v}M
\big(\theta^{n}-\theta^{n-1}\big),\nonumber\\
&\partial_{t}(\rho^{n+1}-\rho^{n})+u^{n}\cdot \nabla (\rho^{n+1}-\rho^{n})\nonumber\\
& \, =-(u^{n}-u^{n-1})\cdot \nabla \rho^{n}-(1+\rho^{n})\ \dv(u^{n+1}-u^{n})-(\rho^{n}-\rho^{n-1})\ \dv u^{n},\nonumber\\
&\partial_{t}(u^{n+1}-u^{n})-\frac{1}{1+\rho^{n}}\Delta(u^{n+1}-u^{n})\nonumber\\
&\,=\Big(\frac{1}{1+\rho^{n}}-\frac{1}{1+\rho^{n-1}}\Big)\Big(\Delta u^{n}+b^{n}-u^{n}-u^{n}a^{n}\Big)\nonumber\\
&\,\quad -\Big((u^{n}-u^{n-1})\cdot \nabla u^{n}+u^{n-1}\cdot \nabla(u^{n}-u^{n-1})\Big)-\nabla\Big(\theta^{n}-\theta^{n-1}\Big)
\nonumber\\
&\,\quad +\Big(\frac{1+\theta^{n}}{1+\rho^{n}}\nabla \big(\rho^{n}-\rho^{n-1}\big)+\big(\frac{1+\theta^{n}}{1+\rho^{n}}-\frac{1+\theta^{n-1}}{1+\rho^{n-1}}\big)\nabla \rho^{n-1}\Big)\nonumber\\
&\quad +\frac{1}{1+\rho^{n}}\Big(\big(b^{n}-b^{n-1}\big)-\big(u^{n}-u^{n-1}\big)-\big(u^{n}a^{n}-u^{n-1}a^{n-1}\big)\Big),\nonumber\\
&\partial_{t}(\theta^{n+1}-\theta^{n})-\frac{1}{1+\rho^{n}}\Delta(\theta^{n+1}-\theta^{n})\nonumber\\
&\,=-\Big(\big(u^{n}-u^{n-1}\big)\cdot \nabla \theta^{n}+u^{n-1}\cdot \nabla\big(\theta^{n}-\theta^{n-1}\big)\Big)+\dv \big(u^{n}-u^{n-1}\big)\nonumber\\
&\,\quad -\big(\theta^{n}-\theta^{n-1}\big)\dv u^{n}-\dv\big (u^{n}-u^{n-1}\big) \theta^{n}\nonumber\\
&\,\quad+\Big(\frac{1}{1+\rho^{n}}-\frac{1}{1+\rho^{n-1}}\Big)\Big(\Delta \theta^{n}+\sqrt{6}\omega^{n}-3\theta^{n}-3\theta^{n}a^{n}\Big)
\nonumber\\
&\,\quad+\Big(\frac{1}{1+\rho^{n}}-\frac{1}{1+\rho^{n-1}}\Big) \Big(|u^{n}|^{2}-2u^{n}\cdot b^{n}+a^{n}|u^{n}|^{2}\Big)\nonumber\\
&\,\quad +\frac{1}{1+\rho^{n}}\Big(|u^{n}|^{2}-|u^{n-1}|^{2}-2u^{n}\cdot b^{n}+2u^{n-1}\cdot b^{n-1}+a^{n}|u^{n}|^{2}-a^{n-1}|u^{n-1}|^{2}\Big)\nonumber\\
&\,\quad+\frac{1}{1+\rho^{n}}\Big(\sqrt{6}\big(\omega^{n}-\omega^{n-1}\big)-3\big(\theta^{n}-\theta^{n-1}\big)-3\big(a^{n}\theta^{n}-a^{n-1}\theta^{n-1}\big)\Big).\nonumber
\end{align}
Similarly to  \eqref{v2.38}, we obtain
\begin{align}
 &\frac{1}{2}\frac{\rm d}{{\rm d}t}\Big\{\|f^{n+1}-f^{n}\|^{2}_{H^{3}_{x,v}}+\|\rho^{n+1}-\rho^{n}\|^{2}_{H^{3}}+\|u^{n+1}-u^{n}\|^{2}_{H^{3}}+\|\theta^{n+1}-\theta^{n}\|^{2}_{H^{3}}\Big\}\nonumber\\
&\,\quad+\lambda\sum_{|\alpha|+|\beta|\leq 3}\|\partial^{\alpha}_{\beta}(f^{n+1}-f^{n})\|^{2}_{\nu}+\lambda\|\nabla(u^{n+1}-u^{n}),\nabla(\theta^{n+1}-\theta^{n})\|^{2}_{H^{3}}\nonumber\\
&\,\leq\mathcal{ C }
 \Big(\|f^{n+1}-f^{n}\|^{2}_{H^{3}_{x,v}}+\|\rho^{n+1}-\rho^{n}\|^{2}_{H^{3}}+\|u^{n+1}-u^{n}\|^{2}_{H^{3}}+\|\theta^{n+1}-\theta^{n}\|^{2}_{H^{3}}\Big)\nonumber\\
&\,\quad+\mathcal{ C }
 \Big(\|f^{n}-f^{n-1}\|^{2}_{H^{3}_{x,v}}+\|\rho^{n}-\rho^{n-1}\|^{2}_{H^{3}}+\|u^{n}-u^{n-1}\|^{2}_{H^{3}}+\|\theta^{n}-\theta^{n-1}\|^{2}_{H^{3}}\Big),\nonumber
\end{align}
where $\mathcal{ C }>0$ is a   constant  that depends on $\|(\rho^{n},\rho^{n-1})\|_{H^{4}}$, $\|(u^{n},u^{n-1})\|_{H^{4}}$, $\|(\theta^{n},\theta^{n-1})\|_{H^{4}}$,
$\|(f^{n},f^{n-1})\|_{H^{4}_{x,v}}$ and $\sum_{|\alpha|+|\beta|\leq 4}\|\partial^{\alpha}_{\beta}f^{n}\|^{2}_{\nu}$.
Due to $\mathcal{E}(0)$, $T^{*}$ and $A_{0}$  small enough, according to \eqref{v2.39}, we know that
\begin{align}
\sup_{n}\int_{0}^{T^{*}}\sum_{|\alpha|+|\beta|\leq 4}\|\partial^{\alpha}_{\beta}f^{n}\|^{2}_{\nu}\,\mathrm{d}s \nonumber
\end{align}
is also small enough. Therefore, there exists  $\delta \in (0, 1)$ such that
\begin{align}
&\sup_{0< t\leq T^{*}}\Big\{\|f^{n+1}-f^{n}\|_{H^{3}}+\|\rho^{n+1}-\rho^{n}\|_{H^{3}}+\|u^{n+1}-u^{n}\|_{H^{3}}+\|\theta^{n+1}-\theta^{n}\|_{H^{3}}\Big\}  \nonumber\\
&\quad\leq\delta\sup_{0< t\leq T^{*}}\Big\{\|f^{n}-f^{n-1}\|_{H^{3}}+\|\rho^{n}-\rho^{n-1}\|_{H^{3}}\nonumber\\
&\qquad\qquad+\|u^{n}-u^{n-1}\|_{H^{3}}+\|\theta^{n}-u^{n-1}\|_{H^{3}}\Big\}.\label{v2.41}
\end{align}
According to \eqref{v2.41}, we conclude that $(f^{n},\rho^{n},u^{n},\theta^{n})_{n\geq 0}$ is a Cauchy sequence in the Banach space
$C\big([0,T^{*}],H^{3}(\mathbb{R}^{3}\times\mathbb{R}^{3})\big)\times\Big(C\big([0,T^{*}],H^{3}(\mathbb{R}^{3})\big)\Big)^{3}$, so we denote
the  limit function of this sequence by $(f,\rho,u,\theta)$ and then $(f,\rho,u,\theta)$ satisfies the system
 \eqref{v1.5}-\eqref{v1.8} by letting $n\rightarrow \infty$.
The fact that $F^{n}(t,x,v)\geq 0$  and the Sobolev embedding theorem yields that
 $$F(t,x,v)\geq 0, \quad  \sup_{0\leq t\leq T^{*}}\|f(t)\|_{H^{4}_{x,v}}\leq A_{0}.$$
 Similar argument to \eqref{v2.40} yields that $f\in C\big([0,T^{*}],H^{3}(\mathbb{R}^{3}\times\mathbb{R}^{3})\big).$
Namely, we obtain that $(f,\rho,u,\theta)\in X(0,T^{*};A_{0})$.

 Finally, let $(\bar{f},\bar{\rho},\bar{u},\tilde{\theta})\in X(0,T^{*},A_{0})$  be another solution to the Cauchy problem \eqref{v1.5}-\eqref{v1.8}.
 Using similar process  to \eqref{v2.41},it follows that
 \begin{align}
 &\sup_{0< t\leq T^{*}}\Big\{\|f-\bar{f}\|_{H^{3}}+\|\rho-\bar{\rho}\|_{H^{3}}+\|u-\bar{u}\|_{H^{3}}+\|u-\bar{u}\|_{H^{3}}\Big\}  \nonumber\\
&\quad\leq\delta\sup_{0< t\leq T^{*}}\Big\{\|f-\bar{f}\|_{H^{3}}+\|\rho-\bar{\rho}\|_{H^{3}}+\|u-\bar{u}\|_{H^{3}}+\|\theta-\tilde{\theta}\|_{H^{3}}\Big\},\nonumber
\end{align}
for $0<\delta < 1$. So it is easy to conclude that $f\equiv \bar{f}, \rho\equiv \bar{\rho}, u\equiv\bar{u},\theta\equiv\tilde{\theta}$ ,i.e uniqueness holds.
\end{proof}

\begin{proof}[Proof of Theorem \ref{vt1.1}]
By $\mathcal{E}(0) \sim \|f_{0}\|^{2}_{H^{4}_{x,v}}+\|(\rho_{0},u_{0},\theta_{0})\|^{2}_{H^{4}}$,   there exists $\varepsilon_{0}$,
such that if
 $\|f_{0}\|_{H^{4}_{x,v}}+\|(\rho_{0},u_{0},\theta_{0})\|_{H^{4}}  <\varepsilon_{0}$, one gets $\mathcal{E}(0)\leq \frac{A_{0}}{2}$.
 Next, due to Theorem \ref{vt2.1},   the uniform priori estimate  \eqref{v2.30} holds for the local solution.
 Finally,  the standard bootstrap  arguments, as in \cite{DFT,YG,MN}
 eventually  conclude that the global existence and uniqueness part in Theorem \ref{vt1.1} hold.
\end{proof}


\bigskip

\section{Large time behavior}
In this section, our main concern is  the optimal time-decay rates of global solutions to the problem \eqref{v1.5}-\eqref{v1.8}.
First, we shall study the time-decay of the solution to the linearized Cauchy equations with a nonhomogeneous source, with the help of the Fourier analysis, we can obtain the algebraic decay when time tends to infinity, that is Theorem \ref{vt3.1} holds. Then, decomposing the nonlinear terms subtly and  choosing the appropriate functions  as  the nonhomogeneous source, together with Theorem \ref{vt3.1} and the energy-spectrum method  developed in \cite{DUYZ}, we finally conclude the time-decay rate \eqref{v1.11}.
To this end, we assume that all conditions in Theorem \ref{vt1.1} hold, and let $(f,\rho,u,\theta)$ be the solution to the system \eqref{v1.5}-\eqref{v1.8}.

We first  consider the linearized Cauchy equations with a nonhomogeneous source, namely,
\begin{equation}\label{v3.1}
\left\{\begin{aligned}
 &\partial_{t}f+v\cdot\nabla_{x}f-u\cdot v M^{\frac{1}{2}}-\theta(|v|^{2}-3)M^{\frac{1}{2}}=\mathcal{L}f+S_{f},\\
&\partial_{t}\rho+\dv u=0,\\
&\partial_{t}u-\Delta u+\nabla \theta+\nabla \rho+u-b=0,\\
&\partial_{t}\theta-\Delta \theta+\dv u+\sqrt{3}(\sqrt{3}\theta-\sqrt{2}\omega)=0.
\end{aligned}  \right.
\end{equation}
Here, $S_{f}$ has the following form
$$S_{f}=\dv _{v}G-\frac{1}{2}v\cdot G+\varphi,$$
with $G=(G_{i}),G_{i}=G_{i}(t,x,v)\in \mathbb{R}, 1\leq i\leq3,$ and $\varphi=\varphi(t,x,v)\in \mathbb{R}$, meanwhile we suppose that
\begin{align}
\mathbf{P}_{0}G_{i}=0, \quad \mathbf{P}_{1}G_{i}=0,\quad \mathbf{P}\varphi=0,\label{vx3.1}
\end{align}
for all $t\geq 0,x\in \mathbb{R}^{3}$.

For the linearized problem \eqref{v3.1}, we easily conclude that it is well-posed in $L^{2}$, i.e.,  the following lemma is true.
\begin{Lemma}\label{vl3.1}
There is a well-defined linear semigroup $E_{t}:\, L^{2}\rightarrow L^{2},\, t\geq 0,$ such that for any given $(f_{0},\rho_{0},u_{0},\theta_{0})\in L^{2}$,
  $E_{t}(f_{0},\rho_{0},u_{0},\theta_{0})$ is the unique distributional solution to \eqref{v3.1} with $S_{f}=0$. Further, for any  $(f_{0},\rho_{0},u_{0},\theta_{0})\in L^{2}$, then  there is a unique distributional solution to \eqref{v3.1} such that
\begin{align}
\big(f(t),\rho(t),u(t),\theta(t)\big)=E_{t}(f_{0},\rho_{0},u_{0},\theta_{0})+\int_{0}^{t}E_{t-\tau}\big(S_{f}(\tau),0,0,0\big)\, {\rm d} \tau .\label{v3.2}
\end{align}
\end{Lemma}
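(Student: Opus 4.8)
The plan is to rewrite the homogeneous part of \eqref{v3.1} as a linear evolution equation $\partial_{t}U=\mathcal{A}U$ for $U=(f,\rho,u,\theta)$ on the product space $L^{2}:=L^{2}_{x,v}\times L^{2}_{x}\times(L^{2}_{x})^{3}\times L^{2}_{x}$, where $\mathcal{A}$ collects the streaming operator $-v\cdot\nabla_{x}$, the diffusions $\Delta u,\Delta\theta$, the continuity/pressure couplings $-\dv u$ and $-\nabla\rho-\nabla\theta$, the Fokker--Planck operator $\mathcal{L}$, and the momentum/energy exchange terms $u\cdot v\sqrt{M}+(|v|^{2}-3)\sqrt{M}\theta$, $b-u$ and $\sqrt{6}\omega-3\theta$, with the natural maximal domain ($v\cdot\nabla_{x}f,\mathcal{L}f\in L^{2}_{x,v}$, $u,\theta\in H^{2}_{x}$, $\rho\in H^{1}_{x}$). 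The first step is the energy identity: pairing the four equations of \eqref{v3.1} with $S_{f}=0$ against $f,\rho,u,\theta$ and integrating, the streaming term vanishes by skew-adjointness, the couplings $\nabla\rho$--$\dv u$ and $\nabla\theta$--$\dv u$ cancel in pairs, and after completing squares the remaining coupling contributions $-2\langle u,b\rangle$ and $-2\sqrt{6}\langle\theta,\omega\rangle$ combine with the lower bound \eqref{v2.1} for $\langle-\mathcal{L}f,f\rangle$ (which supplies exactly $\|b\|^{2}+2\|\omega\|^{2}$) to give
\begin{align}
\frac{1}{2}\frac{\rm d}{{\rm d}t}\|(f,\rho,u,\theta)\|^{2}+\langle-\mathcal{L}\{\mathbf{I}-\mathbf{P}\}f,\{\mathbf{I}-\mathbf{P}\}f\rangle+\|\nabla u,\nabla\theta\|^{2}+\|u-b\|^{2}+\|\sqrt{2}\omega-\sqrt{3}\theta\|^{2}=0. \nonumber
\end{align}
In particular $\mathrm{Re}\,\langle\mathcal{A}U,U\rangle\leq0$, so $\mathcal{A}$ is dissipative on $L^{2}$.

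The second step is to produce the semigroup. Write $\mathcal{A}=\mathcal{A}_{0}+B$, where $\mathcal{A}_{0}$ is the generator of the decoupled dynamics --- the well-known kinetic Fokker--Planck semigroup on $L^{2}_{x,v}$ together with the standard linearized compressible Navier--Stokes--Fourier semigroup on $(L^{2}_{x})^{5}$ --- and $B$ is the \emph{bounded} operator carrying the kinetic--fluid momentum/energy exchange only (all the maps involved --- $f\mapsto b^{f}$, $f\mapsto\omega^{f}$, $(u,\theta)\mapsto u\cdot v\sqrt{M}+(|v|^{2}-3)\sqrt{M}\theta$ --- being bounded between the relevant $L^{2}$ spaces). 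By the bounded perturbation theorem $\mathcal{A}$ generates a $C_{0}$-semigroup $E_{t}$, and the energy identity above forces $\|E_{t}\|\leq1$. Alternatively, since $\Omega=\mathbb{R}^{3}$ one may build $E_{t}$ fibrewise by Fourier transform in $x$: for each $\xi$ the transformed generator $\widehat{\mathcal{A}}(\xi)$ on $L^{2}_{v}\times\mathbb{C}^{5}$ is a lower-order perturbation of $\mathcal{L}$, inherits the frequency-localized dissipativity, and hence generates a contraction semigroup $e^{t\widehat{\mathcal{A}}(\xi)}$; one sets $\widehat{E_{t}U_{0}}(\xi):=e^{t\widehat{\mathcal{A}}(\xi)}\widehat{U_{0}}(\xi)$, the strong continuity and contractivity on $L^{2}$ following from Plancherel and dominated convergence, and this is the construction that meshes with the Fourier-based decay analysis in the rest of Section~4. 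In either case $E_{t}(f_{0},\rho_{0},u_{0},\theta_{0})$ is the unique distributional solution of \eqref{v3.1} with $S_{f}=0$, uniqueness coming from the energy identity applied to the difference of two solutions after a routine mollification in $(x,v)$ to legitimize the integration by parts.

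For the inhomogeneous problem I would simply \emph{define} $(f,\rho,u,\theta)$ by the Duhamel formula \eqref{v3.2} and check that it solves \eqref{v3.1} distributionally, either by differentiating \eqref{v3.2} and invoking the semigroup property or by verifying the weak formulation directly. The only delicate point is that, when $G$ and $\varphi$ are merely $L^{2}$ in $(x,v)$, the source $S_{f}=\dv_{v}G-\frac{1}{2}v\cdot G+\varphi$ is only a first-order distribution in $v$, so $E_{t-\tau}(S_{f}(\tau),0,0,0)$ must be interpreted by duality with the adjoint semigroup acting on test functions; this is precisely where the constraints \eqref{vx3.1} enter, since they guarantee that any such pairing collapses to the bounded expression $\langle G,-(\nabla_{v}+\frac{v}{2})\psi\rangle+\langle\varphi,\psi\rangle$ and that the source feeds only the microscopic components on which the $\nu$-weighted dissipation is available. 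Concretely one first takes $G,\varphi$ smooth and compactly supported, for which \eqref{v3.2} is a classical solution, and then passes to the limit using $\|E_{t}\|\leq1$ and the energy identity; uniqueness for \eqref{v3.1} then reduces to the homogeneous case by subtraction. I expect this last point --- making the velocity-unbounded source rigorous through \eqref{vx3.1} and the distributional framework --- to be the only genuine obstacle, the semigroup construction itself being entirely routine, which is why the statement can be asserted ``easily''.
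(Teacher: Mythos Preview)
Your argument is correct and considerably more detailed than what the paper actually does. The paper's proof of this lemma is a two-line sketch: it simply asserts that the well-posedness part follows by the \emph{same argument as the local existence theorem} (Theorem~\ref{vt2.1}), i.e.\ by running the iteration scheme \eqref{v2.31}--\eqref{vx2.33} in the much simpler linear setting, and that the Duhamel formula \eqref{v3.2} is then a direct computation. In other words, the paper leverages the nonlinear machinery already built in Section~3.2 rather than constructing the semigroup from scratch.

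Your route is genuinely different and, for a purely linear statement, arguably cleaner: you identify the generator $\mathcal{A}$, establish dissipativity via the exact energy identity (this is the linear version of Proposition~\ref{vl2.2} with all nonlinear remainders absent), and then produce $E_{t}$ either by bounded perturbation of the decoupled kinetic Fokker--Planck/linearized Navier--Stokes--Fourier generator or by the fibrewise Fourier construction. The latter is exactly the viewpoint the paper adopts in the \emph{next} result, Theorem~\ref{vt3.1}, so your approach dovetails nicely with the subsequent decay analysis, whereas the paper's approach has the advantage of costing zero additional pages. Your discussion of how the constraints \eqref{vx3.1} tame the velocity-unbounded source $S_{f}$ in the Duhamel integral is a point the paper leaves entirely implicit; it is not needed for the bare statement of the lemma but is precisely what makes \eqref{v3.4} work later, so it is a welcome clarification.
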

\begin{proof}
Similar argument as  the local existence theorem can give the well-posedness part,
thus we do not repeat here, and the variation of constants formula \eqref{v3.2} is again true by a direct computation.
\end{proof}

We first quote two lemmas of \cite{CDM} for later proofs.
\begin{Lemma}[\cite{CDM}]\label{vl3.2}
Given any $0 < \beta_{1}\neq 1$ and $\beta_{2}>1,$
$$\int_{0}^{t}(1+t-s)^{-\beta_{1}}(1+s)^{-\beta_{2}}\, {\rm d}s\leq C(1+t)^{-\min\{\beta_{1},\beta_{2}\}}$$
for all $t\geq 0.$
\end{Lemma}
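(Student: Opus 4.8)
The plan is to establish this elementary convolution inequality by the classical device of splitting the time integral at its midpoint and, on each half, bounding the factor that is large there by its value at the endpoint. First I would dispose of the range $0\le t\le 1$: there both $1+t-s$ and $1+s$ lie in $[1,2]$, so the integrand is $O(1)$, the integral is at most $t\le 1$, and the claimed bound is trivial since $(1+t)^{-\min\{\beta_1,\beta_2\}}$ is bounded below by a positive constant on $[0,1]$. Hence we may assume $t\ge 1$ and write $\int_0^t=\int_0^{t/2}+\int_{t/2}^{t}$.

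On $[0,t/2]$ one has $1+t-s\ge 1+t/2\ge \tfrac12(1+t)$, so $(1+t-s)^{-\beta_1}\le C(1+t)^{-\beta_1}$, while $\int_0^{t/2}(1+s)^{-\beta_2}\,{\rm d}s\le \int_0^\infty(1+s)^{-\beta_2}\,{\rm d}s=(\beta_2-1)^{-1}$ converges because $\beta_2>1$; thus this piece is bounded by $C(1+t)^{-\beta_1}$. On $[t/2,t]$ one has $1+s\ge\tfrac12(1+t)$, so $(1+s)^{-\beta_2}\le C(1+t)^{-\beta_2}$, and the substitution $\sigma=t-s$ turns the remaining integral into $\int_0^{t/2}(1+\sigma)^{-\beta_1}\,{\rm d}\sigma$. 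This is the only place the hypothesis $\beta_1\ne 1$ is used: if $\beta_1>1$ the integral is bounded by $(\beta_1-1)^{-1}$, giving a contribution $\le C(1+t)^{-\beta_2}$; if $0<\beta_1<1$ the integral equals $\frac{(1+t/2)^{1-\beta_1}-1}{1-\beta_1}\le C(1+t)^{1-\beta_1}$, so the contribution is $\le C(1+t)^{1-\beta_1-\beta_2}$, and since $\beta_2>1$ we have $1-\beta_1-\beta_2<-\beta_1$, hence this is again $\le C(1+t)^{-\beta_1}$.

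Collecting the two pieces, the integral is at most $C(1+t)^{-\beta_1}+C(1+t)^{-\min\{\beta_1,\beta_2\}}\le C(1+t)^{-\min\{\beta_1,\beta_2\}}$, which is the assertion. The step I expect to require the most care—though it is bookkeeping rather than a genuine obstacle—is the subcase $0<\beta_1<1$ in the estimate over $[t/2,t]$, where the inner integral grows like $(1+t)^{1-\beta_1}$ and one must verify that the extra decay $(1+t)^{-\beta_2}$ with $\beta_2>1$ suffices to recover the exponent $-\min\{\beta_1,\beta_2\}=-\beta_1$. The exclusion $\beta_1\ne1$ serves only to avoid the borderline logarithmic growth $\int_0^{t/2}(1+\sigma)^{-1}\,{\rm d}\sigma=\log(1+t/2)$, which would not be absorbed cleanly by this scheme.
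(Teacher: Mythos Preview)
Your argument is correct and is the standard midpoint-splitting proof of this convolution inequality. Note, however, that the paper does not supply its own proof of this lemma: it is quoted verbatim from \cite{CDM} and used as a black box, so there is no in-paper argument to compare against. Your write-up would serve perfectly well as the missing justification.
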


\begin{Lemma}[\cite{CDM}]\label{vl3.3}
Let $\gamma> 1$ and $g_{1},g_{2}\in C^{0}(\mathbb{R}_{+},\mathbb{R}_{+})$ with $g_{1}(0)=0.$ For $A\in \mathbb{R}_{+}$, define
$\mathcal{\beta}_{A}:=\{y\in C^{0}(\mathbb{R}_{+},\mathbb{R}_{+})| \; y\leq A+g_{1}(A)y+g_{2}(A)y^{\gamma},y(0)\leq A\}$. Then, there exists a constant
$A_{0}\in (0,\min\{A_{1},A_{2}\})$ such that for any $0< A<A_{0}$,
$$y\in \mathcal{\beta}_{A}\Longrightarrow \sup_{t\geq 0}y(t) \leq 2A.$$
\end{Lemma}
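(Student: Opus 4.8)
The plan is to establish Lemma \ref{vl3.3} by a standard continuity (bootstrap) argument: the inequality defining $\beta_A$ is self-improving as soon as one knows a priori that $y$ is not too large, and for small data the superlinear term $g_2(A)y^{\gamma}$ is negligible precisely because $\gamma>1$.

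First I would fix the smallness thresholds once and for all. Since $g_1$ is continuous with $g_1(0)=0$, since $g_2$ is continuous (hence locally bounded) near $0$, and since $\gamma>1$ makes $A^{\gamma-1}\to0$ as $A\to0^+$, one may choose $A_0\in(0,\min\{A_1,A_2\})$ small enough that
\begin{align}
g_1(A)\le\tfrac18,\qquad 2^{\gamma}g_2(A)\,A^{\gamma-1}\le\tfrac14,\qquad\text{for all }0<A<A_0, \nonumber
\end{align}
where $A_1,A_2$ are the thresholds from \cite{CDM} below which $g_1$ and $g_2$ are controlled. These two conditions are exactly what makes the map $y\mapsto A+g_1(A)y+g_2(A)y^{\gamma}$ map the interval $[0,2A]$ strictly into itself.

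Next, fix $0<A<A_0$ and $y\in\beta_A$, and run the continuity argument. Since $y\in C^0(\mathbb{R}_+,\mathbb{R}_+)$ and $y(0)\le A<2A$, the set $T:=\{t\ge0:\ y(s)\le 2A\text{ for all }s\in[0,t]\}$ is a nonempty interval; set $T^{\ast}:=\sup T\in(0,\infty]$. Assume for contradiction that $T^{\ast}<\infty$. Then by continuity $y(t)\le 2A$ on $[0,T^{\ast}]$ and $y(T^{\ast})=2A$. Inserting $y(t)\le 2A$ into the defining inequality and using the choice of $A_0$,
\begin{align}
y(t)\le A+g_1(A)(2A)+g_2(A)(2A)^{\gamma}=A\bigl(1+2g_1(A)+2^{\gamma}g_2(A)A^{\gamma-1}\bigr)\le\tfrac32A \nonumber
\end{align}
for all $t\in[0,T^{\ast}]$; in particular $y(T^{\ast})\le\tfrac32A<2A$, contradicting $y(T^{\ast})=2A$. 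Hence $T^{\ast}=\infty$, i.e.\ $y(t)\le 2A$ (indeed $y(t)\le\tfrac32A$) for every $t\ge0$, which is the claim.

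There is no genuinely hard step here; the only points requiring care are the ordering of the choices — $A_0$ must be fixed \emph{before} the bootstrap so that the two smallness inequalities hold uniformly on $(0,A_0)$ — and the fact that the superlinear term must be absorbed using $\gamma>1$ rather than any finer property of $g_2$, while the hypothesis $g_1(0)=0$ is what lets the conclusion retain the sharp constant $2A$. This lemma will then be applied in the next section with $y(t)$ taken to be a time-weighted norm such as $\sup_{0\le s\le t}(1+s)^{3/4}\bigl(\|f(s)\|_{H^4_{x,v}}+\|(\rho,u,\theta)(s)\|_{H^4}\bigr)$, in combination with Theorem \ref{vt3.1}, Lemma \ref{vl3.2} and the energy-spectrum method, to produce the optimal rate \eqref{v1.11}.
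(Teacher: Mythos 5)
Your proof is correct, and since the paper itself gives no proof of Lemma~\ref{vl3.3} but merely cites it from \cite{CDM}, there is no in-paper argument to compare against; your continuity (bootstrap) argument is exactly the standard mechanism behind this type of nonlinear closure lemma. The key points are all in order: the choice of $A_0$ uses $g_1(0)=0$ with continuity of $g_1$ to absorb the linear term, uses $\gamma>1$ together with local boundedness of $g_2$ near $0$ to make $g_2(A)A^{\gamma-1}$ small, and these two smallness conditions are fixed uniformly on $(0,A_0)$ \emph{before} the bootstrap begins; the continuity argument then correctly exploits $y(0)\le A<2A$ and the intermediate value property to rule out a first exit time, since the defining inequality improves $y\le 2A$ to $y\le \tfrac32 A$. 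The only cosmetic point worth noting is that the constants $A_1,A_2$ appearing in the statement are never defined in the present paper (they are vestiges of the version of the lemma in \cite{CDM}); your handling of them as external smallness thresholds is the sensible reading.
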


\begin{Theorem}\label{vt3.1}
Let $1\leq q\leq 2$ and $(f_{0},\rho_{0},u_{0},\theta_{0})\in L^{2}$. For any $\alpha,\alpha'$ with $\alpha'\leq \alpha$ and $m=|\alpha-\alpha'|$,
\begin{align}
\|\partial^{\alpha}E_{t}(f_{0},\rho_{0},u_{0},\theta_{0})\|_{L^{2}}\leq\,& C(1+t)^{-\sigma_{q,m}}\big(\|\partial^{\alpha'}(f_{0},\rho_{0},u_{0},\theta_{0})\|_{\mathcal{Z}_{q}}\nonumber\\
&+\|\partial^{\alpha}(f_{0},\rho_{0},u_{0},\theta_{0})\|_{L^{2}}\big), \label{v3.3}
\end{align}
and
\begin{align}
\Big\|\partial^{\alpha}&\int_{0}^{t}E_{t-\tau}\big(S_{f},0,0,0\big)\, {\rm d} \tau \Big\|^{2}_{L^{2}}
\leq C\int_{0}^{t}(1+t-\tau)^{-2\sigma_{q,m}}\nonumber\\
&\times \big(\|\partial^{\alpha'}(G(\tau),\nu^{-\frac{1}{2}}\varphi(\tau))\|^{2}_{Z_{q}}
+\|\partial^{\alpha}(G(\tau),\nu^{-\frac{1}{2}}\varphi(\tau))\|^{2}_{L^{2}}\big)\, {\rm d} \tau , \label{v3.4}
\end{align}
hold for $t\geq 0$, where C is a positive constant depending only on $m,q$ and
$$\sigma_{q,m}=\frac{3}{2}\Big(\frac{1}{q}-\frac{1}{2}\Big)+\frac{m}{2}.$$
\end{Theorem}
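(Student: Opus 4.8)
The plan is to pass to the $x$-Fourier variable $\xi$ and regard the linear system \eqref{v3.1} as a family, indexed by $\xi\in\mathbb{R}^{3}$, of evolution equations in $t$ on the Hilbert space $L^{2}_{v}$ for the vector $\hat{U}(t,\xi)=\big(\hat{f},\hat{\rho},\hat{u},\hat{\theta}\big)(t,\xi)$. The core of the argument is the pointwise-in-$\xi$ decay estimate
\[
|\hat{U}(t,\xi)|^{2}\le C\exp\Big(-\lambda\,\frac{|\xi|^{2}}{1+|\xi|^{2}}\,t\Big)\,|\hat{U}(0,\xi)|^{2},\qquad t\ge0,
\]
for the homogeneous problem ($S_{f}=0$), together with its inhomogeneous Duhamel analogue. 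Granting these, both \eqref{v3.3} and \eqref{v3.4} follow from a low/high frequency split of $\|\partial^{\alpha}\cdot\|_{L^{2}}^{2}=\int|\xi|^{2|\alpha|}|\widehat{\;\cdot\;}(t,\xi)|^{2}\,{\rm d}\xi$. On $\{|\xi|\le1\}$ one has $\frac{|\xi|^{2}}{1+|\xi|^{2}}\sim|\xi|^{2}$; writing $|\xi|^{2|\alpha|}=|\xi|^{2|\alpha'|}|\xi|^{2m}$, the factor $|\xi|^{2|\alpha'|}$ is absorbed into $\partial^{\alpha'}$ of the data, and by Hausdorff--Young ($\|\hat{g}\|_{L^{q'}_{\xi}L^{2}_{v}}\le\|g\|_{Z_{q}}$ for $1\le q\le2$, via Minkowski's integral inequality) followed by Hölder in $\xi$ with exponents $\tfrac{q'}{2}$ and its conjugate, the remaining integral $\int_{|\xi|\le1}|\xi|^{2m}e^{-\lambda|\xi|^{2}t}\,|\widehat{\partial^{\alpha'}(\cdot)}|^{2}\,{\rm d}\xi$ is $\le C(1+t)^{-2\sigma_{q,m}}\|\partial^{\alpha'}(\cdot)\|_{\mathcal{Z}_{q}}^{2}$ (the $(\rho,u,\theta)$-parts handled by $\sup_{\xi}|\widehat{\partial^{\alpha'}(\rho,u,\theta)}(\xi)|\le\|\partial^{\alpha'}(\rho,u,\theta)\|_{L^{1}}$). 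On $\{|\xi|\ge1\}$ one has $\frac{|\xi|^{2}}{1+|\xi|^{2}}\sim1$, so $e^{-\lambda|\xi|^{2}t/(1+|\xi|^{2})}\le Ce^{-\lambda't}\le C(1+t)^{-2\sigma_{q,m}}$ and Plancherel controls the rest by $\|\partial^{\alpha}(\cdot)\|_{L^{2}}^{2}$.

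\textbf{The pointwise frequency estimate.} I would begin from the natural $L^{2}_{v}$ energy identity: multiplying the $x$-Fourier transform of \eqref{v3.1} with $S_{f}=0$ by $(\overline{\hat{f}},\overline{\hat{\rho}},\overline{\hat{u}},\overline{\hat{\theta}})$, integrating in $v$, taking the real part, using $\hat{b}=b^{\hat{f}}$, $\hat{\omega}=\omega^{\hat{f}}$ and the coercivity \eqref{v2.1} of $\mathcal{L}$ gives, for each fixed $\xi$,
\[
\frac{1}{2}\frac{\rm d}{{\rm d}t}|\hat{U}|^{2}+\lambda\Big(|\{\mathbf{I}-\mathbf{P}\}\hat{f}|^{2}_{\nu}+|\hat{b}-\hat{u}|^{2}+|\sqrt{2}\,\hat{\omega}-\sqrt{3}\,\hat{\theta}|^{2}+|\xi|^{2}|\hat{u}|^{2}+|\xi|^{2}|\hat{\theta}|^{2}\Big)\le0 .
\]
This dissipation is only partial --- it does not control $|\xi|^{2}\big(|\hat{a}|^{2}+|\hat{\rho}|^{2}\big)$ and degenerates at $\xi=0$ for several components. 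To recover the missing part I would use the $x$-Fourier transforms of the linearized analogues of the macroscopic identities \eqref{v2.13}--\eqref{vx2.15} and the relation $\partial_{t}\hat{\rho}+i\xi\cdot\hat{u}=0$ to build a Kawashima-type interaction functional $\mathcal{G}(t,\xi)$ --- a combination of real parts of inner products of the type $\langle\,\widehat{\partial_{j}b_{i}+\partial_{i}b_{j}}\,,\,\widehat{\Gamma_{i,j}\{\mathbf{I}-\mathbf{P}\}f}\,\rangle$, $\langle\,\widehat{\partial_{i}\omega}\,,\,\widehat{Q_{i}\{\mathbf{I}-\mathbf{P}\}f}\,\rangle$, $\langle\,\widehat{\partial_{i}a}\,,\,\cdots\rangle$ and $\langle\,\hat{u}\,,\,\widehat{\nabla\rho}\,\rangle$, precisely the combinations already used in $\mathcal{E}_{0}(t)$ and in Propositions \ref{vl2.4}--\ref{vl2.5} --- whose time derivative reproduces $\lambda|\xi|^{2}\big(|\hat{a}|^{2}+|\hat{b}|^{2}+|\hat{\omega}|^{2}+|\hat{\rho}|^{2}\big)$ modulo a small multiple of the dissipation already in hand. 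Setting $\mathcal{E}(t,\xi):=|\hat{U}(t,\xi)|^{2}+\kappa\,\frac{1}{1+|\xi|^{2}}\,\mathcal{G}(t,\xi)$ with $\kappa>0$ small makes $\mathcal{E}\sim|\hat{U}|^{2}$ uniformly in $\xi$ (the weight $\frac{1}{1+|\xi|^{2}}$ tames the extra $|\xi|$'s in $\mathcal{G}$) and yields $\frac{\rm d}{{\rm d}t}\mathcal{E}+\lambda\frac{|\xi|^{2}}{1+|\xi|^{2}}\mathcal{E}\le0$; Grönwall then gives the displayed pointwise bound.

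\textbf{The source term.} For \eqref{v3.4} I would run the same scheme directly on $W(t):=\int_{0}^{t}E_{t-\tau}(S_{f}(\tau),0,0,0)\,{\rm d}\tau$, which by Lemma \ref{vl3.1} solves \eqref{v3.1}. In the Fourier energy identity for $\hat{W}$ the source contributes $\mathrm{Re}\,\langle\hat{S}_{f},\hat{f}_{W}\rangle$, and since $S_{f}=\dv_{v}G-\tfrac12 v\cdot G+\varphi$ with the orthogonality conditions \eqref{vx3.1}, an integration by parts in $v$ turns this into $-\mathrm{Re}\,\big\langle\hat{G},(\nabla_{v}+\tfrac{v}{2})\hat{f}_{W}\big\rangle+\mathrm{Re}\,\langle\hat{\varphi},\hat{f}_{W}\rangle$; because $\mathbf{P}_{0}G_{i}=\mathbf{P}_{1}G_{i}=0$ and $\mathbf{P}\varphi=0$ the $\mathbf{P}$-part of $\hat{f}_{W}$ produces no uncontrolled contribution, so this term is $\le\eta\,|\{\mathbf{I}-\mathbf{P}\}\hat{f}_{W}|^{2}_{\nu}+C_{\eta}\big(|\hat{G}|^{2}+|\nu^{-\frac{1}{2}}\hat{\varphi}|^{2}\big)$. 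Absorbing the $\eta$-part, the Lyapunov functional $\mathcal{E}_{W}(t,\xi)$ built as above satisfies $\frac{\rm d}{{\rm d}t}\mathcal{E}_{W}+\lambda\frac{|\xi|^{2}}{1+|\xi|^{2}}\mathcal{E}_{W}\le C\big(|\hat{G}(t,\xi)|^{2}+|\nu^{-\frac{1}{2}}\hat{\varphi}(t,\xi)|^{2}\big)$ with $\mathcal{E}_{W}(0,\xi)=0$, whence
\[
|\hat{W}(t,\xi)|^{2}\le C\int_{0}^{t}\exp\!\Big(-\lambda\frac{|\xi|^{2}}{1+|\xi|^{2}}(t-\tau)\Big)\big(|\hat{G}(\tau,\xi)|^{2}+|\nu^{-\frac{1}{2}}\hat{\varphi}(\tau,\xi)|^{2}\big)\,{\rm d}\tau .
\]
Multiplying by $|\xi|^{2|\alpha|}$, integrating in $\xi$, interchanging the $\xi$- and $\tau$-integrals, and running the same low/high frequency split inside the $\tau$-integral yields precisely \eqref{v3.4}, the time weight $(1+t-\tau)^{-2\sigma_{q,m}}$ appearing directly from the low-frequency Hölder estimate (so that no time-convolution lemma is needed at this stage).

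\textbf{Main difficulty.} The crux is the construction of the interaction functional $\mathcal{G}(t,\xi)$ and the proof that $\mathcal{E}(t,\xi)\sim|\hat{U}(t,\xi)|^{2}$ with dissipation rate of the correct shape $\frac{|\xi|^{2}}{1+|\xi|^{2}}$: the raw energy estimate dissipates only the microscopic part together with $\hat{b}-\hat{u}$, $\sqrt{2}\hat{\omega}-\sqrt{3}\hat{\theta}$ and $|\xi|^{2}(|\hat{u}|^{2}+|\hat{\theta}|^{2})$, so one must extract dissipation of $|\xi|^{2}(|\hat{a}|^{2}+|\hat{b}|^{2}+|\hat{\omega}|^{2}+|\hat{\rho}|^{2})$ out of the (elliptic-in-$x$) macroscopic equations while keeping every cross term controlled uniformly in $\xi$ --- this is the Fourier-space counterpart of Propositions \ref{vl2.4}--\ref{vl2.5}. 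Once the dissipative differential inequality for $\mathcal{E}(t,\xi)$ is in place, the decay bounds \eqref{v3.3}--\eqref{v3.4} are routine Hausdorff--Young and Hölder bookkeeping.
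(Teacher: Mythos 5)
Your proposal is correct and follows essentially the same route as the paper: the pointwise-in-$\xi$ frequency estimate is obtained by combining the raw $L^{2}_{v}$ Fourier energy identity (giving microscopic dissipation, $|\hat u-\hat b|^2$, $|\sqrt2\hat\omega-\sqrt3\hat\theta|^2$, and $|\xi|^2(|\hat u|^2+|\hat\theta|^2)$) with a Kawashima-type interaction functional built from the Fourier-transformed macroscopic moment equations and $\langle\hat u\,|\,i\xi\hat\rho\rangle$, weighted by $\frac{1}{1+|\xi|^{2}}$, exactly as in the paper's $\mathcal{\tilde E}$ and $\mathcal{E}_{\mathcal F}$. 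The only difference is cosmetic: you spell out the final low/high-frequency split with Hausdorff--Young and Hölder that converts the Grönwall bound into \eqref{v3.3}--\eqref{v3.4}, whereas the paper delegates that bookkeeping to \cite[Theorem 3.1]{DFT}.
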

\begin{proof}
Applying the Fourier transform to \eqref{v3.1} in $x$, we obtain
\begin{equation}\label{v3.5}
\left\{\begin{aligned}
 &\partial_{t}\hat{f}+i v\cdot \xi \hat{f}-\hat{u}\cdot v M^{\frac{1}{2}}-\hat{\theta}(|v|^{2}-3)M^{\frac{1}{2}}=\mathcal{L}\hat{f}
 +\nabla_{v}\cdot\hat{G}-\frac{1}{2}v\cdot\hat{G}+\hat{\varphi},\\
&\partial_{t}\hat{\rho}+i\xi\cdot \hat{u}=0,\\
&\partial_{t}\hat{u}+|\xi|^{2} \hat{u}+i\xi \hat{\theta}+i\xi \hat{\rho}+\hat{u}-\hat{b}=0,\\
&\partial_{t}\hat{\theta}+|\xi|^{2} \hat{\theta}+i\xi \hat{u}+\sqrt{3}(\sqrt{3}\hat{\theta}-\sqrt{2}\hat{\omega})=0.
\end{aligned}  \right.
\end{equation}
By taking the inner product of the  equations in \eqref{v3.5} with the conjugate of $\hat{f}$and integrating in $v$, its real part gives
\begin{align}
\frac{1}{2}\partial_{t}&\|\hat{f}\|^{2}_{L^{2}_{v}}+Re\int_{R^{3}}\langle-\mathcal{L}\{\mathbf{I-P}\}\hat{f}|\{\mathbf{I-P}\}\hat{f}\rangle\, {\rm d}v
+|\hat{b}|^{2}+2|\hat{\omega}|^{2}-Re\langle \hat{u}|\hat{b}\rangle-Re\langle \hat{\theta}|\sqrt{6}\hat{\omega}\rangle\nonumber\\
&=Re\int_{R^{3}}\langle\nabla_{v}\cdot G-\frac{1}{2}v\cdot \hat{G}|\hat{f}\rangle\, {\rm d}v
+Re\int_{R^{3}}\langle\hat{\varphi}|\hat{f}\rangle\, {\rm d}v\nonumber\\
&=Re\int_{R^{3}}\langle\nabla_{v}\cdot G-\frac{1}{2}v\cdot \hat{G}|\{\mathbf{I-P}\}\hat{f}\rangle\, {\rm d}v
+Re\int_{R^{3}}\langle\hat{\varphi}|\{\mathbf{I-P}\}\hat{f}\rangle\, {\rm d}v,\nonumber
\end{align}
here, on the basis of the assumptions  \eqref{vx3.1}, we can calculate
\begin{align}
\int_{R^{3}}\langle\nabla_{v}\cdot G-\frac{1}{2}v\cdot \hat{G}|\mathbf{P}\hat{f}\rangle\, {\rm d}v=0,\quad
\int_{R^{3}}\langle\hat{\varphi}|\{\mathbf{P}\}\hat{f}\rangle\, {\rm d}v=0.\nonumber
\end{align}
Similarly, from the last three equations in \eqref{v3.1} we have
\begin{align}
&\frac{1}{2}|\hat{\rho}|^{2}+Re\langle i\xi\hat{u}|\hat{\rho}\rangle=0,\nonumber\\
&\frac{1}{2}|\hat{u}|^{2}+Re\langle i\xi\hat{\theta}|\hat{u}\rangle+Re\langle i\xi\hat{\rho}|\hat{u}\rangle+|\hat{u}|^{2}+|\xi|^{2}|\hat{u}|^{2}
-Re\langle\hat{b}|\hat{u}\rangle=0,\nonumber\\
&\frac{1}{2}|\hat{\theta}|^{2}+Re\langle i\xi\hat{u}|\hat{\theta}\rangle+3|\hat{\theta}|^{2}+|\xi|^{2}|\hat{\theta}|^{2}-Re\langle\sqrt{6}\hat{\omega}|\hat{\theta}\rangle=0.\nonumber
\end{align}
Then, combining these estimates, using the coercivity of $-\mathcal{L}$ and the Cauchy-Schwarz inequality, we show
\begin{align}
\frac{1}{2}\partial_{t}&\Big(\|\hat{f}\|^{2}_{L^{2}_{v}}+|\hat{\rho}|^{2}+|\hat{u}|^{2}+|\hat{\theta}|^{2}\Big)+\lambda|\{\mathbf{I-P}\}\hat{f}|^{2}_{\nu}\nonumber\\
&\quad+|\hat{u}-\hat{b}|^{2}+|\sqrt{2}\hat{\omega}-\sqrt{3}\hat{\theta}|^{2}+|\xi|^{2}|\hat{u}|^{2}+|\xi|^{2}|\hat{\theta}|^{2}\nonumber\\
&\leq C\Big(\|\hat{G}\|^{2}+\|\nu^{-\frac{1}{2}}\hat{\varphi}\|^{2}\Big).\label{v3.6}
\end{align}

Next, we consider the estimates on $a,b,\omega$. Similar to   \eqref{v2.13}-\eqref{vx2.15}, for the system \eqref{v3.1},  one has
\begin{align}
&\partial_{t}a+\dv b=0, \nonumber \\
&\partial_{t}b_{i}+\partial_{i}a +\frac{2}{\sqrt{6}}\partial_{i}\omega+\sum_{j=1}^{3}\partial_{x_{j}}\Gamma_{i,j}(\{\mathbf{I}-\mathbf{P}\}f)
=u_{i}-b_{i},\nonumber  \\
&\partial_{t}\omega+\sqrt{2}(\sqrt{2}\omega-\sqrt{3}\theta)+\frac{2}{\sqrt{6}}\dv b
+\sum_{i=1}^{3}\partial_{x_{i}}Q_{i}(\{\mathbf{I}-\mathbf{P}\}f)
=0,\nonumber\\
&\partial_{j}b_{i}+\partial_{i}b_{j}
-\frac{2}{\sqrt{6}}\delta_{ij}
\Big(\frac{2}{\sqrt{6}}\dv b+\sum_{i=1}^{3}\partial_{x_{i}}Q_{i}(\{\mathbf{I}-\mathbf{P}\}f)
\Big)\nonumber\\
&\quad\,\,\,\,\,=-\partial_{t}\Gamma_{i,j}\{\mathbf{I}-\mathbf{P}\}f +\Gamma_{i,j}(l+S_{f}), \nonumber\\
&\frac{5}{3}\partial_{i}\omega-\frac{2}{\sqrt{6}}\sum_{j=1}^{3}\partial_{x_{j}}\Gamma_{i,j}(\{\mathbf{I}-\mathbf{P}\}f)
=-\partial_{t}Q_{i}\big(\{\mathbf{I}-\mathbf{P}\}f\big)+Q_{i}(l+S_{f}),\nonumber
\end{align}
where  $l$  is still expressed as
$$l=-v\cdot \nabla_{x}\{\mathbf{I}-\mathbf{P}\}f+\mathcal{L}\{\mathbf{I}-\mathbf{P}\}f.$$
In the same way, taking the Fourier transform in $x$, we obtain
\begin{align}
&\partial_{t}\hat{a}+i\xi\cdot \hat{b}=0, \nonumber \\
&\partial_{t}\hat{b}_{i}+i\xi_{i}\hat{a }+\frac{2}{\sqrt{6}}i\xi_{i}\hat{\omega}+\sum_{j=1}^{3}i\xi_{j}\Gamma_{i,j}(\{\mathbf{I}-\mathbf{P}\}\hat{f})
=\hat{u}_{i}-\hat{b}_{i},\nonumber  \\
&\partial_{t}\hat{\omega}+\sqrt{2}(\sqrt{2}\hat{\omega}-\sqrt{3}\hat{\theta})+\frac{2}{\sqrt{6}}i\xi\cdot \hat{b}
+\sum_{i=1}^{3}i\xi_{i} Q_{i}(\{\mathbf{I}-\mathbf{P}\}\hat{f})
=0,\nonumber\\
&i\xi_{j}\hat{b}_{i}+i\xi_{i}\hat{b}_{j}
-\frac{2}{\sqrt{6}}\delta_{ij}
\Big(\frac{2}{\sqrt{6}}i\xi\cdot \hat{b}+\sum_{i=1}^{3}i\xi_{i}Q_{i}(\{\mathbf{I}-\mathbf{P}\}\hat{f})
\Big)\nonumber\\
&\quad\,\,\,\,\,=-\partial_{t}\Gamma_{i,j}\{\mathbf{I}-\mathbf{P}\}\hat{f} +\Gamma_{i,j}(\hat{l}+\hat{S}_{f}), \nonumber\\
&\frac{5}{3}i\xi_{i}\hat{\omega}-\frac{2}{\sqrt{6}}\sum_{j=1}^{3}i\xi_{j}\Gamma_{i,j}(\{\mathbf{I}-\mathbf{P}\}\hat{f})
=-\partial_{t}Q_{i}\big(\{\mathbf{I}-\mathbf{P}\}\hat{f}\big)+Q_{i}(\hat{l}+\hat{S}_{f}).\nonumber
\end{align}
By adopting  the similar calculation method as in Proposition \ref{vl2.4}, we can conclude the following  inequalities:
\begin{align}
\partial_{t}&Re\sum_{i,j}\langle i\xi_{i}\hat{b}_{j}+i\xi_{j}\hat{b}_{i}|\Gamma_{i,j}(\{\mathbf{I}-\mathbf{P}\}\hat{f})\rangle
+\lambda \Big(|\xi|^{2}|\hat{b}|^{2}+|\xi\cdot\hat{b}|^{2}\Big)\nonumber\\
&\leq\varepsilon |\xi|^{2}\big(|\hat{a}|^{2}+|\hat{\omega}|^{2}\big)+C(1+|\xi|^{2})\|\{\mathbf{I}-\mathbf{P}\}\hat{f}\|^{2}_{L^{2}_{v}}
+C\big(|\hat{u}-\hat{b}|^{2}+\|\hat{G}\|^{2}+\|\nu^{-\frac{1}{2}}\hat{\varphi}\|^{2}\big),\nonumber\\
\partial_{t}&Re\sum_{i}\langle i\xi_{i}\hat{\omega}|Q_{i}(\{\mathbf{I}-\mathbf{P}\}\hat{f})\rangle
+|\xi|^{2}|\hat{\omega}|^{2}\nonumber\\
&\leq\varepsilon |\xi|^{2}|\hat{b}|^{2}+C(1+|\xi|^{2})\|\{\mathbf{I}-\mathbf{P}\}\hat{f}\|^{2}_{L^{2}_{v}}
+C\big(|\sqrt{2}\hat{\omega}-\sqrt{3}\hat{\theta}|^{2}+\|\hat{G}\|^{2}+\|\nu^{-\frac{1}{2}}\hat{\varphi}\|^{2}\big),\nonumber\\
\partial_{t}&Re\langle \hat{a}|i\frac{\sqrt{6}}{5}\sum_{j=1}^{3}\xi_{j}Q_{j}(\{\mathbf{I}-\mathbf{P}\}\hat{f})-i\xi\cdot \hat{b}\rangle
+\frac{3}{4} |\xi|^{2}|\hat{a}|^{2}\nonumber\\
&\leq\frac{5}{4}|\xi\cdot \hat{b}|^{2}+C(1+|\xi|^{2})\|\{\mathbf{I}-\mathbf{P}\}\hat{f}\|^{2}_{L^{2}_{v}}
+C\big(|\hat{u}-\hat{b}|^{2}+\|\hat{G}\|^{2}+\|\nu^{-\frac{1}{2}}\hat{\varphi}\|^{2}\big),\nonumber\\
\partial_{t}&Re\langle \hat{u}|i\xi\hat{\rho}\rangle+\frac{3}{4}|\xi|^{2}|\hat{\rho}|^{2}\leq C |\xi|^{2}\big(|\hat{\theta}|^{2}+|\hat{u}|^{2}\big)+C|\hat{u}-\hat{b}|^{2}.\nonumber
\end{align}
Choosing $\kappa_{1}$  small sufficiently, and setting $\mathcal{\tilde{E}}(\hat{f})$ as
\begin{align}
\mathcal{\tilde{E}}(\hat{f}):=&\frac{1}{1+|\xi|^{2}}\Big\{\sum_{i,j}\langle i\xi_{i}\hat{b}_{j}+i\xi_{j}\hat{b}_{i}|\Gamma_{i,j}(\{\mathbf{I}-\mathbf{P}\}\hat{f})\rangle\nonumber\\
&+\sum_{i}\langle i\xi_{i}\hat{\omega}|Q_{i}(\{\mathbf{I}-\mathbf{P}\}\hat{f})\rangle
+\kappa_{1}\langle \hat{a}|i\frac{\sqrt{6}}{5}\sum_{j=1}^{3}\xi_{j}Q_{j}(\{\mathbf{I}-\mathbf{P}\}\hat{f})-i\xi\cdot \hat{b}\rangle\Big\},\nonumber
\end{align}
one has
\begin{align}
\partial_{t}&Re \mathcal{\tilde{E}}(\hat{f})
+\frac{\lambda |\xi|^{2}}{1+|\xi|^{2}} \big(|\hat{a}|^{2}+|\hat{b}|^{2}+|\hat{\omega}|^{2}\big)\nonumber\\
&\leq C\big(\|\{\mathbf{I}-\mathbf{P}\}\hat{f}\|^{2}_{L^{2}_{v}}
+|\hat{u}-\hat{b}|^{2}+|\sqrt{2}\hat{\omega}-\sqrt{3}\hat{\theta}|^{2}
\big)+C\big(\|\hat{G}\|^{2}+\|\nu^{-\frac{1}{2}}\hat{\varphi}\|^{2}\big). \nonumber
\end{align}
Similarly, choosing $\kappa_{2}$  small sufficiently, and setting $\mathcal{\tilde{E}}(\hat{f},\hat{\rho},\hat{u},\hat{\theta})$ as
\begin{align}
\mathcal{\tilde{E}}(\hat{f},\hat{\rho},\hat{u},\hat{\theta}):=\mathcal{\tilde{E}}(\hat{f})+\kappa_{2}\frac{1}{1+|\xi|^{2}}\langle \hat{u}|i\xi\hat{\rho}\rangle,\nonumber
\end{align}
then we have
\begin{align}
\partial_{t}&Re \mathcal{\tilde{E}}(\hat{f},\hat{\rho},\hat{u},\hat{\theta})
+\frac{\lambda |\xi|^{2}}{1+|\xi|^{2}} \big(|\hat{a}|^{2}+|\hat{b}|^{2}+|\hat{\omega}|^{2}+|\hat{\rho}|^{2}+|\hat{u}|^{2}+|\hat{\theta}|^{2}\big)\nonumber\\
&\leq C\big(\|\{\mathbf{I}-\mathbf{P}\}\hat{f}\|^{2}_{L^{2}_{v}}
+|\hat{u}-\hat{b}|^{2}+|\sqrt{2}\hat{\omega}-\sqrt{3}\hat{\theta}|^{2}
\big)\nonumber\\
&\quad+C\big(\|\hat{G}\|^{2}+\|\nu^{-\frac{1}{2}}\hat{\varphi}\|^{2}\big).\label{v3.7}
\end{align}
Now, we define the functional $\mathcal{E}_{\mathcal{F}}(\hat{f},\hat{\rho},\hat{u},\hat{\theta})$ by
$$\mathcal{E}_{\mathcal{F}}(\hat{f},\hat{\rho},\hat{u},\hat{\theta}):=\|\hat{f}\|^{2}_{L^{2}_{v}}+|\hat{\rho}|^{2}+|\hat{u}|^{2}+|\hat{\theta}|^{2}
+\kappa_3 Re \mathcal{\tilde{E}}(\hat{f},\hat{\rho},\hat{u},\hat{\theta}), $$
where a small constant $\kappa_3 > 0$ is chosen  such that
\begin{align}
\mathcal{E}_{\mathcal{F}}(\hat{f},\hat{\rho},\hat{u},\hat{\theta})
&\thicksim  \|\hat{f}\|^{2}_{L^{2}_{v}}+|\hat{\rho}|^{2}+|\hat{u}|^{2}+|\hat{\theta}|^{2}\nonumber\\
&\thicksim \|\{\mathbf{I}-\mathbf{P}\}\hat{f}\|^{2}_{L^{2}_{v}}+|\hat{a}|^{2}+|\hat{b}|^{2}+|\hat{\omega}|^{2}+|\hat{\rho}|^{2}+|\hat{u}|^{2}+|\hat{\theta}|^{2}.\nonumber
\end{align}
Finally, the linear combination $\eqref{v3.6}+\kappa_3 \times \eqref{v3.7}$ gives
\begin{align}
\partial_{t}&\mathcal{E}_{\mathcal{F}}(\hat{f},\hat{\rho},\hat{u},\hat{\theta})
+\lambda\big(\|\{\mathbf{I}-\mathbf{P}\}\hat{f}\|^{2}_{L^{2}_{v}}
+|\hat{u}-\hat{b}|^{2}+|\sqrt{2}\hat{\omega}-\sqrt{3}\hat{\theta}|^{2}
\big)\nonumber\\
&\quad+\frac{\lambda |\xi|^{2}}{1+|\xi|^{2}} \big(|\hat{a}|^{2}+|\hat{b}|^{2}+|\hat{\omega}|^{2}+|\hat{\rho}|^{2}+|\hat{u}|^{2}+|\hat{\theta}|^{2}\big)\nonumber\\
&\leq
C\big(\|\hat{G}\|^{2}+\|\nu^{-\frac{1}{2}}\hat{\varphi}\|^{2}\big),\nonumber
\end{align}
which further implies
\begin{align}
\partial_{t}\mathcal{E}_{\mathcal{F}}(\hat{f},\hat{\rho},\hat{u},\hat{\theta})
+\frac{\lambda |\xi|^{2}}{1+|\xi|^{2}}\mathcal{E}_{\mathcal{F}}(\hat{f},\hat{\rho},\hat{u},\hat{\theta})
\leq C\big(\|\hat{G}\|^{2}+\|\nu^{-\frac{1}{2}}\hat{\varphi}\|^{2}\big).\nonumber
\end{align}
With  the help of Gronwall's inequality, we have
\begin{align}
\mathcal{E}_{\mathcal{F}}(\hat{f},\hat{\rho},\hat{u},\hat{\theta})
\leq e^{-\frac{\lambda |\xi|^{2}}{1+|\xi|^{2}}}\mathcal{E}_{\mathcal{F}}(\hat{f},\hat{\rho},\hat{u},\hat{\theta})
+\int_{0}^{t}e^{-\frac{\lambda |\xi|^{2}}{1+|\xi|^{2}}(t-\tau)}\big(\|\hat{G}\|^{2}+\|\nu^{-\frac{1}{2}}\hat{\varphi}\|^{2}\big)\, {\rm d}\tau. \nonumber
\end{align}
In order to obtain the desired time-decay estimates \eqref{v3.3} and \eqref{v3.4}, the same proof can be adopted  as in
\cite[Theorem 3.1]{DFT}, and  we omit the details.
\end{proof}

\begin{proof}[Proof of the large time behavior in Theorem \ref{vt1.1}]
By the definitions of $\mathcal{E}(t),\mathcal{D}(t)$ in the previous section, it follows that
\begin{align}
\mathcal{E}(t)&\leq C\Big(\|\{\mathbf{I}-\mathbf{P}\}f\|^{2}_{H^{4}}+\|(a,b,\omega)\|^{2}_{H^{4}}+\|(\rho,u,\theta)\|^{2}_{H^{4}}\Big)\nonumber\\
&\leq C\Big(\mathcal{D}(t)+\|f\|^{2}_{L^{2}}+\|(\rho,u,\theta)\|^{2}_{L^{2}}\Big).\label{v3.8}
\end{align}
From \eqref{v2.29}, one obtains
\begin{align}
\frac{\rm d}{{\rm d}t}\mathcal{E}(t) +\lambda \mathcal{D}(t)\leq 0,\nonumber
\end{align}
which, together with \eqref{v3.8}, yields
$$\frac{\rm d}{{\rm d}t}\mathcal{E}(t) +\lambda \mathcal{E}(t)\leq C\Big(\|f\|^{2}_{L^{2}}+\|(\rho,u,\theta)\|^{2}_{L^{2}}\Big).$$
According to Gronwall inequality, it follows that
\begin{align}
\mathcal{E}(t)\leq e^{-\lambda t}\mathcal{E}(0)+C\int_{0}^{t}e^{-\lambda (t-s)}\big(\|f(s)\|^{2}_{L^{2}}+\|(\rho(s),u(s),\theta(s))\|^{2}_{L^{2}}\big)\, {\rm d}s.\label{v3.9}
\end{align}
Next, the system  \eqref{v1.5}-\eqref{vx1.7} can be written as
$$\big(f(t),\rho(t),u(t),\theta(t)\big)=E_{t}(f_{0},\rho_{0},u_{0},\theta_{0})+\int_{0}^{t}E_{t-s}\big(S_{f}(s),S_{\rho}(s),S_{u}(s),S_{\theta}(s)\big)\, {\rm d}s,$$
with
\begin{align}
S_{\rho}=&-\dv(\rho u), \nonumber\\
S_{f}=&-u\cdot\nabla_{v}f+\frac{1}{2}u\cdot v f+\theta M^{-\frac{1}{2}}\dv_{v}\Big(M^{\frac{1}{2}}\big(\nabla_{v}f-\frac{v}{2}f\big)\Big),\nonumber\\
S_{u}=&-u\cdot\nabla u+\frac{\rho-\theta}{1+\rho}\nabla \rho+\frac{\rho}{1+\rho}(u-b)-\frac{1}{1+\rho}a u-\frac{\rho}{1+\rho}\Delta u ,\nonumber\\
S_{\theta}=&-u\cdot \nabla \theta -\theta\dv u+\frac{\sqrt{3}\rho}{1+\rho}\big(\sqrt{3}\theta-\sqrt{2}\omega\big)\nonumber\\
&+\frac{1}{1+\rho}\Big((1+a)|u|^{2}-3a \theta -2u\cdot b- \rho\Delta \theta\Big),   \nonumber
\end{align}
where $S_{f}$ can be decomposed as
\begin{align}
S_{f}
:=\nabla_{v}\cdot G-\frac{v}{2}v\cdot G + \varphi +au\cdot v M^{\frac{1}{2}}-u\cdot b M^{\frac{1}{2}} +a\theta (|v|^{2}-3)M^{\frac{1}{2}} \nonumber
\end{align}
with
\begin{align}
G:=-u\{\mathbf{I-P_{0}-P_{1}}\}f,\quad \varphi:=\theta\cdot \{I-P_{2}\}M^{-\frac{1}{2}}\dv_{v}\Big(M^{\frac{1}{2}}\big(\nabla_{v}f-\frac{v}{2}f\big)\Big).\nonumber
\end{align}
Therefore, $\big(f(t),\rho(t),u(t),\theta(t)\big)$ can be rewritten as the sum of six terms
\begin{align}
\big(f(t),\rho(t)&,u(t),\theta(t)\big)\nonumber\\
=&E_{t}(f_{0},\rho_{0},u_{0},\theta_{0})
+\int_{0}^{t}E_{t-s}\big(\nabla_{v}\cdot G-\frac{v}{2}v\cdot G + \varphi,0,0,0\big)\, {\rm d}s\nonumber\\
&+\int_{0}^{t}E_{t-s}\big(au\cdot v M^{\frac{1}{2}}-u\cdot b M^{\frac{1}{2}} +a\theta (|v|^{2}-3)M^{\frac{1}{2}},0,0,0\big)\, {\rm d}s\nonumber\\
&+\int_{0}^{t}E_{t-s}\big(0,S_{\rho}(s),0,0\big)\, {\rm d}s
+\int_{0}^{t}E_{t-s}\big(0,0,S_{u}(s),0\big)\, {\rm d}s\nonumber\\
&+\int_{0}^{t}E_{t-s}\big(0,0,0,S_{\theta}(s)\big)\, {\rm d}s\nonumber\\
=&U_{1}+U_{2}+U_{3}+U_{4}+U_{5}+U_{6}.\nonumber
\end{align}

Applying directly \eqref{v3.3} to $U_{1}$, one has
$$\|U_{1}(t)\|_{L^{2}}\leq C(1+t)^{-\frac{3}{4}}\|(f_{0},\rho_{0},u_{0},\theta_{0})\|_{\mathcal{Z}^{1}\cap L^{2}}.$$

With the help of H\"{o}lder and Sobolev inequalities, using \eqref{v3.4} to $U_{2}$, we deduce
\begin{align}
\|U_{2}(t)\|^{2}_{L^{2}}&\leq C\int_{0}^{t}(1+t-s)^{-\frac{3}{2}}
\Big(\|u\cdot\{\mathbf{I-P_{0}-P_{1}}\}f\|^{2}_{\mathcal{Z}_{1}\cap L^{2}}\nonumber\\
&\quad+\|\nu^{-\frac{1}{2}}\theta\cdot \{I-P_{2}\}M^{-\frac{1}{2}}\dv_{v}\big(M^{\frac{1}{2}}(\nabla_{v}f-\frac{v}{2}f)\big)\|\Big)\, {\rm d}s\nonumber\\
&\leq C\int_{0}^{t}(1+t-s)^{-\frac{3}{2}}\mathcal{E}^{2}(s)\, {\rm d}s+ C\int_{0}^{t}(1+t-s)^{-\frac{3}{2}}\|\theta\|^{2}_{H^{4}}\|\{I-P\}f\|^{2}_{\nu}\, {\rm d}s\nonumber\\
&\leq C\int_{0}^{t}(1+t-s)^{-\frac{3}{2}}\mathcal{E}^{2}(s)\, {\rm d}s+ C\int_{0}^{t}(1+t-s)^{-\frac{3}{2}}\mathcal{E}(s)\mathcal{D}(s)\, {\rm d}s. \nonumber
\end{align}
Similarly, for $U_{i}\,(3\leq i\leq6)$, by means of  H\"{o}lder and Sobolev inequalities, we can apply \eqref{v3.3} to them to compute
\begin{align}
\|U_{3}(t)\|_{L^{2}}&\leq C\int_{0}^{t}(1+t-s)^{-\frac{3}{4}}\|\big(au\cdot v M^{\frac{1}{2}},u\cdot b M^{\frac{1}{2}},a\theta (|v|^{2}-3)M^{\frac{1}{2}}\big)\|_{\mathcal{Z}_{1}\cap L^{2}}\, {\rm d}s \nonumber\\
&\leq C\int_{0}^{t}(1+t-s)^{-\frac{3}{4}}\mathcal{E}(s)\, {\rm d}s,\nonumber\\
\|U_{4}(t)\|_{L^{2}}&\leq C\int_{0}^{t}(1+t-s)^{-\frac{3}{4}}\|(\dv u \rho, u\cdot\nabla \rho )\|_{L^{1}\cap L^{2}}\, {\rm d}s\nonumber\\
&\leq C\int_{0}^{t}(1+t-s)^{-\frac{3}{4}}\mathcal{E}(s)\, {\rm d}s,\nonumber\\
|U_{5}(t)\|_{L^{2}}&\leq C\int_{0}^{t}(1+t-s)^{-\frac{3}{4}}\|S_{u}(s)\|_{L^{1}\cap L^{2}}\, {\rm d}s\nonumber\\
&\leq C\int_{0}^{t}(1+t-s)^{-\frac{3}{4}}\mathcal{E}(s)\, {\rm d}s,\nonumber\\
|U_{6}(t)\|_{L^{2}}&\leq C\int_{0}^{t}(1+t-s)^{-\frac{3}{4}}\|S_{\theta}(s)\|_{L^{1}\cap L^{2}}\, {\rm d}s\nonumber\\
&\leq C\int_{0}^{t}(1+t-s)^{-\frac{3}{4}}\mathcal{E}(s)\, {\rm d}s.\nonumber
\end{align}

Therefore, it follows that
\begin{align}
&\|\big(f(t),\rho(t),u(t),\theta(t)\big)\|^{2}_{L^{2}}\leq 2\sum_{i=1}^{6}\|U_{i}\|^{2}_{L^{2}}\nonumber\\
&\,\,\leq C(1+t)^{-\frac{3}{2}}\|(f_{0},\rho_{0},u_{0},\theta_{0})\|^{2}_{\mathcal{Z}^{1}\cap L^{2}}
+C\int_{0}^{t}(1+t-s)^{-\frac{3}{2}}\mathcal{E}^{2}(s)\, {\rm d}s\nonumber\\
&\,\,\quad+ C\int_{0}^{t}(1+t-s)^{-\frac{3}{2}}\mathcal{E}(s)\mathcal{D}(s)\, {\rm d}s
 +C\Big(\int_{0}^{t}(1+t-s)^{-\frac{3}{4}}\mathcal{E}(s)\, {\rm d}s\Big)^{2}.\label{v3.10}
\end{align}

Define
\begin{align}
\mathcal{E}_{\infty}(t):=\sup_{0\leq s\leq t}(1+s)^{\frac{3}{2}}\mathcal{E}(s).\label{v3.11}
\end{align}
Using \eqref{v3.10} and that $\mathcal{E}(t),\mathcal{E}_{\infty}(t)$  are non-increasing in time, with the aid of Lemma \ref{vl3.2},
we obtain
\begin{align}
\int_{0}^{t}&(1+t-s)^{-\frac{3}{4}}\mathcal{E}(s)\, {\rm d}s \nonumber\\
&= \int_{0}^{t}(1+t-s)^{-\frac{3}{4}}
\Big(\mathcal{E}(s)\Big)^{\frac{2}{3}+\gamma}\Big(\mathcal{E}(s)\Big)^{\frac{1}{3}-\gamma}\, {\rm d}s\nonumber\\
&\leq C\Big(\mathcal{E}_{\infty}(t)\Big)^{\frac{2}{3}+\gamma}\Big(\mathcal{E}(0)\Big)^{\frac{1}{3}-\gamma}
\int_{0}^{t}(1+t-s)^{-\frac{3}{4}}(1+s)^{-\frac{3}{2}(\frac{2}{3}+\gamma)}\, {\rm d}s\nonumber\\
&\leq C(1+t)^{-\frac{3}{4}}\Big(\mathcal{E}_{\infty}(t)\Big)^{\frac{2}{3}+\gamma}\Big(\mathcal{E}(0)\Big)^{\frac{1}{3}-\gamma},\nonumber\\
\int_{0}^{t}&(1+t-s)^{-\frac{3}{2}}\big(\mathcal{E}(s)\big)^{2}\, {\rm d}s \nonumber\\
&\leq
\mathcal{E}_{\infty}(t)\mathcal{E}(0)\int_{0}^{t}(1+t-s)^{-\frac{3}{2}}(1+s)^{-\frac{3}{2}}\, {\rm d}s\nonumber\\
&\leq C(1+t)^{-\frac{3}{2}}\mathcal{E}_{\infty}(t)\mathcal{E}(0),\nonumber\\
\int_{0}^{t}&(1+t-s)^{-\frac{3}{2}}\mathcal{E}(s)\mathcal{D}(s)\, {\rm d}s \nonumber\\
&\leq \mathcal{E}_{\infty}(t)\int_{0}^{t}(1+t-s)^{-\frac{3}{2}}(1+s)^{-\frac{3}{2}}\mathcal{D}(s)\, {\rm d}s\nonumber\\
&\leq \mathcal{E}_{\infty}(t)(1+t)^{-\frac{3}{2}}\int_{0}^{t}\mathcal{D}(s)\, {\rm d}s\nonumber\\
&\leq C(1+t)^{-\frac{3}{2}}\mathcal{E}_{\infty}(t)\mathcal{E}(0),\nonumber
\end{align}
with $0< \gamma < \frac{1}{3}$.
From this, $\|(f(t),\rho(t),u(t),\theta(t))\|_{L^{2}}$ can be further estimated as
\begin{align}
\|\big(f(t),\rho(t),u(t),\theta(t)\big)\|^{2}_{L^{2}}
&\leq C(1+t)^{-\frac{3}{2}} \Big\{\|(f_{0},\rho_{0},u_{0},\theta_{0})\|_{\mathcal{Z}^{1}\cap L^{2}}\nonumber\\
&\quad+\mathcal{E}_{\infty}(t)\mathcal{E}(0)
+\Big(\mathcal{E}_{\infty}(t)\Big)^{\frac{4}{3}+2\gamma}\Big(\mathcal{E}(0)\Big)^{\frac{2}{3}-2\gamma}\Big\}.\nonumber
\end{align}
Substituting this inequality into the right hand side of \eqref{v3.9}, and multiplying the resulting inequality by $(1+t)^{\frac{3}{2}}$,   we obtain
\begin{align}
(1+t)^{\frac{3}{2}}\mathcal{E}(t)&\leq e^{-\lambda t}(1+t)^{\frac{3}{2}}\mathcal{E}(0)+C\int_{0}^{t}e^{-\lambda (t-s)}\Big(\|(f_{0},\rho_{0},u_{0},\theta_{0})\|_{\mathcal{Z}^{1}\cap L^{2}}\nonumber\\
&\qquad +\mathcal{E}_{\infty}(t)\mathcal{E}(0)
+\big(\mathcal{E}_{\infty}(t)\big)^{\frac{4}{3}+2\gamma}\big(\mathcal{E}(0)\big)^{\frac{2}{3}-2\gamma}\Big)\, {\rm d}s\nonumber\\
&\leq C\Big(\|(f_{0},\rho_{0},u_{0},\theta_{0})\|^{2}_{\mathcal{Z}^{1}\cap H^{4}} 
  +\mathcal{E}_{\infty}(t)\mathcal{E}(0)
+\Big(\mathcal{E}_{\infty}(t)\Big)^{\frac{4}{3}+2\gamma}\Big(\mathcal{E}(0)\Big)^{\frac{2}{3}-2\gamma}\Big).\nonumber
\end{align}
Thus, by the definition \eqref{v3.11}, we further obtain
\begin{align}
\mathcal{E}_{\infty}(t)\leq C\Big(\|(f_{0},\rho_{0},u_{0},\theta_{0})\|^{2}_{\mathcal{Z}^{1}\cap H^{4}}
+\mathcal{E}_{\infty}(t)\mathcal{E}(0)
+\big(\mathcal{E}_{\infty}(t)\big)^{\frac{4}{3}+2\gamma}\big(\mathcal{E}(0)\big)^{\frac{2}{3}-2\gamma}\Big).\nonumber
\end{align}
Since $\|(f_{0},\rho_{0},u_{0},\theta_{0})\|_{\mathcal{Z}^{1}\cap H^{4}}$ and $\mathcal{E}(0)\thicksim \|(f_{0},\rho_{0},u_{0},\theta_{0})\|^{2}_{H^{4}}$
are small enough, and $1<2(\frac{2}{3}+\gamma) <2$, one has
\begin{align}
y(t)\leq A(1+y(t))+C^{1-2(\frac{1}{2}-\gamma)}A^{2(\frac{1}{3}-\gamma)}y^{2}(t)\nonumber
\end{align}
for all $t\geq 0,$ with $y(t)=\mathcal{E}_{\infty}(t), A=C\|(f_{0},\rho_{0},u_{0},\theta_{0})\|^{2}_{\mathcal{Z}^{1}\cap H^{4}}$,
due to Lemma \ref{vl3.3}, which implies
$$\mathcal{E}_{\infty}(t)\leq 2A=2C\|(f_{0},\rho_{0},u_{0},\theta_{0})\|^{2}_{\mathcal{Z}^{1}\cap H^{4}},$$
namely,
$$\mathcal{E}(t)\leq C\|(f_{0},\rho_{0},u_{0},\theta_{0})\|^{2}_{\mathcal{Z}^{1}\cap H^{4}}(1+t)^{-\frac{3}{2}}.$$
The proof of Theorem \ref{vt1.1} is completed.

\end{proof}

\bigskip

\section{The periodic case}
In this section we deal with the spatial periodic domain $\Omega:=\mathbb{T}^{3}$.
A straightforward calculation can  deduce the conservation laws  as follows
\begin{align}
&\frac{\rm d}{{\rm d}t}\iint  F \, {\rm d}x  {\rm d}v=0, \qquad \frac{\rm d}{{\rm d}t}\bigg\{\int   nu\, {\rm d}x  +\iint  v F\, {\rm d}x  {\rm d}v\bigg\}=0,\nonumber\\
&\frac{\rm d}{{\rm d}t}\int  n \, {\rm d}x =0,\qquad \qquad\frac{\rm d}{{\rm d}t}\bigg\{\int   nE\, {\rm d}x  +\iint  \frac{|v|^{2}}{2} F\, {\rm d}x  {\rm d}v\bigg\}=0,
\nonumber
\end{align}
and by the assumption of Theorem \ref{vt1.2}, we obtain
\begin{align}
 &\int a\, {\rm d}x =0, \quad  \int \rho \, {\rm d}x =0, \quad \int \big(b+(1+\rho)u\big)\, {\rm d}x =0, \nonumber\\
  &\int (1+\rho)(\theta+\frac{1}{2}|u|^{2})+\frac{\sqrt{6}}{2}\omega \, {\rm d}x =0,\label{v4.1}
 \end{align}
for all $t\geq 0.$

Now, we give a brief proof of Theorem \ref{vt1.2}.

\begin{proof}[Proof of Theorem \ref{vt1.2}]
Here, we only give the proof of  the global a priori estimates.
It follows from  Poincar\'{e} inequality and the conservation laws \eqref{v4.1},
\begin{align}
\|a\|_{L^{2}}&\leq C\|\nabla a\|_{L^{2}}, \quad \|\rho\|_{L^{2}}\leq C\|\nabla \rho\|_{L^{2}}, \label{v4.2}\\
\|u+b\|_{L^{2}}&\leq \|b+u+\rho u\|_{L^{2}}+\|\rho u\|_{L^{2}}\nonumber\\
&\leq C \|\nabla (b+u+\rho u)\|_{L^{2}}+ \|u\|_{L^{\infty}}\|\rho\|_{L^{2}}\nonumber\\
&\leq C \|\nabla (b,u)\|_{L^{2}}+ C\|u\|_{H^{2}}\|\nabla \rho\|_{L^{2}}+C\|\rho\|_{H^{2}}\|\nabla u\|_{L^{2}},\label{v4.3}\\
\|\sqrt{6}/2\omega+\theta\|_{L^{2}}&\leq \Big\|(1+\rho)(\theta+\frac{1}{2}|u|^{2})+\frac{\sqrt{6}}{2}\omega\Big\|_{L^{2}}+\|\rho \theta\|_{L^{2}}+\|\rho |u|^{2}\|_{L^{2}}\nonumber\\
&\leq C(\|\theta\|_{H^{2}}+\|u\|^{2}_{H^{2}})\|(\rho,\nabla \rho)\|_{L^{2}}+C\|\nabla \omega\|_{L^{2}}\nonumber\\
&\quad+C(1+\|\rho\|_{H^{2}})(\|\nabla \theta\|_{L^{2}}+\|u\|_{H^{2}}\|\nabla u\|_{H^{2}}),\nonumber\\
\|\omega\|_{L^{2}}+\|\theta\|_{L^{2}}&\leq C\big(\|\sqrt{6}/2\omega+\theta\|_{L^{2}}+\|\sqrt{2}\omega-\sqrt{3}\theta\|_{L^{2}}\big).\label{vx4.3}
\end{align}
Let the energy functionals $\mathcal{E}_{1}(t)\,\text{and}\,\mathcal{E}_{2}(t)$ and the  corresponding dissipation rate functional
$\mathcal{D}_{1}(t)\,\text{and}\,\mathcal{D}_{2}(t)$ be defined in the same way as in the case  $\Omega:=\mathbb{R}^{3}$.
Similarly, we  conclude that
\begin{align}
&\frac{\rm d}{{\rm d}t}\mathcal{E}_{1}(t)+\lambda\mathcal{D}_{1}(t) \leq C(\mathcal{E}^{\frac{1}{2}}_{1}+\mathcal{E}^{2}_{1})\mathcal{D}_{1}(t),\label{v4.4}\\
&\frac{\rm d}{{\rm d}t}\mathcal{E}_{2}(t)+\lambda\mathcal{D}_{2}(t) \leq  C\mathcal{D}_{1}(t)
+C(\mathcal{E}_{1}+\mathcal{E}^{2}_{1})\mathcal{D}_{1}(t)
+C(\mathcal{E}^{\frac{1}{2}}_{1}+\mathcal{E}^{2}_{1})\mathcal{D}_{2}(t).\label{v4.5}
\end{align}
Define
$$\mathcal{D}_{\mathbb{T},1}(t) :=\mathcal{D}_{1}(t)+\tau_{3}(\|a\|^{2}_{L^{2}}+\|\rho\|^{2}_{L^{2}})+\tau_{4}\|b+u\|^{2}_{L^{2}}+\tau_{5}\|\sqrt{6}/2\omega+\theta\|_{L^{2}},$$
where $\tau_{3},\,\tau_{4}\,\text{and} \tau_{5}$ are sufficiently small. Notice
\begin{align}
\mathcal{D}_{\mathbb{T},1}(t)\sim \sum_{|\alpha|\leq 4}\|\{\mathbf{I}-\mathbf{P}\}\partial^{\alpha}f\|^{2}_{\nu}+\|(a,b,\omega,\rho,u,\theta)\|^{2}_{H^{4}}.\label{v4.6}
\end{align}
Combining \eqref{v4.2},\,\eqref{v4.3} and \eqref{v4.4} together, we conclude that
\begin{align}
\frac{\rm d}{{\rm d}t}\mathcal{E}_{1}(t)+\lambda\mathcal{D}_{\mathbb{T},1}(t) \leq C(\mathcal{E}^{\frac{1}{2}}_{1}+\mathcal{E}^{2}_{1})\mathcal{D}_{\mathbb{T},1}(t).\label{v4.7}
\end{align}
Define the functionals $\mathcal{E}(t)\,\text{and}\,\mathcal{D}(t)$ as
$$\mathcal{E}(t):=\mathcal{E}_{1}(t)+\tau_{6}\mathcal{E}_{2}(t),\quad \mathcal{D}(t):=\mathcal{D}_{\mathbb{T},1}(t)+\tau_{6}\mathcal{D}_{2}(t),$$
where $\tau_{6}$ is sufficiently small.
 \eqref{v4.5} together with \eqref{v4.7} yields that
$$\frac{\rm d}{{\rm d}t}\mathcal{E}(t) +\lambda \mathcal{D}(t) \leq C(\mathcal{E}^{\frac{1}{2}}(t)+\mathcal{E}^{2}(t))\mathcal{D}(t).$$
Based on  $\mathcal{E}(t)$  small enough and uniformly in time, and $\mathcal{E}(t)\leq C \mathcal{D}(t)$,
 it follows that
$$\frac{\rm d}{{\rm d}t}\mathcal{E}(t) +\lambda \mathcal{E}(t) \leq 0$$
for all $t\geq 0.$
By Gronwall's inequality, it is easy to obtain the exponential decay, and  we finish the proof of Theorem \ref{vt1.2}.
\end{proof}

\bigskip

\section*{Acknowledgments}
Y. Mu was partially supported  by NSFC
(Grant No.11701268), Natural Science Foundation of Jiangsu Province of China
(BK20171040) and Chinese Postdoctoral Science Foundation (2018M642277).
D. Wang's research was supported in part by  the NSF grants DMS-1312800 and DMS-1613213.

%

 \bigskip

\end{document}